\def\0{{\bf 0}}
\def\R{{\mathbb R}}
\def\Z{{\mathbb Z}}
\def\N{{\mathbb N}}
\theoremstyle{plain}
\newtheorem{theorem}{Theorem}[section]
\newtheorem{prop}[theorem]{Proposition}
\newtheorem{lem}[theorem]{Lemma}
\newtheorem{lemma}[theorem]{Lemma}
\theoremstyle{definition}
\newtheorem{definition}[theorem]{Definition}
\newtheorem*{theorem*}{Theorem}
\theoremstyle{remark}
\newtheorem{remark}[theorem]{Remark}
\newcommand{\Jim}[1]{{\color{DarkRed}#1}}
\newcommand{\abs}[1]{\left\vert#1\right\vert}
\DeclareMathOperator{\dv}{div}
\DeclareMathOperator{\curl}{curl}
\DeclareMathOperator{\supp}{supp}
\newcommand{\iny}{\ensuremath{\infty}}
\newcommand{\grad}{\ensuremath{\nabla}}
\newcommand{\prt}{\ensuremath{\partial}}
\newcommand{\brac}[1]{\ensuremath{\left[ #1 \right]}}
\newcommand{\pr}[1]{\ensuremath{\left( #1 \right) }}
\newcommand{\norm}[1]{\ensuremath{\left\Vert #1 \right\Vert}}
\newcommand{\smallnorm}[1]{\ensuremath{\Vert #1 \Vert}}
\newcommand{\Cal}[1]{\ensuremath{\mathcal{#1}}}
\newcommand{\diff}[2]{\frac{ d#1}{d#2}}
\newcommand{\al}{\alpha}
\newcommand{\stardot}{\Jim{\mathop{* \cdot}}}
\DeclareMathOperator{\PV}{p.v.} %
\begin{document}

\raggedbottom
% \reversemarginpar

\numberwithin{equation}{section}

%-------------------- "Style" Commands --------------------
%
%
\newcommand{\MarginNote}[1]{
    \marginpar{
        \begin{flushleft}
            \footnotesize #1
        \end{flushleft}
        }
    }
%\topmargin 0.125in
%
% Define a note-to-self, which remains in the TeX document, but
% produces no output. It will disrupt spacing, unfortunately,
% in the same way that MarginNote will.
%
\newcommand{\NoteToSelf}[1]{
    }

%
% Obsolete material, which should eventually simply be deleted.
%
\newcommand{\Obsolete}[1]{
    }

\newcommand{\Detail}[1]{
    \MarginNote{Detail}
    \skipline
    \hspace{+0.25in}\fbox{\parbox{4.25in}{\small #1}}
    \skipline
    }

\newcommand{\Comment}[1] {
    \skipline
    \hspace{+0.25in}\fbox{\parbox{4.25in}{\small \textbf{Comment}: #1}}
    \skipline
    }

%
% Commonly used expressions in this document.
%

%--- Integral over time and space
\newcommand{\IntTR}
    {\int_{t_0}^{t_1} \int_{\R^d}}

%--- Integral over all \R as -\iny to \iny
\newcommand{\IntAll}
    {\int_{-\iny}^\iny}

%--- Schwartz functions
\newcommand{\Schwartz}
    {\ensuremath \Cal{S}}

%--- Schwartz functions over R
\newcommand{\SchwartzR}
    {\ensuremath \Schwartz (\R)}

%--- Schwartz functions over R^d
\newcommand{\SchwartzRd}
    {\ensuremath \Schwartz (\R^d)}

%--- Schwartz-dual functions
\newcommand{\SchwartzDual}
    {\ensuremath \Cal{S}'}

%--- Schwartz-dual functions over R
\newcommand{\SchwartzRDual}
    {\ensuremath \Schwartz' (\R)}

%--- Schwartz-dual functions over R^d
\newcommand{\SchwartzRdDual}
    {\ensuremath \Schwartz' (\R^d)}

%--- Strange tilde-norm norm on H^s ---
\newcommand{\HSNorm}[1]
    {\norm{#1}_{H^s(\R^2)}}

%--- Strange tilde-norm norm on H^#2 ---
\newcommand{\HSNormA}[2]
    {\norm{#1}_{H^{#2}(\R^2)}}

%--- Holder, with an umlaut over the "o"
\newcommand{\Holder}
    {H\"{o}lder }

%--- Holder's, with an umlaut over the "o"
\newcommand{\Holders}
    {H\"{o}lder's }
    
%--- Holder in all caps, with an umlaut over the "o"    
\newcommand{\HolderAllCaps}
    {H\"{O}LDER }
    
\newcommand{\HolderZygmund}{H\"{o}lder-Zygmund }
    
%--- Poincare with the proper accent
\newcommand{\Poincare}
	{Poincar\'{e}\xspace}

%--- Poincare's with the proper accent
\newcommand{\Poincares}
	{Poincar\'{e}'s\xspace}
	
%--- Calderon with proper accent
\newcommand{\Calderon}
    {Calder\'{o}n\xspace}

%--- Gronwall's, with an umlaut over the "o"
\newcommand{\Gronwalls}
    {Gr\"{o}nwall's\xspace}

\newcommand{\sg}
    {\ensuremath{\sigma}}    
    
    \newcommand{\be}
    {\ensuremath{\beta}}

   \newcommand{\pib}
    {\ensuremath{\phi_{\beta}}} 
    
     \newcommand{\sib}
    {\ensuremath{\psi_{\beta}}} 
    
        \newcommand{\besw}
    {\ensuremath{ {\dot{B}}^0_{2,2}(w) }} 
    
           \newcommand{\beswhalf}
    {\ensuremath{ {\dot{B}}^{1 \slash 2}_{2,2}(w) }} 
    
         \newcommand{\besdwhalf}
    {\ensuremath{ {\dot{B}}^{1 \slash 2}_{2,2}(Dw) }} 
       
           \newcommand{\besdw}
    {\ensuremath{ {\dot{B}}^0_{2,2}(Dw) }}

%\begin{document}
\title
    [fluid equations in \HolderAllCaps and Uniformly Local Sobolev Spaces]
    {Existence of Solutions to fluid equations in \HolderAllCaps and Uniformly Local Sobolev Spaces}

%--- Information for first author
\author{David M. Ambrose}
% --- Address of record for the research reported here
\address{Department of Mathematics, Drexel University}
%--- Current address
\curraddr{}
\email{dma68@drexel.edu}
% \thanks{}

%--- Information for second author
\author{Elaine Cozzi}
% --- Address of record for the research reported here
\address{Department of Mathematics, Oregon State University}
%--- Current address
\curraddr{}
\email{cozzie@math.oregonstate.edu}
% \thanks{}

%--- Information for third author
\author{Daniel Erickson}
% --- Address of record for the research reported here
\address{Department of Mathematics, Oregon State University}
%--- Current address
\curraddr{}
\email{ericdani@math.oregonstate.edu}
% \thanks{}

%--- Information for fourth author
\author{James P. Kelliher}
% --- Address of record for the research reported here
\address{Department of Mathematics, University of California, Riverside}
%--- Current address
\curraddr{}
\email{kelliher@math.ucr.edu}
% \thanks{}

%    General info
\subjclass{Primary 76D05, 76C99} % ; Secondary }
\date{} %  and, in revised form, .}

% \dedicatory{}

\keywords{Fluid mechanics, Euler equations}

\begin{abstract}
We establish short-time existence of solutions to the surface quasi-geostrophic equation in both the \Holder spaces $C^r(\R^2)$ for $r>1$ and the uniformly local Sobolev spaces $H^s_{ul}(\R^2)$ for $s\geq 3$.  Using methods similar to those for the surface quasi-geostrophic equation, we also obtain short-time existence for the three-dimensional Euler equations in uniformly local Sobolev spaces.  
\end{abstract}

\maketitle
Last updated: \today

\tableofcontents

%%%%%%%%%%%%
%%%%%%%%%%%%%%%%%%%%  Introduction
%%%%%%%%%%%%%%%%%%%%%%%%%%%%%%%%%%%%%%%%%%%%%%%%%

\section{Introduction}

\subsection{Background}
We study non-decaying solutions of two fundamental models of fluid motion, the two-dimensional surface quasi-geostrophic equation ($SQG$) and the three-dimensional incompressible Euler equations $(E)$. Classically, these equations (without forcing) can be written
\begin{align*}
	(SQG)
		\qquad
	\begin{cases}
		\prt_t \theta + u \cdot \grad \theta  = 0
			&\text{in } [0, T] \times \R^2, \\
		u = \grad^\perp (-\Delta)^{-\frac{1}{2}} \theta
			&\text{in } [0, T] \times \R^2, \\
		\theta|_{t = 0} = \theta^0
			&\text{in } \R^2
	\end{cases}
\end{align*}
and, in velocity formulation,
\begin{align*}
	(E)
		\qquad
	\begin{cases}
		\prt_t u + u \cdot \grad u + \grad p = 0
			&\text{in } [0, T] \times \R^3, \\
		\dv u = 0
			&\text{in } [0, T] \times \R^3, \\
		u|_{t = 0} = u^0
			&\text{in } \R^3.
	\end{cases}
\end{align*}

In $(SQG)$, the scalar field $\theta$ is transported by the velocity field $u$, with $u$ recovered from $\theta$ via the constitutive law $u = \grad^\perp (-\Delta)^{-\frac{1}{2}} \theta$ (making $u$ divergence-free). In $(E)$, the velocity field $u$ is, in effect, transported by itself under the constraint that it remain divergence-free, which introduces the pressure gradient.

The parallels between these two equations become clearer when $(E)$ is written in vorticity form:
\begin{align*}
	(E_\omega)
		\qquad
	\begin{cases}
		\prt_t \omega + u \cdot \grad \omega = \omega \cdot \grad u
			&\text{in } [0, T] \times \R^3, \\
		u = K * \omega,
			&\text{in } [0, T] \times \R^3, \\
		\omega|_{t = 0} = \omega^0
			&\text{in } \R^3.
	\end{cases}
\end{align*}
Here, $\omega = \curl u$ is the vorticity, $K$ is the Biot-Savart kernel, and $u = K * \omega$ is the constitutive law. Rather than just being transported as $\theta$ is in $(SQG)$, the vorticity field is 
stretched as it is being transported. Moreover, though both constitutive laws, $(E_\omega)_2$, $(SQG)_2$, yield divergence-free vector fields, they differ sharply in that $u$ gains one more spatial derivative of regularity over that of $\omega$ for $(E_\omega)$, while it has the same spatial regularity as $\theta$ for $(SQG)$.

Each of $(SQG)$ and $(E)$ are well-posed when the data is sufficiently smooth and sufficiently decaying. Insufficient smoothness motivates various weak formulations of the equations, a long tradition in PDE. Such weak formulations leave the constitutive law alone or integrate it into the weak formulation, but generalize or weaken what it means for the PDE itself to hold (that is, $(SQG)_1$, $(E)_1$, or $(E_\omega)_1$). Studying PDEs when the data lacks sufficient decay has a shorter history, but focuses on extending or weakening the constitutive law. (Of course, both can be done at the same time.)

In this work, we study $(SQG)$ and $(E)$ for non-decaying, but sufficiently smooth solutions, which requires us to adapt the constitutive law while leaving the PDE itself unchanged. We will work with $(E)$ primarily in vorticity form, though will also use the velocity form, which requires us to obtain estimates on the pressure $p$. The constitutive law $u = K * \omega$ will enter (in adapted form) in the process of closing our estimates, as we shall see.

Our methodology for adapting the constitutive law follows that first employed by Serfati in \cite{Serfati1} for the 2D Euler equations. He obtained an identity by applying a cutoff function to the Biot-Savart kernel $K$ to separate the near-field and far-field effects of the convolution. The far-field term is then integrated by parts twice---when the PDE and constitutive law permit this, as they do for $(SQG)$ in 2D as well as $(E)$ in any dimension---which allows the integrated form of it to be controlled for non-decaying data. The resulting identity then forms, in effect, a replacement constitutive law. This can be seen clearly in the form of these identities in Lemmas \ref{L:SerfatiSQG} and \ref{L:SertatiID3DEuler}.

Even for decaying data, obtaining the existence of weak solutions to 3D Euler is beyond current technology, so we work with solutions having sufficient smoothness. We work, then, in H\"{o}lder-Zygmund spaces, which differ from \Holder spaces for integer indices---see Section \ref{S:Holder}---and in uniformly local Sobolev spaces $H^s_{ul}$ (see Section \ref{S:Hsul}). 

We prove existence for both ($SQG$) and ($E$) in $H^s_{ul}$ by applying the existence theory in H\"{o}lder-Zygmund spaces to construct an approximation sequence, developing bounds uniform with respect to the approximation parameter, and passing to the limit.

\subsection{Main results}
We state our main results in Theorems \ref{T:Main1} and \ref{T:Main2}, more completely stated in Theorems \ref{SQGHolder}, \ref{SQGUnifLocalSobolev}, and \ref{EulerUnifLocalSobolev}. See Sections \ref{S:Holder} and \ref{S:Hsul} for the definitions of the function spaces $C^r$, $\dot{C}^r$, and $H^s_{ul}$.

\begin{theorem}\label{T:Main1}
	Let $\theta^0 \in C^{r}(\R^2)$, $r \in (1, \infty)$,
	and let $u^0 \in C^r(\R^2)$ satisfy
$
u^0 =  \nabla^{\perp} (-\Delta)^{-1/2}  {\theta}^0 \text{  in }\dot{C}^r(\R^2).
$
There exists $T>0$ and a unique solution $(u, \theta)$ to $(SQG)$ with the constitutive law in the form
\begin{align*}
&u(t) = u^0
+(a\Phi) * \nabla^{\perp}(\theta(t) - \theta^0)
- \int_0^t (\nabla\nabla^\perp((1-a)\Phi)) \stardot (\theta u)
\end{align*}  
satisfying, for any $r' \in (0, r)$,
\begin{equation*}
\begin{split}
&\theta\in L^{\infty}(0,T; C^{r}(\R^2)) \cap Lip([0,T] ; C^{r-1}(\R^2)) \cap C([0,T]; C^{r'}(\R^2)),\\
&u \in L^{\infty}(0,T; C^{r}(\R^2)) \cap C([0,T]; C^{r'}(\R^2)).  
\end{split}
\end{equation*} 

If $\theta^0 \in H^s_{ul}(\R^2)$ and $u^0 \in H^s_{ul}(\R^2)$ for some $s \ge 3$ satisfy
$u^0 =  \nabla^{\perp} (-\Delta)^{-1/2} {\theta}^0$ in $\dot{C}^{\alpha}(\R^2)$,
where $\alpha > 1$ satisfies the embedding $H^s_{ul}(\R^2)\hookrightarrow {C}^{\alpha}(\R^2)$, then
\begin{equation*}
\begin{split}
&\theta \in L^{\infty}(0,T; H^s_{ul}(\R^2)) \cap Lip([0,T] ; H^{s-1}_{ul}(\R^2)),\\
&u \in L^{\infty}(0,T; H^s_{ul}(\R^2)). 
\end{split}
\end{equation*}    
\end{theorem}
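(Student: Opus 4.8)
The plan is to deduce the $H^s_{ul}$ statement from the \Holder statement already established in the first part of the theorem, by regularizing, obtaining bounds uniform in the regularization parameter, and passing to the limit. The first observation is that since $H^s_{ul}(\R^2) \hookrightarrow C^\alpha(\R^2)$ with $\alpha > 1$, the data $(\theta^0,u^0)$ satisfies the hypotheses of the first part, so the unique solution $(u,\theta)$ on $[0,T]$ already exists with $\theta,u \in L^\infty(0,T;C^\alpha)\cap C([0,T];C^{\alpha'})$; in particular $\|\nabla u\|_{L^\infty}$, $\|\nabla\theta\|_{L^\infty}$, $\|u\|_{L^\infty}$, $\|\theta\|_{L^\infty}$ are all bounded on $[0,T]$ by a constant depending only on $\|\theta^0\|_{C^\alpha}$ and $T$. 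It therefore remains to propagate $H^s_{ul}$ regularity along this solution, and the a priori estimate one aims for is \emph{linear} in the top-order norm precisely because these Lipschitz-type norms are already known.

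Next I would approximate the data: choose $\theta^0_n$ smooth and decaying (say Schwartz) with $\theta^0_n \to \theta^0$ in $C^{\alpha'}(\R^2)$ for some $\alpha' \in (1,\alpha)$, with $\|\theta^0_n\|_{C^\alpha} \le C\|\theta^0\|_{C^\alpha}$ and $\|\theta^0_n\|_{H^s_{ul}} \le C\|\theta^0\|_{H^s_{ul}}$, and set $u^0_n = \nabla^\perp(-\Delta)^{-1/2}\theta^0_n$. Because the \Holder existence time depends only on $\|\theta^0_n\|_{C^\alpha}$, the corresponding solutions $(u_n,\theta_n)$ exist on a common interval $[0,T]$; for such decaying, smooth data the solutions have enough decay and regularity (persistence of $H^s(\R^2)$ regularity for the transport equation) that the energy computations below are rigorous.

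The crux is the a priori bound on $\theta_n$ in $L^\infty(0,T;H^s_{ul})$, uniform in $n$. Covering $\R^2$ by unit balls with a subordinate partition of unity $\{\chi_j\}$ and differentiating $\|\chi_j\theta_n\|_{H^s}^2$ along $\partial_t\theta_n + u_n\cdot\nabla\theta_n = 0$, the commutator terms are handled by Kato--Ponce-type inequalities, giving
\begin{equation*}
\frac{d}{dt}\|\theta_n(t)\|_{H^s_{ul}} \lesssim \left(1+\|\nabla u_n(t)\|_{L^\infty}\right)\|\theta_n(t)\|_{H^s_{ul}} + \|\nabla\theta_n(t)\|_{L^\infty}\,\|u_n(t)\|_{H^s_{ul}}.
\end{equation*}
To close the loop I would control $\|u_n(t)\|_{H^s_{ul}}$ from the Serfati-type identity of Lemma \ref{L:SerfatiSQG}: the near-field term $(a\Phi)*\nabla^\perp(\theta_n(t)-\theta^0_n)$ is convolution against a compactly supported $L^1$ kernel, hence $\lesssim \sup_{[0,t]}\|\theta_n\|_{H^s_{ul}}$, while the far-field term $\int_0^t(\nabla\nabla^\perp((1-a)\Phi))\stardot(\theta_n u_n)$ is bounded, via the mapping properties from that lemma together with the algebra property of $H^s_{ul}$ ($s\ge 3$ on $\R^2$) and the already-known $L^\infty$ bounds on $\theta_n,u_n$, by $C\int_0^t(\|\theta_n\|_{H^s_{ul}}+\|u_n\|_{H^s_{ul}})$. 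A \Gronwalls argument then gives $\|u_n(t)\|_{H^s_{ul}} \lesssim \|u^0_n\|_{H^s_{ul}} + \sup_{[0,t]}\|\theta_n\|_{H^s_{ul}}$ on $[0,T]$; feeding this back into the displayed inequality, whose coefficients $1+\|\nabla u_n\|_{L^\infty}$ and $\|\nabla\theta_n\|_{L^\infty}$ are bounded on $[0,T]$ uniformly in $n$ by the \Holder theory, \Gronwalls inequality yields $\sup_{[0,T]}\|\theta_n\|_{H^s_{ul}} \le C$ with $C$ depending only on $\|\theta^0\|_{H^s_{ul}}$, $\|u^0\|_{H^s_{ul}}$ and $T$.

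Finally I would pass to the limit. The uniform bound gives a subsequence with $\theta_n \rightharpoonup^* \theta_\infty$ in $L^\infty(0,T;H^s_{ul})$, and since $\partial_t\theta_n = -u_n\cdot\nabla\theta_n$ is bounded in $L^\infty(0,T;H^{s-1}_{ul})$ by the tame product estimate (using $s-1>1$), one obtains $\theta_\infty \in Lip([0,T];H^{s-1}_{ul})$, with $u_\infty$ bounded in $L^\infty(0,T;H^s_{ul})$ through the constitutive law. Because $\theta^0_n \to \theta^0$ in $C^{\alpha'}$, the stability estimate underlying uniqueness in the first part forces $\theta_n \to \theta$, so $\theta_\infty = \theta$; lower semicontinuity of the $H^s_{ul}$ norm under weak-$*$ limits then yields $\theta,u \in L^\infty(0,T;H^s_{ul})$, and $\partial_t\theta = -u\cdot\nabla\theta \in L^\infty(0,T;H^{s-1}_{ul})$ gives the Lipschitz-in-time regularity. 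I expect the main obstacle to be the third step: obtaining the correct commutator estimates in the uniformly local scale (including the interaction of the non-decaying flow with the cutoffs $\chi_j$) and, above all, estimating the far-field Serfati term at the level of $s$ derivatives, since its kernel $\nabla\nabla^\perp((1-a)\Phi)$ decays only like $|x|^{-2}$ and must be paired with the renormalized convolution $\stardot$ and the cancellation produced by the double integration by parts; it is this structure that keeps the estimate linear in the top-order norm and hence lets regularity propagate on the full interval $[0,T]$.
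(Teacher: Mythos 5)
Your overall strategy for the uniformly local half coincides with the paper's (Section \ref{S:SQGSobolev}): regularize the data, get a common existence time from the \Holder theory and the bound (\ref{shorttimebound}), prove a localized $H^s_{ul}$ energy estimate with commutator bounds whose coefficients are the \Holder norms, close the velocity norm through the constitutive relation, apply \Gronwalls lemma, and pass to a limit. Note, though, that the statement you were asked to prove also contains the \Holder existence itself; you take that half for granted, while in the paper it is the bulk of the work (Theorem \ref{SQGHolder}, an iteration in which the Serfati identity replaces Wu's Riesz-transform bound). Granting the \Holder half, your plan is structurally the right one, and your closure of $\|u_n\|_{H^s_{ul}}$ via the time-integrated Serfati identity plus an extra \Gronwalls step is an acceptable variant of the paper's pointwise-in-time identity $D^{\gamma}u = (a\Phi)\ast\nabla^{\perp}D^{\gamma}\theta + [D^{\gamma}(\nabla^{\perp}((1-a)\Phi))]\ast\theta$ obtained from Lemma \ref{BMOSerfati}.

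The genuine gap is in your approximation and limit step. You choose Schwartz data $\theta^0_n$ with $\theta^0_n \to \theta^0$ in $C^{\alpha'}(\R^2)$; for non-decaying $\theta^0 \in H^s_{ul}(\R^2)$ (e.g. $\theta^0(x)=\sin x_1$) no such sequence exists, since a uniform limit of functions vanishing at infinity vanishes at infinity, and the same obstruction kills $u^0_n \to u^0$ in $L^\infty$. This is not cosmetic: your identification ``the stability estimate underlying uniqueness forces $\theta_\infty=\theta$'' is the only mechanism by which you pass to the limit in the nonlinear term, because weak-$*$ convergence in $L^\infty(0,T;H^s_{ul})$ alone does not let you pass to the limit in $u_n\cdot\nabla\theta_n$. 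The paper avoids both problems at once: it smooths with $S_n$ (which preserves non-decay and gives uniform $H^s_{ul}$ and $C^{\alpha}$ bounds, Lemma \ref{smoothcutoffbounded}), and it extracts \emph{strong local} compactness — uniform bounds on $\phi_R\partial_t\theta_n$ plus Rellich and diagonalization give that $(\phi_R\theta_n)$ is Cauchy in $C([0,T];H^{s-1}(\R^2))$, with the Serfati identity giving the analogue for $(\phi_R u_n)$ — and then passes to the limit in the equation locally, recovering $L^\infty(0,T;H^s_{ul})$ by weak-$*$ lower semicontinuity. Your stated reason for wanting decaying data (rigor of the energy computation) is also unnecessary: the cutoffs $\phi_x$ already make the estimates legitimate for non-decaying smooth solutions, which is exactly how Theorem \ref{mainQGEversion} is proved. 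Finally, a smaller inaccuracy: the near-field term is not controlled by Young's inequality with a compactly supported $L^1$ kernel, because at top order one derivative must fall on $a\Phi$, producing a kernel of size $|x|^{-2}$ in $\R^2$; this is a principal-value operator plus a multiple of the identity and requires Lemmas \ref{L:CZLike} and \ref{L:ConvHsBound}, which your argument needs just as much as the paper's.
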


\begin{theorem}\label{T:Main2}
Let $u^0 \in H^s_{ul}(\R^3)$ for some $s \ge 3$, and let $\omega^0=\nabla\times u^0$.  There exists $T>0$ and a unique classical solution $(u, p)$ to (E) satisfying 
\begin{equation*}
\begin{split}
&u \in L^{\infty}(0,T; H^s_{ul}(\R^3)) \cap Lip([0,T] ; H^{s-1}_{ul}(\R^3)).
\end{split}
\end{equation*}    
\end{theorem}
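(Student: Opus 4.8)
The plan is to prove Theorem~\ref{T:Main2} by the approximation scheme already advertised in the introduction: first establish existence in the H\"{o}lder-Zygmund category (using the Serfati-type identity for 3D Euler of Lemma~\ref{L:SertatiID3DEuler}), then mollify the $H^s_{ul}$ datum to obtain a sequence of H\"{o}lder solutions, derive bounds on those solutions that are uniform in the mollification parameter in the $H^s_{ul}(\R^3)$ norm, and pass to the limit. Concretely, I would fix $u^0\in H^s_{ul}(\R^3)$, $s\ge 3$, choose $\alpha>1$ with $H^s_{ul}(\R^3)\hookrightarrow C^\alpha(\R^3)$, and let $u^0_\varepsilon = \rho_\varepsilon * u^0$. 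Each $u^0_\varepsilon$ lies in $C^{r}(\R^3)$ for every $r$, so the H\"{o}lder existence theory gives a solution $(u_\varepsilon,p_\varepsilon)$ of $(E)$ (equivalently $(E_\omega)$ for $\omega_\varepsilon=\curl u_\varepsilon$) on some time interval, and the first task is to show these intervals do not shrink to zero, i.e.\ there is a uniform $T>0$.

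The core of the argument is the uniform energy-type estimate in $H^s_{ul}$. I would apply $\partial^\beta$ for $|\beta|\le s$ to the vorticity equation $(E_\omega)_1$, multiply by $\phi^2 \partial^\beta\omega_\varepsilon$ with $\phi$ a fixed compactly supported cutoff (translated over a lattice, as in the definition of $H^s_{ul}$ in Section~\ref{S:Hsul}), and integrate. The transport term $u_\varepsilon\cdot\grad\omega_\varepsilon$ is handled by the usual commutator/Moser estimates, producing $\norm{u_\varepsilon}_{\mathrm{Lip}}\norm{\omega_\varepsilon}_{H^s_{ul}}^2$ plus lower-order terms, where the localized nature of the norm forces an extra term from $\grad\phi$ that is controlled because $u_\varepsilon$ is divergence free and by absorbing it into a sum over neighboring cells. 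The stretching term $\omega_\varepsilon\cdot\grad u_\varepsilon$ is quadratic in $\omega_\varepsilon$ after using $\grad u_\varepsilon = \grad K * \omega_\varepsilon$; here the Serfati identity replaces the (non-decaying-data-illegal) direct convolution: it expresses $\grad u_\varepsilon$ in terms of a near-field piece $(\text{cutoff})*\grad\grad K * \omega_\varepsilon$ plus a time-integral far-field piece that has been integrated by parts twice, both of which are estimated in $H^s_{ul}$ using that $\grad\grad((1-a)\Phi)$-type kernels are integrable against the quadratic expression $u_\varepsilon\otimes u_\varepsilon$. Closing this gives a differential inequality $\tfrac{d}{dt}\norm{\omega_\varepsilon}_{H^s_{ul}}\lesssim P(\norm{\omega_\varepsilon}_{H^s_{ul}})$ with $\varepsilon$-independent constants, hence a uniform $T>0$ and a uniform bound $\sup_{[0,T]}\norm{u_\varepsilon}_{H^s_{ul}(\R^3)}\le C$; from the equation one then gets the uniform Lipschitz-in-time bound in $H^{s-1}_{ul}$.

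Given the uniform bounds, I would extract a subsequence converging in $C([0,T];H^{s'}_{ul,\mathrm{loc}})$ for $s'<s$ by Aubin-Lions on each fixed ball (using the $H^{s-1}_{ul}$ time-regularity for compactness) together with weak-$*$ convergence in $L^\infty(0,T;H^s_{ul})$. The strong local convergence is enough to pass to the limit in every term of $(E_\omega)$ and of the Serfati identity, yielding a solution $u$ with $u\in L^\infty(0,T;H^s_{ul})\cap \mathrm{Lip}([0,T];H^{s-1}_{ul})$; recovering $p$ and the velocity form $(E)$ is then a matter of solving $-\Delta p = \partial_i\partial_j(u^iu^j)$ with a Serfati-adapted representation, as indicated in the introduction, and checking the pressure estimates stated there. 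Uniqueness follows from an $L^2$-type (or $H^0_{ul}$) energy estimate for the difference of two solutions, again using the Serfati identity to control $\grad u$ in terms of lower norms, giving a \Gronwalls argument.

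The step I expect to be the main obstacle is the uniform-in-$\varepsilon$ $H^s_{ul}$ estimate for the vortex-stretching term: unlike $(SQG)$, where $u$ has the same regularity as $\theta$, here one needs the extra derivative $\grad u = \grad K * \omega$, and the only legal way to access it for non-decaying data is through the doubly-integrated-by-parts Serfati identity of Lemma~\ref{L:SertatiID3DEuler}, so the delicate point is bounding the near-field singular-integral piece and the far-field time-integral piece of $\grad\grad u_\varepsilon$ in $H^s_{ul}$ by $P(\norm{u_\varepsilon}_{H^s_{ul}})$ uniformly, while simultaneously tracking the cutoff-derivative terms intrinsic to the uniformly local norm. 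Everything else—the H\"{o}lder base theory, the compactness, and the uniqueness—is comparatively routine once that estimate is in hand.
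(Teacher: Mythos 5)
Your overall skeleton (approximate the data, prove a uniform $H^s_{ul}$ vorticity estimate with commutator bounds and a cutoff constitutive law, get a uniform time, pass to the limit locally, recover the pressure, uniqueness by a localized energy argument) matches the paper's strategy, but your plan has a genuine gap at exactly the point you flag as delicate. You approximate by mollification, $u^0_\varepsilon=\rho_\varepsilon * u^0$, which produces smooth but still \emph{non-decaying} data, and you then propose to access $\grad u_\varepsilon$ through the Serfati identity of Lemma \ref{L:SertatiID3DEuler}. That lemma is proved only for solutions whose vorticity is compactly supported in space; for your non-decaying approximate solutions neither the classical Biot--Savart law $u=K*\omega$ nor the Serfati identity is available without a separate proof (for ($SQG$) the paper had to build the analogous identity into the iteration and prove Lemma \ref{BMOSerfati}; nothing of the sort is done, by you or by the paper, for non-decaying 3D Euler solutions). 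The paper sidesteps this entirely by preparing the data differently: Lemma \ref{unifbound} cuts off and smooths the stream function of Lemma \ref{L:Stream} to produce Schwartz-class, divergence-free $u^0_n$, uniformly bounded in $H^s_{ul}$, so the approximate solutions are classical decaying $H^k$ solutions for which the exact Biot--Savart law and the exact pressure formula hold, and the cutoff identities (\ref{BSReplacementEuler}) and (\ref{smoothpidentity}) are then obtained by legitimate integrations by parts. This data-preparation step is not a cosmetic difference; it is what makes every constitutive-law manipulation in the uniform estimate licit, and your mollification scheme omits it.

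A second, related inaccuracy: it is not true that the ``only legal way'' to reach $\grad u$ is the doubly-integrated-by-parts, time-integrated identity of Lemma \ref{L:SertatiID3DEuler}. In the paper's a priori estimate (Theorem \ref{mainLpversion}) the stretching term is closed with the much simpler decomposition (\ref{BSReplacementEuler}): writing $\omega^i_k=\partial_k u^i-\partial_i u^k$, the far-field part of $K_3*\omega$ is integrated by parts \emph{once}, putting one derivative on the integrable kernel $(1-a)K_3$ and leaving it acting on the bounded function $u$ itself; applying Lemma \ref{L:ConvHsBound} then yields (\ref{uomegasobolevbound}), $\norm{u}_{H^s_{ul}}\le C(\norm{\omega}_{H^{s-1}_{ul}}+\norm{u}_{L^\iny})$, with no time integral at all. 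Indeed Lemma \ref{L:SertatiID3DEuler} is never invoked in the proof of Theorem \ref{EulerUnifLocalSobolev}; insisting on it would both complicate the estimate and, as noted above, require first proving it for your approximating class. Finally, note that the paper does not obtain the uniform time from a self-contained quasilinear $H^s_{ul}$ inequality as you propose: Theorem \ref{mainLpversion} needs $\norm{u}_{\tilde{C}^1}$ as a coefficient, and the uniform $T$ comes from the H\"older estimate (\ref{EHolderuniformbound}) together with a Beale--Kato--Majda-type continuation in $H^k$. Your quasilinear route could in principle be made to work, but only after the constitutive-law issue above is repaired, so the missing ingredient is precisely the construction of decaying divergence-free approximations (or, alternatively, a proof of a Serfati-type identity for non-decaying 3D Euler solutions), not the compactness or uniqueness steps.
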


\subsection{Prior work}
There are a number of approaches to studying non-decaying solutions of nonlinear systems of partial differential equations, one of which is to focus on rough solutions, while another is to study more regular solutions. 

For rough data, there is prior work on non-decaying solutions of the two-dimensional
Euler equations under the assumption that the initial velocity and initial vorticity
are only in $L^{\infty}.$  This approach was pioneered by Serfati \cite{Serfati1}, and
extended to contexts such as exterior domains by two of the authors and collaborators
\cite{AKLN}.

Wu has previously developed existence theory for ($SQG$) in H\"{o}lder spaces \cite{Wu}, with the restriction that the initial data is not only in a H\"{o}lder space but also in an $L^{q}$ space for some $q<\infty.$  In the present work, by incorporating estimates which stem from the
Serfati identity, we remove this assumption that the data are in $L^{q},$ finding existence of non-decaying H\"{o}lder solutions for ($SQG$).

In recent work, C\'{o}rdoba and Mart\'{i}nez-Zoro \cite{CM} have shown non-existence of solutions for ($SQG$) with data in H\"{o}lder spaces $C^k$ for integer $k \ge 2$.  This is not a contradiction to the present work, for although H\"{o}lder-Zygmund spaces coincide with \Holder spaces for non-integer exponents, they are larger than \Holder spaces for integer indices. This is discussed in more detail in Section \ref{preliminary} below.  The same situation, non-existence of solutions in classical H\"{o}lder spaces but existence instead in H\"{o}lder-Zygmund spaces, has been shown
to hold for the incompressible Euler equations as well \cite{Chemin2, Chemin1}.

Majda sketches a proof of existence for the compressible Euler equations in uniformly local Sobolev spaces in \cite{Majda}; 
Majda remarks that the approach of \cite{Majda} does not work for the incompressible case.
Other work for existence of fluid equations in the uniformly local Sobolev spaces 
includes a series of papers by Zelik, Anthony and Zelik, and Chepyzhov and Zelik
on the Navier-Stokes equations, the damped Euler equations, and the damped Navier-Stokes equations, all in two spatial dimensions \cite{anthonyZelik}, \cite{chepyzhovZelik}, \cite{Zelik2007}, \cite{Zelik2013}.
Alazard, Burq, and Zuily have proved well-posedness of the gravity water waves system (i.e. the incompressible, irrotational Euler
equations with the fluid region bounded above by a free surface, subject to gravity) in uniformly local Sobolev spaces
\cite{alazard}; of course the water waves system is dispersive, and is thus of a different character than the systems studied
in the present work.  Uniformly local solutions of the water waves system were then further studied by Nguyen \cite{nguyen}.

\subsection{Organization of the paper} We define \Holder-Zygmund spaces and uniformly local Sobolev spaces in Section \ref{preliminary}, and introduce notation and provide some key lemmas. In Section \ref{S:SQGHolder}, we obtain existence of solutions to $(SQG)$ in \Holder spaces, and then employ this result in Section \ref{S:SQGSobolev} to construct an approximation sequence to obtain existence to $(SQG)$ in uniformly local Sobolev spaces. In Section \ref{S:Euler3D} we obtain existence of solutions to the 3D Euler equations in uniformly local Sobolev spaces.

In the appendices, we establish Serfati-like identities for $(SQG)$ and 3D Euler, a constitutive relation for $(SQG)$, and a pressure identity for 3D Euler akin to one used in 2D in \cite{Serfati}.

%
% Section
%
\section{Definitions and preliminary lemmas}\label{preliminary}
In this section, we state some notation, definitions, and lemmas that will be useful in what follows.  

We let $a:\R^d\rightarrow \R$, $d \ge 2$, denote a radially symmetric, smooth, compactly supported cutoff function which is identically $1$ in a neighborhood of the origin and which vanishes outside of the ball of radius $2$.   For each $\lambda>0$ and each $x\in\R^d$, we let  $a_{\lambda} (x) = a(x/\lambda)$.    

Define $G$ on $\R^3$ by
\begin{align}\label{e:G}
	G(x)
		&=  \frac{1}{4 \pi \abs{x}},
\end{align}
the fundamental solution to the Laplacian in $\R^3$, meaning that $\Delta G = \delta$, the Dirac delta function. We use $\Phi$ to denote the fundamental solution of the fractional Laplacian $(-\Delta)^{1/2}$ on $\R^2$; that is,    
\begin{equation*}
	\Phi(x) = \frac{C}{|x|}
\end{equation*}
for a constant $C>0$.  Finally, we have the simple estimates,
\begin{align}\label{e:GPhiBounds}
	\begin{split}
	\norm{a_\lambda \Phi}_{L^1(\R^2)}
		\le \lambda, \quad
	\norm{\grad\grad^{\perp} ((1 - a_\lambda) \Phi)}_{L^1(\R^2)}
		\le C \lambda^{-1}.
	\end{split}
\end{align}

\subsection{The Littlewood-Paley operators}\label{S:LPOperators}
In Section \ref{S:SQGHolder}, we establish existence of solutions to ($SQG$) in the spaces $C^r(\R^2)$ for $r>1$, where $C^r(\R^2)$ is defined using the Littlewood-Paley decomposition.  We therefore begin this section with an overview of the Littlewood-Paley operators and some of their properties.
It is classical that there exists two functions ${\chi}, {\varphi} \in \mathcal{S}(\R^d)$ with supp $\hat{\chi}\subset \{\xi\in \R^d: |\xi |\leq \frac{5}{6} \}$ and supp $\hat{\varphi}\subset \{\xi\in \R^d: \frac{3}{5} \leq|\xi |\leq \frac{5}{3} \}$, such that, if for every $j\in\Z$ we set $\varphi_j(x) = 2^{jd} \varphi(2^j x)$, then
\begin{equation*}
\begin{split}
	&\hat{\chi}+ \sum_{j\geq 0} \hat{\varphi_j}
		= \hat{\chi} + \sum_{j\geq 0} \hat{\varphi}(2^{-j} \cdot) 
		\equiv 1.
\end{split}
\end{equation*}

For $n\in\Z$, define ${\chi}_n \in \mathcal{S}(\R^d)$ in terms of its Fourier transform ${\hat{\chi}}_n$, where ${\hat{\chi}}_n$ satisfies 
\begin{equation*}
{\hat{\chi}}_n (\xi) =   \hat{\chi}(\xi) + \sum_{j\leq n} \hat{\varphi}_j(\xi)
\end{equation*}
for all $\xi\in\R^d$.  For $f\in \mathcal{S}'(\R^d)$, define the operator $S_n$ by  
\begin{equation*}
S_n f = {{\chi}}_n \ast f.
\end{equation*}
Finally, for $f\in \mathcal{S}'(\R^d)$ and $j\in\Z$, define the inhomogeneous Littlewood-Paley operators ${\Delta}_j$ by
\begin{align*}
    \begin{matrix}
        &\Delta_j f  = \left\{
            \begin{matrix}
                     0, \qquad j<-1\\
                 \chi\ast f,  \qquad j=-1\\
                \varphi_j\ast f, \qquad j\geq 0,
            \end{matrix}
            \right.
    \end{matrix}
\end{align*}
and, for all $j\in\Z$, define the homogeneous Littlewood-Paley operators $\dot{\Delta}_j$ by
\begin{equation*}
    \dot{\Delta}_j f = {\varphi}_j \ast f.
\end{equation*}  
Note that $\dot{\Delta}_j f = {\Delta}_j f$ when $j\geq 0$.

We will make use of Bernstein's Lemma in what follows.  A proof of the lemma can be found in \cite{Chemin1}, Chapter 2.  Below, $C_{a,b}(0)$ denotes the annulus with inner radius $a$ and outer radius $b$.  
\begin{lem}\label{bernstein}
(Bernstein's Lemma) Let $r_1$ and $r_2$ satisfy $0<r_1<r_2<\infty$, and let $p$ and $q$ satisfy $1\leq p \leq q \leq \infty$. There exists a positive constant $C$ such that for every integer $k$, if $u$ belongs to $L^p(\R^d)$, and supp $\hat{u}\subset B_{r_1\lambda}(0)$, then 
\begin{equation}\label{bern1}
\sup_{|\alpha|=k} ||\partial^{\alpha}u||_{L^q} \leq C^k{\lambda}^{k+d(\frac{1}{p}-\frac{1}{q})}||u||_{L^p}.
\end{equation}
Furthermore, if supp $\hat{u}\subset C_{r_1\lambda, r_2\lambda}(0)$, then 
\begin{equation}\label{bern2}
C^{-k}{\lambda}^k||u||_{L^p} \leq \sup_{|\alpha|=k}||\partial^{\alpha}u||_{L^p} \leq C^{k}{\lambda}^k||u||_{L^p}.
\end{equation} 
\end{lem}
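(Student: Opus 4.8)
The plan is to reduce both estimates to Young's convolution inequality combined with the scaling law for the Fourier transform, the one real subtlety being that the constant $C$ must be allowed to depend on $d$, $p$, $q$, $r_1$, $r_2$ (and on the auxiliary cutoffs chosen below) but \emph{not} on $k$, $\lambda$, $u$, or $\alpha$, so that the bounds genuinely have the form $C^k$ rather than, say, $k!$. For \eqref{bern1} I would fix $\psi \in C^\infty_c(\R^d)$ with $\psi \equiv 1$ on $B_{r_1}(0)$ and $\supp \psi \subset B_{2 r_1}(0)$, and set $\psi_\lambda(\xi) = \psi(\xi/\lambda)$. Since $\supp \hat u \subset B_{r_1 \lambda}(0)$ we have $\hat u = \psi_\lambda \hat u$, hence $\prt^\alpha u = \bigl(\mathcal{F}^{-1}[(i\xi)^\alpha \psi_\lambda]\bigr) * u$, and Young's inequality with $1 + 1/q = 1/r + 1/p$ gives $\norm{\prt^\alpha u}_{L^q} \le \norm{\mathcal{F}^{-1}[(i\xi)^\alpha \psi_\lambda]}_{L^r}\norm{u}_{L^p}$. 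A change of variables, together with the identity $d(1 - 1/r) = d(1/p - 1/q)$, rewrites the multiplier norm as $\lambda^{\abs{\alpha} + d(1/p - 1/q)}\norm{\mathcal{F}^{-1}[(i\xi)^\alpha \psi]}_{L^r}$, so everything comes down to bounding $\norm{\mathcal{F}^{-1}[(i\xi)^\alpha \psi]}_{L^r}$ by $C^{\abs{\alpha}}$ uniformly in $\alpha$. That I would obtain by interpolating between $L^1$ and $L^\infty$ (which also covers the endpoint cases of $p,q,r$): the $L^\infty$ bound is $\le \norm{(i\xi)^\alpha \psi}_{L^1} \le C^{\abs{\alpha}}$, and the $L^1$ bound follows from $\norm{(1 + \abs{x})^{-N}}_{L^1}\norm{(1 - \Delta_\xi)^N[(i\xi)^\alpha \psi]}_{L^\infty}\abs{\supp \psi}$ for $2N > d$, where Leibniz's rule and $\prt^\gamma \xi^\alpha = (\alpha!/(\alpha - \gamma)!)\,\xi^{\alpha - \gamma}$ produce at most a fixed polynomial in $\abs{\alpha}$ times $(2 r_1)^{\abs{\alpha}}$, which is $\le C^{\abs{\alpha}}$.

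The upper estimate in \eqref{bern2} is then just \eqref{bern1} with $q = p$ and with $r_1$ replaced by $r_2$, since $C_{r_1 \lambda, r_2 \lambda}(0) \subset B_{r_2 \lambda}(0)$. The genuinely new point is the lower estimate in \eqref{bern2}, where the derivative must be inverted. I would fix $\phi \in C^\infty_c(\R^d)$ with $\phi \equiv 1$ on $C_{r_1, r_2}(0)$ and $\supp \phi \subset C_{r_1/2,\, 2 r_2}(0)$, so that $\hat u = \phi_\lambda \hat u$, and then use that on $\R^d \setminus \{0\}$ the multinomial theorem gives $1 = \sum_{\abs{\alpha} = k}\frac{k!}{\alpha!}\frac{\xi^{2\alpha}}{\abs{\xi}^{2k}}$. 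Writing $\psi_\alpha(\xi) = \abs{\xi}^{-2k}\xi^\alpha \phi(\xi) \in C^\infty_c(\R^d)$ (well-defined since $\supp \phi$ avoids the origin) and using $\xi^\alpha \hat u = i^{-k}\widehat{\prt^\alpha u}$, one gets $\hat u(\xi) = i^{-k}\lambda^{-k}\sum_{\abs{\alpha} = k}\frac{k!}{\alpha!}\,\psi_\alpha(\xi/\lambda)\,\widehat{\prt^\alpha u}(\xi)$. Inverting the transform and applying Young's inequality (noting $\norm{\mathcal{F}^{-1}[\psi_\alpha(\cdot/\lambda)]}_{L^1} = \norm{\mathcal{F}^{-1}\psi_\alpha}_{L^1}$ by scaling) yields $\norm{u}_{L^p} \le \lambda^{-k}\bigl(\sum_{\abs{\alpha} = k}\frac{k!}{\alpha!}\norm{\mathcal{F}^{-1}\psi_\alpha}_{L^1}\bigr)\sup_{\abs{\alpha} = k}\norm{\prt^\alpha u}_{L^p}$, and the argument closes once one checks $\sum_{\abs{\alpha} = k}\frac{k!}{\alpha!}\norm{\mathcal{F}^{-1}\psi_\alpha}_{L^1} \le C^k$. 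This follows from $\sum_{\abs{\alpha} = k}\frac{k!}{\alpha!} = d^k$ together with a bound $\norm{\mathcal{F}^{-1}\psi_\alpha}_{L^1} \le P(k)\,C^k$ for a fixed polynomial $P$, obtained exactly as in the first part, now using that on $\supp \phi$ one has $\abs{\xi} \ge r_1/2$ and $\abs{\xi^\alpha} \le (2 r_2)^k$.

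The hard part will be the bookkeeping that keeps every constant of the form $C^k$: one must estimate $\norm{\mathcal{F}^{-1}[(i\xi)^\alpha \psi]}_{L^1}$ and $\norm{\mathcal{F}^{-1}[\abs{\xi}^{-2k}\xi^\alpha \phi]}_{L^1}$ uniformly over $\abs{\alpha} = k$, absorbing into the exponential the polynomial-in-$k$ factors that appear upon differentiating these symbols, and one must exploit the \emph{exact} identity $\sum_{\abs{\alpha} = k} k!/\alpha! = d^k$ rather than a term-by-term bound (individual multinomial coefficients $k!/\alpha!$ can themselves be as large as $C^k$, but their sum is precisely $d^k$, which is exactly the room available). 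Everything else — the reductions via $\hat u = \psi_\lambda \hat u$ and $\hat u = \phi_\lambda \hat u$, Young's inequality, and Fourier scaling — is routine.
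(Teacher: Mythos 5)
Your proof is correct, and it is essentially the standard argument for Bernstein's Lemma: the paper itself gives no proof, simply citing Chapter 2 of \cite{Chemin1}, and your reduction via $\hat u = \psi_\lambda \hat u$, Young's inequality, Fourier scaling, and the multinomial identity $|\xi|^{2k} = \sum_{|\alpha|=k}\frac{k!}{\alpha!}\xi^{2\alpha}$ for the reverse inequality is precisely the classical route taken there. The only cosmetic quibbles are the weight mismatch ($(1+|x|)^{-N}$ with $2N>d$ should be $(1+|x|^2)^{-N}$, or take $N>d$) and the $k=0$ case, where the bound is really of the form $C^{k+1}$ rather than $C^k$ --- an imprecision already present in the paper's statement, not in your argument.
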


\begin{lem}\label{L:CZLike}
	Let $\Psi(x) = C \abs{x}^{1 - d}$ on $\R^d$.
	There exists $C > 0$ such that for every $j \in \Z$,
	\begin{align}\label{withouta}
		\smallnorm{\dot{\Delta}_j  (\grad \Psi* f)}_{L^\iny(\R^d)}
			\le C \smallnorm{\dot{\Delta}_j f}_{L^\iny(\R^d)}.
	\end{align}
	The result holds with $\grad \Psi$ replaced by $\grad (a\Psi)$. 
\end{lem}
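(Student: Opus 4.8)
The plan is to prove \eqref{withouta} by exploiting the frequency localization of $\dot{\Delta}_j f$ together with the homogeneity of the kernel $\grad\Psi$. Since $\Psi(x)=C|x|^{1-d}$ is (up to constant) the Newtonian potential, $\grad\Psi$ is a Calder\'on--Zygmund kernel that is homogeneous of degree $-d$, so $\grad\Psi * f$ is morally a singular integral of convolution type. The key point is that $\dot{\Delta}_j$ and convolution with $\grad\Psi$ commute (both are Fourier multipliers), so $\dot{\Delta}_j(\grad\Psi * f) = (\grad\Psi)*(\dot{\Delta}_j f)$, and we may replace $f$ by a function whose Fourier transform is supported in the annulus $C_{3\cdot2^{j}/5,\, 5\cdot2^{j}/3}(0)$. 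Writing $g = \dot{\Delta}_j f$ and letting $\widetilde\varphi_j$ be a smooth bump equal to $1$ on that annulus and supported in a slightly larger one, we have $g = \widetilde\varphi_j * g$ in the sense that the multiplier for $\dot{\Delta}_j(\grad\Psi * g)$ is $\widehat{\widetilde\varphi_j}\,\widehat{\grad\Psi}$. Thus it suffices to show that the convolution kernel $K_j := \widetilde\varphi_j * \grad\Psi$ satisfies $\|K_j\|_{L^1(\R^d)} \le C$ uniformly in $j$, because then $\|\dot{\Delta}_j(\grad\Psi*f)\|_{L^\infty} = \|K_j * g\|_{L^\infty} \le \|K_j\|_{L^1}\|g\|_{L^\infty}$, which is exactly \eqref{withouta}.

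The uniform $L^1$ bound on $K_j$ follows from scaling. By homogeneity, $\widehat{\grad\Psi}(\xi) = c\,\xi/|\xi|$ (a bounded, smooth-away-from-origin, homogeneous-degree-zero multiplier), so $\widehat{K_j}(\xi) = \widehat{\widetilde\varphi_j}(\xi)\,c\,\xi/|\xi| = \widehat{\widetilde\varphi}(2^{-j}\xi)\,c\,\xi/|\xi|$. A change of variables shows $K_j(x) = 2^{jd}K_0(2^j x)$ where $K_0 = \widetilde\varphi * \grad\Psi$ has Fourier transform $\widehat{\widetilde\varphi}(\xi)\,c\,\xi/|\xi|$. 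Since $\widehat{\widetilde\varphi}$ is smooth and compactly supported (hence $\widehat{K_0}$ is smooth and compactly supported, as $\widetilde\varphi$ is supported in an annulus away from the origin where $\xi/|\xi|$ is smooth), $K_0$ is a Schwartz function, so $\|K_0\|_{L^1(\R^d)} < \infty$; and $\|K_j\|_{L^1} = \|K_0\|_{L^1}$ by the $L^1$-invariance of this rescaling. This gives the claimed constant independent of $j$.

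For the final sentence, replacing $\grad\Psi$ by $\grad(a\Psi)$, the argument is essentially the same: write $a\Psi = \Psi - (1-a)\Psi$, so $\grad(a\Psi) = \grad\Psi - \grad((1-a)\Psi)$. The first term is handled above. For the second, $(1-a)\Psi$ vanishes near the origin and agrees with $\Psi$ at infinity, so $\grad((1-a)\Psi)$ is smooth and decays like $|x|^{-d}$ at infinity, hence is \emph{not} in $L^1$; but that is irrelevant because we only need $\dot{\Delta}_j$ applied to its convolution with $f$. Again commuting $\dot{\Delta}_j$ past the convolution and inserting $\widetilde\varphi_j$, we must bound $\|\widetilde\varphi_j * \grad((1-a)\Psi)\|_{L^1}$ uniformly in $j$. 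Here I would split according to the sign of $j$: for $j \ge 0$ one integrates by parts, using that $\widehat{\widetilde\varphi_j}$ vanishes near $\xi = 0$ to trade decay in frequency for smoothness, exploiting that $\grad((1-a)\Psi)$ is $C^\infty$ with all derivatives decaying, so the kernel $\widetilde\varphi_j * \grad((1-a)\Psi)$ is rapidly decaying at scale $2^{-j}$ with $L^1$ norm $O(2^{-j})$ or at least $O(1)$; for $j < 0$, one uses directly that $\|\widetilde\varphi_j\|_{L^1} = \|\widetilde\varphi\|_{L^1}$ and that the convolution with the (locally integrable, suitably decaying) smooth function $\grad((1-a)\Psi)$ localized by $\widetilde\varphi_j$'s scale produces a uniformly bounded $L^1$ norm — the cleanest route being to note that for $j<0$ one can alternatively bound $\|\dot\Delta_j(\grad((1-a)\Psi)*f)\|_{L^\infty} \le \|\dot\Delta_j\|_{L^\infty\to L^\infty} \|\grad((1-a)\Psi)*f\|_{L^\infty}$ only if $\grad((1-a)\Psi) \in L^1$, which it is not, so instead one keeps the frequency cutoff and uses the rescaled-Schwartz-kernel argument as before, now observing $\widehat{\widetilde\varphi}(2^{-j}\xi)\widehat{\grad((1-a)\Psi)}(\xi)$ is still smooth and rapidly decaying because $\widehat{\grad((1-a)\Psi)} = \widehat{\grad\Psi} - \widehat{\grad(a\Psi)}$ and $\grad(a\Psi)$ is compactly supported hence has entire Fourier transform with controlled growth, while the product with the compactly supported $\widehat{\widetilde\varphi_j}$ kills any growth. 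The main obstacle, and where care is genuinely needed, is exactly this: tracking that the kernel $\grad((1-a)\Psi)$ — though not integrable — contributes a uniformly bounded operator norm on $L^\infty$ after the Littlewood-Paley projection, which hinges on the projection's frequency support avoiding the origin (for $j\ge0$, giving decay) and being compact (for all $j$, taming the non-decay of $\grad((1-a)\Psi)$ in physical space).
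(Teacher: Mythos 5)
Your proof of (\ref{withouta}) itself is the standard scaling argument and is essentially the argument the paper defers to (Lemma 8 of \cite{CK}): commute $\dot{\Delta}_j$ with the convolution, insert a fattened annular cutoff $\widetilde{\varphi}_j$, and bound $\|\widetilde{\varphi}_j * \grad\Psi\|_{L^1}$ uniformly in $j$ by rescaling, using that $\widehat{\grad\Psi}$ is homogeneous of degree zero and smooth away from the origin. (A cosmetic point: $\Psi = C\abs{x}^{1-d}$ is the order-one Riesz potential kernel, not the Newtonian potential; this changes nothing.) For the second assertion you take a genuinely different route from the paper: the paper writes $\grad(a\Psi)*f(x) = \PV\int_{\R^d}\grad(a\Psi)(x-y)f(y)\,dy + Cf(x)I$ (Proposition 6.1 of \cite{BK}) and then simply invokes the known analogue of (\ref{withouta}) for \Calderon-Zygmund operators \cite{Stein}, the identity term being trivially harmless; you instead decompose $\grad(a\Psi)=\grad\Psi-\grad((1-a)\Psi)$ and estimate the localized far-field kernel $\widetilde{\varphi}_j*\grad((1-a)\Psi)$ in $L^1$ directly on the Fourier side. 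Your approach is more self-contained (no appeal to singular-integral theory beyond the multiplier computation), at the cost of a case analysis in $j$; the paper's is shorter because it leans on cited results.

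One caveat in your low-frequency discussion should be repaired before this counts as a complete proof: the claim that $\widehat{\grad((1-a)\Psi)}$ is \emph{smooth and rapidly decaying} is false at the origin, since $\widehat{\grad((1-a)\Psi)}(\xi)= c\,i\xi/\abs{\xi} - i\xi\,\widehat{a\Psi}(\xi)$ and the first term is discontinuous at $\xi=0$ (rapid decay holds only at infinity). What actually rescues the case $j<0$ is the same mechanism as for $\grad\Psi$: on the support of $\widehat{\widetilde{\varphi}_j}$, an annulus of radius $\sim 2^j$ avoiding the origin, the derivatives of $\xi/\abs{\xi}$ grow like $2^{-j\abs{\alpha}}$, which the change of variables $\eta = 2^{-j}\xi$ exactly compensates, while the remaining piece $i\xi\,\widehat{a\Psi}(\xi)$ has all derivatives bounded (because $a\Psi$ is compactly supported and integrable) and contributes only $O(2^{j})$ to the $L^1$ norm of the rescaled kernel; together with the rapid-decay estimate you indicate for $j\ge 0$, this closes the argument. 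So the strategy is sound and yields a correct alternative proof of the $\grad(a\Psi)$ statement, but the final paragraph should be rewritten to make this scaling compensation on the annulus the explicit reason the non-smoothness of the symbol at the origin is immaterial, rather than asserting smoothness of the full symbol.
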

\begin{proof}
The proof of (\ref{withouta}) follows from an argument identical to the proof of Lemma 8 in \cite{CK}.  To see that the result holds for $\grad (a\Psi)$ in place of $\grad \Psi$, first note that the equivalent of this lemma for a \Calderon-Zygmund operator $T$ is well-known \cite{Stein}.
	We note, however, that $T = \grad (a \Psi) *$ is not quite a \Calderon-Zygmund operator;
	rather (see, for instance, Proposition 6.1 of \cite{BK}),
	\begin{align*}
		\grad (a \Psi) * f(x)
			&= \PV \int_{\R^d} \grad (a \Psi) (x - y) f(y) \, dy
				+ C f(x) I,
	\end{align*}
	where the principal value integral does represent a \Calderon-Zygmund operator.
	The result then follows immediately.
\end{proof}

\begin{remark}\label{R:Convolutions}
	The convolution $\grad (a \Psi) * f$ in Lemma \ref{L:CZLike} is that of a compactly supported
	distribution with a distribution. As in Theorem 6.37(e) of \cite{R1991},
	we can move derivatives on and off each factor, so
	\begin{align*}
		\grad (a \Psi) * f
			= (a \Psi) * \grad f
			= \grad ((a \Psi) * f).
	\end{align*}
\end{remark}

\subsection{\HolderZygmund spaces}\label{S:Holder}
We now introduce the Littlewood-Paley-based version of \Holder (more properly \HolderZygmund) spaces.
\begin{definition}\label{holderspaces}
For $r\in\R$, we define $C^r(\R^d)$ to be the set of all $f\in \mathcal{S}'(\R^d)$ such that
\begin{equation*}
\sup_{j\geq -1} 2^{jr}\| \Delta_j f \|_{L^{\infty}} <\infty.
\end{equation*} 
We set
\begin{equation*}
\| f \|_{C^r} = \sup_{j\geq -1} 2^{jr}\| \Delta_j f \|_{L^{\infty}}.
\end{equation*} 
\end{definition}
It is well-known that when $r>0$ is a non-integer, the space $C^r(\R^d)$ defined above coincides with the classical \Holder space $\tilde{C}^r(\R^d)$, with norm
\begin{equation}
\| f \|_{\tilde{C}^r} = \sum_{0\leq |\alpha| \leq [r]} \| D^{\alpha} f \|_{L^{\infty}} + \sup_{x\neq y} \frac{|f(x) - f(y)|}{|x-y|^{r-[r]}}.
\end{equation}
However, when $r$ is an integer, $C^r(\R^d)$ does not coincide with the space $\tilde{C}^r(\R^d)$ of bounded functions with bounded derivatives up to and including order $r$.  In this case, we have the inclusion
\begin{equation*}
\tilde{C}^r(\R^d) \subset C^r(\R^d).
\end{equation*}      
Finally, we define the homogeneous \Holder spaces.
\begin{definition}\label{homogeneousholderspaces}
For $r\in\R$, we define $\dot{C}^r(\R^d)$ to be the set of all $f\in \mathcal{S}'(\R^d)$ such that
\begin{equation*}
\sup_{j\in\Z} 2^{jr}\| \dot{\Delta}_j f \|_{L^{\infty}} <\infty.
\end{equation*} 
We set
\begin{equation*}
\| f \|_{\dot{C}^r} = \sup_{j\in\Z} 2^{jr}\| \dot{\Delta}_j f \|_{L^{\infty}}.
\end{equation*} 
\end{definition}
The homogeneous Littlewood-Paley operators and \HolderZygmund spaces $\dot{C}^r(\R^d)$ will be useful in our analysis of nondecaying solutions to ($SQG$) and ($E$).  In particular, the operators $\dot{\Delta}_j$ allow us to make sense of the Riesz transforms applied to non-decaying functions by defining, for $f\in L^{\infty}(\R^d)$, 
\begin{equation}\label{LPRieszTransform}
\dot{\Delta}_j \partial_k(-\Delta)^{-1/2} f = \mathcal{F}^{-1} \left(\hat{\varphi}_j \frac{i\xi_k}{|\xi|}\hat{f}\right) = \mathcal{F}^{-1} \left(\hat{\varphi}_j \frac{i\xi_k}{|\xi|} \right)\ast f.
\end{equation}

The following lemmas will be useful when proving estimates on ($SQG$) in the $C^r$ spaces.
\begin{lemma}\label{holderrieszbound}
Let $s>1$.  If for every $j\geq 0$, $f\in L^{\infty}(\R^d)$ and $g\in C^s(\R^d)$ satisfy
\begin{equation*}
{\Delta}_j f = {\Delta}_j\nabla^{\perp}(-\Delta)^{-1/2}g
\end{equation*}
almost everywhere on $\R^d$, then $f$ belongs to $C^s(\R^d)$, and there exists an absolute constant $C>0$ such that
\begin{equation*}
\| f \|_{C^s} \leq C(\| f \|_{L^{\infty}} + \| g \|_{C^s}).
\end{equation*}
\end{lemma}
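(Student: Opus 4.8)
The plan is to decompose $f$ dyadically and handle the high and low frequencies separately, using the hypothesis to control the high frequencies and the $L^\infty$ bound to control the low frequencies. First I would observe that for $j \ge 0$ we have $\Delta_j = \dot\Delta_j$, so the hypothesis ${\Delta}_j f = {\Delta}_j \nabla^\perp(-\Delta)^{-1/2} g$ can be rewritten, using the representation \eqref{LPRieszTransform}, as $\dot\Delta_j f = \mathcal{F}^{-1}(\hat\varphi_j \frac{i\xi^\perp}{|\xi|})* g$. Since $\nabla^\perp(-\Delta)^{-1/2}$ is, up to constants, a vector of Riesz transforms, and Riesz transforms are Calderón–Zygmund operators (equivalently, by Lemma \ref{L:CZLike} with $\Psi(x) = C|x|^{1-d}$ the kernel $\nabla\Psi$ is of Calderón–Zygmund type), one gets the frequency-localized estimate
\begin{equation*}
\norm{\dot\Delta_j f}_{L^\infty} = \norm{\dot\Delta_j \nabla^\perp(-\Delta)^{-1/2} g}_{L^\infty} \le C \norm{\dot\Delta_j g}_{L^\infty} = C\norm{\Delta_j g}_{L^\infty}
\end{equation*}
for every $j \ge 0$. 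Multiplying by $2^{js}$ and taking the supremum over $j \ge 0$ bounds that part of the $C^s$ norm of $f$ by $C\norm{g}_{C^s}$.

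For the remaining piece, namely $j = -1$, the hypothesis gives no information (the low-frequency block of the Riesz transform of $g$ is not controlled by $\norm{g}_{C^s}$ for a nondecaying $g$), so here I would simply use $\norm{\Delta_{-1} f}_{L^\infty} = \norm{\chi * f}_{L^\infty} \le \norm{\chi}_{L^1} \norm{f}_{L^\infty} \le C\norm{f}_{L^\infty}$, since $\chi \in \mathcal{S}(\R^d) \subset L^1(\R^d)$. The factor $2^{-s}$ attached to $j=-1$ is harmless. Combining the two ranges,
\begin{equation*}
\norm{f}_{C^s} = \sup_{j \ge -1} 2^{js}\norm{\Delta_j f}_{L^\infty} \le C\pr{\norm{f}_{L^\infty} + \norm{g}_{C^s}},
\end{equation*}
which is the claim; in particular $f \in C^s(\R^d)$ because the right-hand side is finite.

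The main subtlety—really the only point that needs care—is the boundedness of the frequency-localized Riesz transform on $L^\infty$, i.e. justifying $\norm{\dot\Delta_j \nabla^\perp(-\Delta)^{-1/2} g}_{L^\infty} \le C\norm{\dot\Delta_j g}_{L^\infty}$ with a constant uniform in $j$. This is exactly the content of Lemma \ref{L:CZLike} (applied with $\Psi(x) = C|x|^{1-d}$, whose gradient is the Riesz kernel up to a constant), together with the observation that $\dot\Delta_j$ commutes with the Fourier multiplier; the hypothesis $s > 1$ is not actually needed for this lemma but is presumably retained for consistency with its later use. One should also note at the outset that the identity ${\Delta}_j f = {\Delta}_j \nabla^\perp(-\Delta)^{-1/2} g$ holding for all $j \ge 0$ a.e. pins down $f$ up to a distribution whose Fourier support lies in $\{|\xi| \le C\}$, which is precisely the $\Delta_{-1}$ block handled above, so no further hypothesis is needed to recover $f$ itself.
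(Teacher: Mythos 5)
Your proposal is correct and follows essentially the same route as the paper: split off the $j=-1$ block and bound it by $\|f\|_{L^\infty}$ via Young's inequality, then for $j\ge 0$ use the hypothesis together with Lemma \ref{L:CZLike} to get the uniform frequency-localized bound $\|\Delta_j\nabla^{\perp}(-\Delta)^{-1/2}g\|_{L^\infty}\le C\|\Delta_j g\|_{L^\infty}$ and take the weighted supremum. The only difference is expository (your remarks on $\Delta_j=\dot\Delta_j$ for $j\ge 0$ and on the low-frequency ambiguity), which the paper leaves implicit.
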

\begin{proof}
Young's inequality gives 
\begin{equation*}
\begin{split}
&\| f \|_{C^s} \leq C\| \Delta_{-1} f \|_{L^{\infty}} + \sup_{j\geq 0} 2^{js}\| \Delta_{j}f \|_{L^{\infty}}\\
&\qquad \leq C\| f\|_{L^{\infty}} + \sup_{j\geq 0} 2^{js}\|  \nabla^{\perp}(-\Delta)^{-1/2}\Delta_{j}g \|_{L^{\infty}}\\
&\qquad \leq C\| f \|_{L^{\infty}} + C\sup_{j\geq 0} 2^{js}\| \Delta_{j} g \|_{L^{\infty}}\\
&\qquad \leq C\| f \|_{L^{\infty}} + C\| g\|_{C^s},
\end{split}
\end{equation*}
where we used Lemma \ref{L:CZLike} to get the third inequality.  
\end{proof}
The following Lemma is Proposition 2.2 of \cite{Wu}.
\begin{lemma}\label{holderzygmund}
Let $k$ be a nonnegative integer and let $s\in (0,1)$.  For $f\in C^{k+s}(\R^d)$, there exists a constant $C$, depending only on $s$, such that
\begin{equation*}
\| f \|_{\tilde{C}^k} \leq C \| f \|_{C^{k+s}}.
\end{equation*}
Moreover, $C\rightarrow \infty$ as $s\rightarrow 0$.
\end{lemma}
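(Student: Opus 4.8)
The plan is the standard one: expand $f$ in its Littlewood--Paley blocks, differentiate termwise, and bound each block with Bernstein's Lemma (Lemma \ref{bernstein}). Since $r = k+s > 0$, membership $f \in C^{r}(\R^d)$ forces $f$ to be a bounded continuous function and the partial sums of $\sum_{j \ge -1} \Delta_j f$ converge to $f$ in $\mathcal{S}'(\R^d)$; so the first task is to upgrade this to uniform convergence of the series \emph{and} of all of its termwise derivatives of order $\le k$, which is what lets us conclude that $f$ genuinely lies in the classical space $\tilde{C}^k(\R^d)$ rather than merely in $C^{k+s}(\R^d)$.

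For the main estimate, fix a multi-index $\alpha$ with $|\alpha| \le k$. For $j \ge 0$, $\widehat{\Delta_j f}$ is supported in an annulus of size comparable to $2^j$, so \eqref{bern2} gives
\[
\|D^\alpha \Delta_j f\|_{L^\infty(\R^d)} \le C^{|\alpha|}\, 2^{j|\alpha|}\, \|\Delta_j f\|_{L^\infty(\R^d)} \le C^{|\alpha|}\, 2^{-j(k+s-|\alpha|)}\, \|f\|_{C^{k+s}},
\]
while for $j = -1$, since $\widehat{\Delta_{-1} f}$ is supported in a fixed ball, \eqref{bern1} gives $\|D^\alpha \Delta_{-1} f\|_{L^\infty} \le C^{|\alpha|}\|f\|_{C^{k+s}}$. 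Because $|\alpha| \le k$ we have $k+s-|\alpha| \ge s > 0$, so $\sum_{j\ge -1} 2^{-j(k+s-|\alpha|)}$ converges, the worst case being $|\alpha| = k$, where it equals a constant times $(1-2^{-s})^{-1}$. Hence $\sum_j D^\alpha \Delta_j f$ converges uniformly; taking $\alpha = 0$ identifies its uniform limit with $f$ (the series already converges to $f$ in $\mathcal{S}'$), and the classical theorem on termwise differentiation of uniformly convergent series then shows that $f$ has continuous bounded derivatives up to order $k$ with
\[
\|D^\alpha f\|_{L^\infty(\R^d)} \le \sum_{j \ge -1} \|D^\alpha \Delta_j f\|_{L^\infty(\R^d)} \le \frac{C}{1-2^{-s}}\,\|f\|_{C^{k+s}}.
\]
Summing over $0 \le |\alpha| \le k$ yields $\|f\|_{\tilde{C}^k} \le C_s \|f\|_{C^{k+s}}$ with $C_s \le C/(1-2^{-s})$, where $C$ depends only on $d$ and $k$.

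Finally, since $1 - 2^{-s} = (\ln 2)\, s + O(s^2)$ as $s \to 0^+$, the constant $C_s$ produced above is $\gtrsim 1/s$, giving $C_s \to \infty$ as $s \to 0^+$; and this degeneration is unavoidable, since $\tilde{C}^k(\R^d) \subsetneq C^k(\R^d)$ for integer $k$, so no bound uniform in $s$ down to the endpoint $s=0$ can hold. The computation is otherwise elementary; I expect the only point needing genuine care to be the interchange of classical differentiation with the infinite Littlewood--Paley sum, which is exactly where strict positivity of $k+s-|\alpha|$ (equivalently $s>0$) is used, and which makes transparent why the constant must blow up at $s=0$.
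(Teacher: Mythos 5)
Your proof is correct. Note that the paper itself offers no argument for Lemma \ref{holderzygmund}: it is simply quoted as Proposition 2.2 of \cite{Wu}, so there is nothing in-paper to compare against; your Littlewood--Paley/Bernstein computation is the standard (and essentially Wu's) proof, and it is complete as written --- the block estimates via Lemma \ref{bernstein}, the geometric summation producing the factor $(1-2^{-s})^{-1}$, and the justification of termwise differentiation through uniform convergence of $\sum_j D^\alpha\Delta_j f$ for $|\alpha|\le k$ are all in order. Two small remarks. First, the constant you obtain depends on $d$ and $k$ as well as on $s$; the paper's phrase ``depending only on $s$'' should be read as tracking only the dependence that degenerates. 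Second, your closing claim that the blow-up is \emph{unavoidable} because $\tilde{C}^k(\R^d)\subsetneq C^k(\R^d)$ is not, as stated, a complete argument: a function in $C^k\setminus\tilde{C}^k$ need not belong to any $C^{k+s}$ with $s>0$, so one cannot pass to the limit in the inequality directly; a genuine sharpness proof would use, say, lacunary Weierstrass-type examples $f_N=\sum_{0\le j\le N}2^{-jk}g(2^j\cdot)$ with $\hat g$ supported in an annulus, for which $\|f_N\|_{C^{k+s}}\approx 2^{Ns}$ while $\|f_N\|_{\tilde{C}^k}\gtrsim N$. This is a side issue, though: the lemma's ``Moreover'' clause only records that the constant degenerates as $s\to 0$, which your bound $C_s\lesssim (1-2^{-s})^{-1}\sim (s\ln 2)^{-1}$ already exhibits.
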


\begin{lemma}\label{L:WasSmallConst}
Let $s>0$, and assume $f\in C^s(\R^2)$.  Then 
\begin{equation*}
\|\nabla^{\perp}(a_{\lambda}\Phi) \ast f \|_{L^{\infty}} \leq C \| f \|_{C^s},
\end{equation*}
where $C$ depends only on $\lambda$ and $s$.% 
\end{lemma}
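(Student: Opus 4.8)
The plan is to reduce the bound to an estimate on each Littlewood--Paley block of $f$ and then to exploit the fact that, since $s>0$, the high-frequency blocks carry a summable factor $2^{-js}$. Write the inhomogeneous decomposition $f=\Delta_{-1}f+\sum_{j\ge 0}\dot{\Delta}_j f$. Because $\|\dot{\Delta}_j f\|_{L^\infty}\le 2^{-js}\|f\|_{C^s}$ for $j\ge 0$, this series converges in $L^\infty$, and since convolution against the compactly supported distribution $\nabla^\perp(a_\lambda\Phi)$ is continuous on $\mathcal{S}'(\R^2)$, we get
\begin{equation*}
\nabla^\perp(a_\lambda\Phi)*f=\nabla^\perp(a_\lambda\Phi)*\Delta_{-1}f+\sum_{j\ge 0}\nabla^\perp(a_\lambda\Phi)*\dot{\Delta}_j f .
\end{equation*}
It then suffices to bound each term by a multiple of $\|f\|_{C^s}$ with the sum convergent.

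For the low-frequency term, $\Delta_{-1}f=\chi*f$ is band-limited, so by Remark~\ref{R:Convolutions} we may move the derivative onto it, $\nabla^\perp(a_\lambda\Phi)*\Delta_{-1}f=(a_\lambda\Phi)*\nabla^\perp\Delta_{-1}f$. Using $\|a_\lambda\Phi\|_{L^1}\le\lambda$ from \eqref{e:GPhiBounds}, the Bernstein estimate $\|\nabla^\perp\Delta_{-1}f\|_{L^\infty}\le C\|\Delta_{-1}f\|_{L^\infty}$ (Lemma~\ref{bernstein}), and the definition of $\|\cdot\|_{C^s}$, this term is bounded by $C\lambda\|f\|_{C^s}$.

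The heart of the matter is the per-block bound, for $j\ge 0$,
\begin{equation*}
\|\nabla^\perp(a_\lambda\Phi)*\dot{\Delta}_j f\|_{L^\infty}\le C(\lambda)(1+j)\,\|\dot{\Delta}_j f\|_{L^\infty},
\end{equation*}
which I would prove directly. Since $\dot{\Delta}_j f$ is smooth, the convolution is represented, as in the proof of Lemma~\ref{L:CZLike} (Proposition~6.1 of \cite{BK}), by a principal-value integral against $\nabla^\perp(a_\lambda\Phi)(z)$ plus a bounded multiple of $\dot{\Delta}_j f$. Let $\rho>0$ be such that $a\equiv 1$ on the ball of radius $\rho$, so $a_\lambda\equiv 1$ on $B_{\rho\lambda}$ while $\nabla^\perp(a_\lambda\Phi)$ is supported in $B_{2\lambda}$ and satisfies $|\nabla^\perp(a_\lambda\Phi)(z)|\le C|z|^{-2}$. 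Split the kernel at $|z|=r_j:=\min(2^{-j},\rho\lambda)$. On the inner ball $|z|<r_j$, where $a_\lambda\equiv 1$, the kernel equals $\nabla^\perp\Phi$, whose integral over every spherical shell centred at the origin vanishes (it is a radial scalar times $z^\perp/|z|$); subtracting $\dot{\Delta}_j f(x)$ and using the Bernstein bound $\|\nabla\dot{\Delta}_j f\|_{L^\infty}\le C2^j\|\dot{\Delta}_j f\|_{L^\infty}$ gives a contribution $\lesssim 2^j\|\dot{\Delta}_j f\|_{L^\infty}\int_{|z|<r_j}|z|^{-1}\,dz\lesssim \|\dot{\Delta}_j f\|_{L^\infty}$, since $r_j\le 2^{-j}$. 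On the outer shell $r_j<|z|<2\lambda$, the crude bound $|\nabla^\perp(a_\lambda\Phi)(z)|\le C|z|^{-2}$ gives a contribution $\lesssim \|\dot{\Delta}_j f\|_{L^\infty}\int_{r_j<|z|<2\lambda}|z|^{-2}\,dz\le C(\lambda)(1+j)\|\dot{\Delta}_j f\|_{L^\infty}$, the growth in $j$ arising precisely from the borderline non-integrability of $|z|^{-2}$ on $\R^2$. (Alternatively: $\nabla^\perp(a_\lambda\Phi)*\dot{\Delta}_j f$ has Fourier transform supported in the same dyadic annulus as $\dot{\Delta}_j f$, hence equals $\dot{\Delta}_j'$ of itself for a fattened block $\dot{\Delta}_j'$, and Lemma~\ref{L:CZLike} — whose proof applies verbatim with $a$ replaced by $a_\lambda$ and $\dot{\Delta}_j$ by $\dot{\Delta}_j'$ — yields the cleaner bound $\le C(\lambda)\|\dot{\Delta}_j f\|_{L^\infty}$, without the factor $1+j$.)

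Finally, summing over the high-frequency blocks,
\begin{equation*}
\sum_{j\ge 0}C(\lambda)(1+j)\|\dot{\Delta}_j f\|_{L^\infty}\le C(\lambda)\|f\|_{C^s}\sum_{j\ge 0}(1+j)2^{-js}=C(\lambda,s)\|f\|_{C^s},
\end{equation*}
the series converging because $s>0$; adding the low-frequency bound completes the proof. The main obstacle is the per-block estimate above: the kernel $\nabla^\perp(a_\lambda\Phi)$ just fails to lie in $L^1(\R^2)$, so a naive Young-type bound is unavailable, while pushing all derivatives onto $\dot{\Delta}_j f$ would cost an unsummable factor $2^j$ when $s\le 1$. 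One must therefore use the Calder\'on--Zygmund cancellation of $\nabla^\perp\Phi$ at the origin (equivalently, Lemma~\ref{L:CZLike}) and then rely on $s>0$ to absorb the mild remaining growth in $j$.
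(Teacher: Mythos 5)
Your proof is correct. It shares the paper's overall skeleton -- a Littlewood--Paley block decomposition, the low-frequency piece handled by moving the derivative onto $\Delta_{-1}f$ and using $\|a_\lambda\Phi\|_{L^1}\le\lambda$ together with Bernstein, and the high frequencies summed by exploiting $s>0$ -- but the key high-frequency step is done differently. The paper applies $\Delta_j$ to the output, commutes it with the convolution, and for $j\ge 0$ simply invokes Lemma \ref{L:CZLike} to get the clean per-block bound $\|\nabla^\perp(a_\lambda\Phi)\ast\Delta_j f\|_{L^\infty}\le C\|\Delta_j f\|_{L^\infty}$, after which the sum $\sum_{j\ge 0}2^{-js}$ closes the estimate; you instead decompose $f$ itself (equivalent, since the blocks commute with the convolution) and reprove the per-block bound by hand: splitting the kernel at $|z|\sim\min(2^{-j},\rho\lambda)$, using the oddness of $\nabla^\perp\Phi$ (the principal-value cancellation) plus Bernstein on the inner ball and the crude bound $|\nabla^\perp(a_\lambda\Phi)(z)|\le C|z|^{-2}$ on the outer shell, at the cost of a logarithmic factor $1+j$ that is absorbed because $\sum_{j\ge 0}(1+j)2^{-js}<\infty$. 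Your parenthetical alternative (spectral localization of the convolution to a fattened annulus plus Lemma \ref{L:CZLike}) is essentially the paper's argument. What your direct route buys is a self-contained estimate that makes explicit where the \Calderon--Zygmund cancellation of $\nabla^\perp\Phi$ enters and why the kernel's borderline failure to be in $L^1$ is harmless for $s>0$; what the paper's route buys is brevity and a cleaner constant, by reusing Lemma \ref{L:CZLike}, which it needs elsewhere in any case.
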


\begin{proof}
Write 
\begin{equation*}
\begin{split}
&\|\nabla^{\perp}(a_{\lambda}\Phi) \ast f \|_{L^{\infty}} \leq  \sum_{j\geq -1}\|\Delta_{j} ( \nabla^{\perp}(a_{\lambda}\Phi)  \ast  f) \|_{L^{\infty}} \\
&\qquad = \|\Delta_{-1} ( \nabla^{\perp}(a_{\lambda}\Phi)  \ast  f) \|_{L^{\infty}}   + \sum_{j\geq  0} 2^{js}2^{-js}\| \Delta_{j} ( \nabla^{\perp}(a_{\lambda}\Phi)  \ast  f)  \|_{L^{\infty}}\\
&\qquad \leq  \| a_{\lambda}\Phi  \ast  (\Delta_{-1}\nabla^{\perp}f) \|_{L^{\infty}}   + C\sup_{j\geq  0} 2^{js}\|   \nabla^{\perp}(a_{\lambda}\Phi)  \ast \Delta_{j} f  \|_{L^{\infty}}\\
&\qquad \leq C\| f \|_{L^{\infty}} + C\sup_{j\geq  0} 2^{js}\|  \Delta_{j} f  \|_{L^{\infty}} \leq C \| f \|_{C^{s}},
\end{split}
\end{equation*}
where we used Young's inequality, Bernstein's Lemma and Lemma \ref{L:CZLike} to get the third inequality.  This proves the lemma.

\end{proof}

\subsection{Uniformly local Sobolev spaces}\label{S:Hsul}
We now define the uniformly local Sobolev spaces and mention some of their properties.  We refer the reader to \cite{Kato} for further details.  We begin with a definition of $L^p_{ul}(\R^d)$.
\begin{definition}
For $p\in [1, \infty)$, we define $L^p_{ul}(\R^d)$ to be the set of all functions $f$ on $\R^d$ such that
\begin{equation}\label{Lpulnorm}
\| f \|_{L^p_{ul}} := \sup_{x\in\R^d} \left( \int_{|x-y|<1} |f(y)|^p \, dy \right)^{1\slash p} < \infty. 
\end{equation}
\end{definition}
\begin{definition}
For a nonnegative integer $s$, we define the space $H^s_{ul}(\R^d)$ to be the set of all functions $f\in L^{2}_{ul}(\R^d)$ such that all distributional derivatives $D^{\alpha}f$, with $|\alpha| \leq s$, also belong to $L^2_{ul}(\R^d)$.  We set
\begin{equation}\label{Hsulnorm}
\| f \|_{H^s_{ul}} = \sum_{|\alpha| \leq s} \| D^{\alpha}f \|_{L^2_{ul}}.
\end{equation}   
\end{definition}
In what follows, we make use of an equivalent norm to (\ref{Hsulnorm}), as given in Proposition \ref{equivnorms} below.  For this purpose, throughout the paper we let $\phi\in C^{\infty}_c(\R^d)$ be a standard bump function, identically 1 on $B_1(0)$, with support contained in $B_2(0)$, and we set $$\phi_x(y) = \phi(y-x).$$

We have the following proposition (see, for example, \cite{Kato}).
 \begin{prop}\label{equivnorms}
One can define an equivalent norm to (\ref{Lpulnorm}) on $L^p_{ul}(\R^d)$ by   
\begin{equation*}
\sup_{x\in\R^d} \| \phi_x f \|_{L^p}.  
\end{equation*}
Moreover, if for $\lambda>0$ fixed, 
\begin{equation}\label{bumpnotation}
\phi_{x,\lambda}(y) = \phi\left(\frac{y-x}{\lambda}\right),
\end{equation}
then for any pair $\lambda_1$, $\lambda_2>0$, the two norms  
\begin{equation*}
\sup_{x\in\R^d} \| \phi_{x, \lambda_1} f \|_{L^p}, \quad \sup_{x\in\R^d} \| \phi_{x, \lambda_2} f \|_{L^p} 
\end{equation*}
are equivalent.  Therefore, for any $\lambda>0$, the norm 
\begin{equation}\label{Hsullambda}
\| f \|_{H^s_{ul, \lambda}} :=\sum_{|\alpha| \leq s} \sup_{x\in\R^d}\| \phi_{x,\lambda}D^{\alpha}f \|_{L^2}
\end{equation} 
is equivalent to that in (\ref{Hsulnorm}).  Finally, the norm 
$$\sup_{x\in\R^d}\| \phi_{x,\lambda} f \|_{H^s}$$
is equivalent to that in (\ref{Hsullambda}) and can also be used as a norm on $H^s_{ul}(\R^d)$.   
\end{prop}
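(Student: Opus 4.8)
The plan is to prove the chain of equivalences in the order they are listed, reducing everything to two basic facts about translates of a fixed compactly supported bump function. First I would establish that $\sup_{x}\|\phi_x f\|_{L^p}$ is equivalent to the norm in~\eqref{Lpulnorm}. For the easy direction, since $|\phi|\le 1$ and $\supp\phi_x\subset B_2(x)$, covering $B_2(x)$ by finitely many (a number depending only on $d$) unit balls and using the definition of $\|f\|_{L^p_{ul}}$ in~\eqref{Lpulnorm} gives $\|\phi_x f\|_{L^p}\le C_d\|f\|_{L^p_{ul}}$. For the reverse direction, one uses that $\phi\equiv 1$ on $B_1(0)$, so $\|f\|_{L^p(B_1(x))}\le\|\phi_x f\|_{L^p}$, and taking the supremum over $x$ recovers $\|f\|_{L^p_{ul}}$ up to the constant $1$.

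Next I would handle the scaling independence: for $\lambda_1,\lambda_2>0$, the norms $\sup_x\|\phi_{x,\lambda_1}f\|_{L^p}$ and $\sup_x\|\phi_{x,\lambda_2}f\|_{L^p}$ are equivalent. By the first step each is equivalent to a quantity of the same shape with $\phi_{x,\lambda_i}$ replaced by a sum over a lattice of unit-scale bumps; more directly, if $\lambda_2\ge\lambda_1$ one covers $B_{2\lambda_2}(x)$ by $N=N(d,\lambda_1/\lambda_2)$ balls of radius $\lambda_1$ centered at points $x_1,\dots,x_N$, and since $|\phi_{x,\lambda_2}|\le 1\le\sum_k \mathbf 1_{B_{\lambda_1}(x_k)}$ one gets $\|\phi_{x,\lambda_2}f\|_{L^p}^p\le\sum_k\|f\|_{L^p(B_{\lambda_1}(x_k))}^p\le N\sup_z\|\phi_{z,\lambda_1}f\|_{L^p}^p$ using $\phi_{z,\lambda_1}\equiv 1$ on $B_{\lambda_1}(z)$; the opposite inequality is obtained by symmetry (swap the roles, now covering the smaller ball by a single larger one, which is immediate). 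This immediately gives that~\eqref{Hsullambda} is equivalent to~\eqref{Hsulnorm}, since~\eqref{Hsulnorm} is the special case of~\eqref{Hsullambda} at $\lambda$ small enough that $B_2(0)$ has been rescaled inside $B_1(0)$—or rather, one applies the $L^2_{ul}$ equivalence termwise to each $D^\alpha f$, $|\alpha|\le s$, and sums.

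Finally, for the last assertion I would show $\sup_x\|\phi_{x,\lambda}f\|_{H^s}$ is equivalent to~\eqref{Hsullambda}. The bound $\sup_x\|\phi_{x,\lambda}f\|_{H^s}\le C\|f\|_{H^s_{ul,\lambda}}$ follows from the Leibniz rule: $D^\alpha(\phi_{x,\lambda}f)=\sum_{\beta\le\alpha}\binom{\alpha}{\beta}(D^\beta\phi_{x,\lambda})(D^{\alpha-\beta}f)$, the derivatives of $\phi_{x,\lambda}$ are bounded uniformly in $x$ (with constants depending on $\lambda$), each factor $D^{\alpha-\beta}\phi_{x,\lambda}$ is supported in $B_{2\lambda}(x)$, and one dominates $\mathbf 1_{B_{2\lambda}(x)}\,D^{\alpha-\beta}f$ in $L^2$ by a fixed multiple of a slightly larger bump times $D^{\alpha-\beta}f$, then uses the scaling step just proved. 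For the reverse inequality one chooses an auxiliary bump $\tilde\phi\in C_c^\infty$ with $\tilde\phi\equiv 1$ on $B_{2\lambda}(0)$, writes $\phi_{x,\lambda}D^\alpha f=\sum_{\beta\le\alpha}\binom{\alpha}{\beta}(-1)^{|\beta|}D^\beta\big((D^{\alpha-\beta}\phi_{x,\lambda})\cdot\tilde\phi_{x,2\lambda}f\big)$ minus lower-order terms coming from differentiating $\tilde\phi_{x,2\lambda}$ (which vanish where $\phi_{x,\lambda}$ is supported), so that $\|\phi_{x,\lambda}D^\alpha f\|_{L^2}\lesssim\|\tilde\phi_{x,2\lambda}f\|_{H^s}$, and again invoke scaling equivalence to pass to the $\lambda$-scale. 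I expect the main obstacle to be purely bookkeeping: keeping track, in this last step, of the commutators between $D^\alpha$ and multiplication by $\phi_{x,\lambda}$ and verifying that every constant is independent of $x$ (it is, by translation invariance) and depends on $\lambda,s,d$ only—nothing deep is involved, but the Leibniz expansions must be organized carefully so that no term of order higher than $s$ in $f$ appears.
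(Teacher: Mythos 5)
The paper offers no proof of Proposition~\ref{equivnorms} at all---it simply points to \cite{Kato}---so your self-contained covering-plus-Leibniz argument is the natural thing to supply, and it is essentially the standard proof. Your first two steps are correct: the two-sided comparison of $\sup_x\|\phi_x f\|_{L^p}$ with \eqref{Lpulnorm} via a finite cover of $B_2(x)$ by unit balls and the fact that $\phi\equiv 1$ on $B_1(0)$, and the scale-independence by the same covering device, which then gives the equivalence of \eqref{Hsullambda} with \eqref{Hsulnorm} termwise in $D^\alpha f$. Two cosmetic points: for the ``reverse'' scale comparison you need finitely many balls of radius $\lambda_2$ unless $\lambda_2\ge 2\lambda_1$ (same argument, not literally a single ball), and in the transfer-of-derivatives identity the sign is $(-1)^{|\alpha-\beta|}$, i.e.\ $\chi D^\alpha f=\sum_{\beta\le\alpha}\binom{\alpha}{\beta}(-1)^{|\alpha-\beta|}D^{\beta}\big((D^{\alpha-\beta}\chi)f\big)$; also the auxiliary cutoff can simply be taken to be $\phi_{x,2\lambda}$, which is identically $1$ on $B_{2\lambda}(x)\supset\supp\phi_{x,\lambda}$, so no extra ``lower-order terms'' appear.

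The one step that does not close as written is the final sentence of the last equivalence: ``again invoke scaling equivalence to pass to the $\lambda$-scale.'' At that point you have $\sum_{|\alpha|\le s}\sup_x\|\phi_{x,\lambda}D^\alpha f\|_{L^2}\le C\sup_x\|\phi_{x,2\lambda}f\|_{H^s}$ and must compare the right-hand side with $\sup_x\|\phi_{x,\lambda}f\|_{H^s}$. But the scale equivalence you proved concerns the norms $\sup_x\|\phi_{x,\mu}f\|_{L^p}$ of a fixed $f$; it says nothing about $\sup_x\|\phi_{x,\mu}f\|_{H^s}$ as $\mu$ varies, and if you try to reach it by expanding $\|\phi_{x,2\lambda}f\|_{H^s}$ with Leibniz and applying the $L^2$-level equivalence termwise, you land back on $\|f\|_{H^s_{ul,\lambda}}$, which is circular. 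The repair is a one-line reordering using only what you already have: take the \emph{inner} bump at scale $\lambda/2$ and the outer one to be $\phi_{x,\lambda}$ itself. Since $\phi_{x,\lambda}\equiv 1$ on $B_\lambda(x)$, an open neighborhood of $\supp\phi_{x,\lambda/2}$, one has $\phi_{x,\lambda/2}D^\alpha f=\phi_{x,\lambda/2}D^\alpha(\phi_{x,\lambda}f)$, hence $\|\phi_{x,\lambda/2}D^\alpha f\|_{L^2}\le C\|\phi_{x,\lambda}f\|_{H^s}$ for all $|\alpha|\le s$; now your (genuinely proved) $L^2$-level scale equivalence, applied termwise to each $D^\alpha f$, converts $\sum_{|\alpha|\le s}\sup_x\|\phi_{x,\lambda/2}D^\alpha f\|_{L^2}$ into $\|f\|_{H^s_{ul,\lambda}}$, giving $\|f\|_{H^s_{ul,\lambda}}\le C\sup_x\|\phi_{x,\lambda}f\|_{H^s}$ as desired. (Alternatively one can prove the $H^s$-level scale comparison directly with a partition of unity subordinate to a cover of $B_{4\lambda}(x)$ by balls of radius $\lambda$, using that multiplication by functions with uniformly bounded $\tilde{C}^s$ norms is bounded on $H^s$, but the reordering above is shorter.)
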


We now state a few useful lemmas regarding $H^s_{ul}$ spaces.  Several of these lemmas demonstrate that many properties of $H^s$ spaces extend to the $H^s_{ul}$ spaces.  We begin with the following Calculus inequalities.   Parts $(i)$ and $(iii)$ below can be found in \cite{Majda}.
\begin{lemma}\label{commutator}
Assume $s \ge 1$ is an integer.

\noindent (i)  Given $f$, $g\in H^s_{ul} \cap L^{\infty}(\R^d)$ and $|\alpha|\leq s$,
\begin{equation*}
\| D^{\alpha}(fg) \|_{L^2_{ul}} \leq C_{s} (\| f \|_{L^{\infty}}\| g \|_{H^{s}_{ul}}  + \| g \|_{L^{\infty}}\| f \|_{H^{s}_{ul}} ).  
\end{equation*} 
(ii) Given $f \in \tilde{C}^s(\R^d)$, $g \in H^s(\R^d)$,
\begin{equation*}
	\norm{fg}_{H^s} \le C \smallnorm{{f}}_{\tilde{C}^s} \norm{g}_{H^s(\supp f)}, \quad
	\norm{fg}_{H^s_{ul}} \le C \smallnorm{{f}}_{\tilde{C}^s} \norm{g}_{H^s_{ul}}.
\end{equation*}
(iii)  Given $f\in H^s_{ul}\cap \tilde{C}^1(\R^d)$ and $g\in H^{s-1}_{ul}\cap L^{\infty}(\R^d)$, for $|\alpha| \leq s$,
\begin{equation*}
\| D^{\alpha}(fg) - fD^{\alpha}g \|_{L^2_{ul}} \leq C_{s} (\| f \|_{\tilde{C}^1}\| g \|_{H^{s-1}_{ul}}  + \| g \|_{L^{\infty}}\| f \|_{H^{s}_{ul}} ).  
\end{equation*}
\end{lemma}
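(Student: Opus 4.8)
\textbf{Proof plan for Lemma \ref{commutator}.}

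The plan is to reduce each of the three estimates to the corresponding classical statement on $H^s(\R^d)$ (the Moser-type product estimate and the Kato--Ponce commutator estimate), and then globalize using the equivalence of norms from Proposition \ref{equivnorms}. For part (ii), I would use the localized norm $\norm{fg}_{H^s_{ul}} = \sup_x \norm{\phi_x f g}_{H^s}$. Since $f \in \tilde C^s$, the product $\phi_x f$ lies in $\tilde C^s$ with $\smallnorm{\phi_x f}_{\tilde C^s} \le C \smallnorm{\phi}_{\tilde C^s}\smallnorm{f}_{\tilde C^s}$ uniformly in $x$, and $\supp(\phi_x f) \subset B_2(x)$. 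Choosing a slightly larger bump $\widetilde\phi_x$ that is $1$ on $B_2(x)$, write $\phi_x f g = (\phi_x f)(\widetilde\phi_x g)$ and apply the classical estimate $\norm{uv}_{H^s} \le C(\smallnorm{u}_{\tilde C^s}\norm{v}_{H^s} + \smallnorm{v}_{\tilde C^s}\norm{u}_{H^s})$, or more simply the version $\norm{uv}_{H^s}\le C\smallnorm{u}_{\tilde C^s}\norm{v}_{H^s}$ when one factor is in $\tilde C^s$; this gives $\norm{\phi_x f g}_{H^s} \le C\smallnorm{f}_{\tilde C^s}\norm{\widetilde\phi_x g}_{H^s} \le C\smallnorm{f}_{\tilde C^s}\norm{g}_{H^s_{ul}}$, and taking the supremum over $x$ yields the claim. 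The first inequality in (ii) is the same computation without the supremum, using that $g$ restricted to $\supp f$ controls $\norm{fg}_{H^s}$.

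For part (i), I would argue similarly: fix $x$ and $|\alpha|\le s$, and estimate $\norm{\phi_x D^\alpha(fg)}_{L^2}$. Since $D^\alpha(fg) = \sum_{\beta\le\alpha}\binom{\alpha}{\beta} D^\beta f\, D^{\alpha-\beta}g$, it suffices to bound each $\norm{\phi_x (D^\beta f)(D^{\alpha-\beta} g)}_{L^2}$. Enlarging $\phi_x$ to $\widetilde\phi_x\equiv 1$ on $B_2(x)$, this equals $\norm{\phi_x (D^\beta(\widetilde\phi_x f))(D^{\alpha-\beta}(\widetilde\phi_x g))}_{L^2}$ up to terms where derivatives fall on $\widetilde\phi_x$ (those are harmless and handled identically), so the classical Moser inequality applied to the compactly supported functions $\widetilde\phi_x f, \widetilde\phi_x g \in H^s\cap L^\infty$ gives $\norm{D^\alpha((\widetilde\phi_x f)(\widetilde\phi_x g))}_{L^2} \le C_s(\norm{\widetilde\phi_x f}_{L^\infty}\norm{\widetilde\phi_x g}_{H^s} + \norm{\widetilde\phi_x g}_{L^\infty}\norm{\widetilde\phi_x f}_{H^s})$. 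Bounding $\norm{\widetilde\phi_x f}_{L^\infty}\le\norm{f}_{L^\infty}$, $\norm{\widetilde\phi_x g}_{H^s}\le C\norm{g}_{H^s_{ul}}$, and symmetrically, then taking $\sup_x$, yields (i). Part (iii) follows the same template starting from the algebraic identity $D^\alpha(fg) - fD^\alpha g = \sum_{0<\beta\le\alpha}\binom{\alpha}{\beta}D^\beta f\,D^{\alpha-\beta}g$ and invoking the classical Kato--Ponce commutator estimate (or just the Moser estimate applied term by term, noting each term has at least one derivative on $f$, so $\norm{D^\beta f}$ with $|\beta|\ge 1$ is controlled by $\norm{f}_{\tilde C^1}$ when $|\beta|=1$ and by $\norm{f}_{H^s_{ul}}$ when $|\beta|\ge 1$, interpolating appropriately) on the localized pieces $\widetilde\phi_x f, \widetilde\phi_x g$.

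The one point requiring care—and the main technical obstacle—is the bookkeeping of the cutoff functions: when I replace $f$ by $\widetilde\phi_x f$ inside a derivative, I generate commutator terms where $D$ hits $\widetilde\phi_x$, and I must check that every such term is of strictly lower order in the "good" factor and hence absorbable into the stated right-hand side with a constant depending only on $\phi$ and $s$ (in particular uniform in $x$, which holds because all translates of $\phi$ have the same derivatives). A clean way to organize this is to fix once and for all a pair of nested bumps $\phi \prec \widetilde\phi$ with $\widetilde\phi \equiv 1$ on $\supp\phi$, observe $\phi_x f = \phi_x(\widetilde\phi_x f)$ exactly, and only ever differentiate the product $\phi_x\cdot(\text{something involving }\widetilde\phi_x f,\widetilde\phi_x g)$, so that the Leibniz expansion keeps all derivatives on factors that are already globally controlled; the $L^\infty$ and $H^s$ norms of $\widetilde\phi_x f$ are then dominated by $\norm{f}_{L^\infty}$ and $C\norm{f}_{H^s_{ul}}$ respectively, uniformly in $x$, by Proposition \ref{equivnorms}. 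Since parts (i) and (iii) are cited from \cite{Majda}, the write-up can be brief, with part (ii) the only one needing a full (but short) argument along these lines.
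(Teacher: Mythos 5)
Your plan is correct, but it is worth noting that the paper itself does not prove Lemma \ref{commutator}: parts (i) and (iii) are simply cited from \cite{Majda}, and part (ii) is stated without argument, so your write-up is a genuinely self-contained alternative rather than a reproduction of the paper's proof. Your route --- fix nested bumps $\phi \prec \widetilde\phi$, replace $f,g$ by $\widetilde\phi_x f,\widetilde\phi_x g$ inside the classical Moser product estimate, the elementary Leibniz bound $\norm{uv}_{H^s}\le C\smallnorm{u}_{\tilde{C}^s}\norm{v}_{H^s}$ (valid since $s$ is an integer), and the Klainerman--Majda/Kato--Ponce commutator estimate, then pass to $\sup_x$ via Proposition \ref{equivnorms} --- is exactly the standard mechanism by which uniformly local calculus inequalities are deduced from their $H^s$ counterparts, and all the individual bounds you invoke ($\norm{\widetilde\phi_x f}_{L^\infty}\le\norm{f}_{L^\infty}$, $\norm{\widetilde\phi_x g}_{H^s}\le C\norm{g}_{H^s_{ul}}$, uniformity in $x$ by translation invariance) are legitimate. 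One simplification: the ``main technical obstacle'' you flag, the commutator terms in which derivatives fall on $\widetilde\phi_x$, disappears entirely if you choose $\widetilde\phi_x\equiv 1$ on a neighborhood of $\supp\phi_x$ (say $\widetilde\phi_x=\phi_{x,4}$), since then every Leibniz term of $D^\alpha(fg)$, and likewise of the commutator in (iii), coincides with the corresponding term for $(\widetilde\phi_x f)(\widetilde\phi_x g)$ pointwise on $\supp\phi_x$, so $\phi_x D^\alpha(fg)=\phi_x D^\alpha\bigl((\widetilde\phi_x f)(\widetilde\phi_x g)\bigr)$ exactly and no error terms are generated; the classical estimates then apply verbatim to the compactly supported localizations. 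In (iii), prefer the classical commutator estimate over the ``term-by-term Moser plus interpolation'' parenthetical, since the intermediate terms $D^\beta f\,D^{\alpha-\beta}g$ with $1<|\beta|<|\alpha|$ are not individually controlled by the stated right-hand side without a Gagliardo--Nirenberg argument that the commutator estimate already packages for you.
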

\begin{lemma}\label{SobolevEmbedding}
(\cite{Kato})  Let $j$ and $m$ be nonnegative real numbers.  If $2m>d$, then $H^{j+m}_{ul}(\R^d)\hookrightarrow \tilde{C}^j(\R^d)$. 
\end{lemma}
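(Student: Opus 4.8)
The plan is to deduce the uniformly local embedding from the classical Sobolev embedding on $\R^d$ together with the localized description of the $H^s_{ul}$ norm in Proposition~\ref{equivnorms}. Set $s = j + m$. By Proposition~\ref{equivnorms}, the norm $\sup_{x \in \R^d} \|\phi_{x,2} f\|_{H^s}$ is equivalent to $\|f\|_{H^s_{ul}}$, where, by the choice $\lambda = 2$ in \eqref{bumpnotation}, $\phi_{x,2}$ is identically $1$ on $B_2(x)$ and supported in $B_4(x)$. Since $2m > d$, we have $s = j + m > j + \tfrac{d}{2}$, so the classical Sobolev embedding $H^s(\R^d) \hookrightarrow \tilde{C}^j(\R^d)$ applies to each $\phi_{x,2} f \in H^s(\R^d)$, giving
\[
\|\phi_{x,2} f\|_{\tilde{C}^j} \le C \|\phi_{x,2} f\|_{H^s} \le C \|f\|_{H^s_{ul}},
\]
with $C$ independent of $x$.

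Next I would patch these local bounds into a global $\tilde{C}^j$ bound. Write $j = k + \sigma$ with $k = [j]$ and $\sigma \in [0,1)$. For $|\alpha| \le k$ and any $x_0 \in \R^d$, the fact that $\phi_{x_0,2} \equiv 1$ on a neighborhood of $x_0$ gives $D^\alpha f(x_0) = D^\alpha(\phi_{x_0,2} f)(x_0)$ (in particular $f$ is genuinely of class $C^k$, being locally equal to the function $\phi_{x_0,2} f \in \tilde{C}^j$), whence $\|D^\alpha f\|_{L^\infty} \le \sup_{x_0} \|\phi_{x_0,2} f\|_{\tilde{C}^j} \le C\|f\|_{H^s_{ul}}$. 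When $\sigma > 0$, it remains to bound $[D^\alpha f]_{C^\sigma}$ for $|\alpha| = k$: given $x \ne y$, if $|x - y| \ge 1$ then $|D^\alpha f(x) - D^\alpha f(y)| / |x-y|^\sigma \le 2\|D^\alpha f\|_{L^\infty} \le C\|f\|_{H^s_{ul}}$, while if $|x-y| < 1$ then both points lie in $B_2(x)$, where $\phi_{x,2} \equiv 1$, so the difference quotient equals that of $D^\alpha(\phi_{x,2} f)$ and is $\le [D^\alpha(\phi_{x,2} f)]_{C^\sigma} \le \|\phi_{x,2} f\|_{\tilde{C}^j} \le C\|f\|_{H^s_{ul}}$. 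Taking the supremum over $x \ne y$ and summing over $\alpha$ yields $\|f\|_{\tilde{C}^j} \le C\|f\|_{H^s_{ul}}$.

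The argument is soft and I expect no real obstacle beyond bookkeeping. The only point requiring mild care is the Hölder seminorm: one must split into the regimes $|x-y| \ge 1$ (absorbed by the $L^\infty$ bound on $D^\alpha f$ already established) and $|x-y| < 1$ (handled by a single cutoff $\phi_{x,2}$ centered at $x$), and for the latter it is convenient that the cutoff be identically $1$ on a ball of radius larger than $1$ -- which is harmless precisely because Proposition~\ref{equivnorms} guarantees the dilated norm $\sup_x\|\phi_{x,2}f\|_{H^s}$ is equivalent to the standard one. The same proof works if one wishes to allow non-integer $s = j+m$, interpreting $H^s_{ul}$ through the equivalent norm $\sup_x \|\phi_x f\|_{H^s}$ with $H^s(\R^d)$ the usual Bessel-potential space.
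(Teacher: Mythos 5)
Your proof is correct. The paper itself gives no argument for this lemma --- it simply cites Kato --- and what you have written is exactly the standard localization proof underlying that citation: the norm equivalence of Proposition~\ref{equivnorms} with the dilated cutoffs $\phi_{x,2}$, the classical embedding $H^{s}(\R^d)\hookrightarrow \tilde{C}^j(\R^d)$ for $s=j+m>j+d/2$ applied to each $\phi_{x,2}f$, and the patching of the local bounds, with the only delicate point (the H\"older seminorm, split into $|x-y|\ge 1$ and $|x-y|<1$) handled correctly by choosing the cutoff to be identically $1$ on a ball of radius larger than $1$. Your closing caveat is also apt: since the paper defines $H^s_{ul}$ only for integer $s$, the statement for non-integer $j+m$ should indeed be read through the equivalent norm $\sup_x\|\phi_x f\|_{H^{s}}$, which is how the lemma is used in the body of the paper anyway (there $s$ is an integer and $j=1$).
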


\begin{lemma}\label{smoothcutoffbounded}
    Let $p\in[1,\infty)$, and assume $f$ belongs to $L_{ul}^p(\R^d)$. There exists $C>0$ such that for all $n\in\N$,
    \[\|S_n f\|_{L^p_{ul}} \leq C\|f\|_{L^p_{ul}}.\] 
\end{lemma}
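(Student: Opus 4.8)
The plan is to reduce the statement to a kernel estimate for the Fourier multiplier $\chi_n$ together with the elementary observation that a translate of the $L^p_{ul}$ bump-function norm is controlled by the original. Recall $S_n f = \chi_n * f$, where $\widehat{\chi_n}(\xi) = \widehat{\chi}(\xi) + \sum_{j\le n}\widehat{\varphi}_j(\xi)$. The crucial point is that $\chi_n(x) = 2^{nd}\,\chi_0(2^n x)$ for a fixed Schwartz function $\chi_0 \in \mathcal{S}(\R^d)$ (this is immediate from the dilation structure of the Littlewood--Paley pieces, since $\widehat{\chi_n}(\xi) = \widehat{\chi_0}(2^{-n}\xi)$). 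In particular, for any $N$ there is $C_N$ independent of $n$ with
\begin{equation*}
|\chi_n(x)| \le \frac{C_N\, 2^{nd}}{(1 + 2^n|x|)^N},
\qquad
\int_{\R^d} |\chi_n(y)|\, dy \le C,
\end{equation*}
where the last bound (with $N > d$) is uniform in $n$ because it is scale-invariant.

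First I would fix $x\in\R^d$ and, using Proposition \ref{equivnorms}, estimate $\|\phi_x\, S_n f\|_{L^p}$ where $\phi_x(y) = \phi(y-x)$. Split $f = \phi_{x,3} f + (1 - \phi_{x,3}) f$ using the dilated bump from \eqref{bumpnotation} (say $\lambda = 3$, so $\phi_{x,3}$ is $1$ on $B_3(x)$ and supported in $B_6(x)$), and correspondingly $S_n f = \chi_n * (\phi_{x,3} f) + \chi_n * ((1-\phi_{x,3})f)$. For the near piece, $\phi_{x,3} f \in L^p$ with $\|\phi_{x,3} f\|_{L^p} \le C\|f\|_{L^p_{ul}}$ by Proposition \ref{equivnorms}, so by Young's inequality and $\|\chi_n\|_{L^1}\le C$,
\begin{equation*}
\|\phi_x\,(\chi_n * (\phi_{x,3} f))\|_{L^p} \le \|\chi_n * (\phi_{x,3} f)\|_{L^p} \le C\,\|f\|_{L^p_{ul}},
\end{equation*}
uniformly in $n$ and $x$. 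For the far piece, the key point is that on $\supp \phi_x \subset B_2(x)$ the convolution against $(1-\phi_{x,3})f$ only sees points $y$ with $|y-x|\ge 3$, so the kernel is evaluated at distance $\ge 1$; using the fast decay of $\chi_n$ one gets pointwise, for $z \in B_2(x)$,
\begin{equation*}
|\chi_n * ((1-\phi_{x,3})f)(z)| \le \int_{|y-x|\ge 3} |\chi_n(z-y)|\,|f(y)|\,dy
\le C \sum_{k\ge 1} \sup_{|w|\le 2^{N}}\!\Big(\!\!\!\int\limits_{\substack{|y-x|\in[2^{k},2^{k+1}]}}\!\!\!\!|f(y)|\,dy\Big)\cdot \frac{2^{nd}}{(1+2^n 2^{k})^N},
\end{equation*}
and each annular integral of $|f|$ is bounded by $C\,2^{kd}\|f\|_{L^p_{ul}}$ (using $|f| \le |f|$ on unit balls and Hölder, summing $O(2^{kd})$ unit balls), while the tail $\sum_{k\ge 1} 2^{kd} 2^{nd}(1+2^n 2^k)^{-N}$ converges to a constant independent of $n$ once $N > 2d$. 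Taking the $L^p$ norm over $B_2(x)$ (a set of bounded measure) then gives the same bound $C\|f\|_{L^p_{ul}}$. Combining the two pieces and taking the supremum over $x$ yields $\|S_n f\|_{L^p_{ul}} \le C\|f\|_{L^p_{ul}}$.

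The main obstacle is purely bookkeeping: making the far-field tail estimate clean, i.e.\ choosing the decay exponent $N$ large enough and organizing the dyadic-annulus sum so that the bound is manifestly uniform in $n$ (the scale $2^{-n}$ of the kernel shrinks, but since we only integrate over the region $|y - x| \gtrsim 1$, the concentration of $\chi_n$ actually helps rather than hurts). A slightly slicker alternative that avoids the annular decomposition is to write $\chi_n * ((1-\phi_{x,3}) f)$ and dominate $|f|$ directly by the $L^p_{ul}$-controlled function $\sum_{k\in\Z^d}\|f\mathbf{1}_{Q_k}\|_{L^p}\,\mathbf{1}_{Q_k}$ over a unit-cube partition $\{Q_k\}$, reducing the far estimate to the convergence of $\sum_k \sup_{x}\int_{Q_k} |\chi_n(x-y)|\,dy$, which again follows from $|\chi_n| \le C_N 2^{nd}(1+2^n|\cdot|)^{-N}$ with $N>d$ and is uniform in $n$ by scaling. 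Either way the proof is elementary once the self-similar form $\chi_n(x) = 2^{nd}\chi_0(2^n x)$ is recorded.
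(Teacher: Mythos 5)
Your argument is correct, but it is not the route the paper takes. You prove the bound by a near/far decomposition: split $f = \phi_{x,3}f + (1-\phi_{x,3})f$, handle the near piece by Young's inequality with the uniform bound $\|\chi_n\|_{L^1}\le C$, and handle the far piece by the rapid decay $|\chi_n(x)|\le C_N 2^{nd}(1+2^n|x|)^{-N}$ together with a dyadic-annulus sum. The paper instead disposes of the whole thing in one step: writing $\|S_nf\|_{L^p_{ul}}=\sup_z\|\phi_z(\chi_n * f)\|_{L^p}$, it applies Minkowski's integral inequality to get $\|\phi_z(\chi_n*f)\|_{L^p}\le \int |\chi_n(y)|\,\|\phi_z f(\cdot-y)\|_{L^p}\,dy$, and then uses that $\|\phi_z f(\cdot-y)\|_{L^p}\le \|f\|_{L^p_{ul}}$ uniformly in $y$ and $z$ (translation invariance of the uniformly local norm), so only the uniform $L^1$ bound $\int|\chi_n|\le C$ is needed — no spatial splitting and no pointwise decay of the kernel beyond integrability. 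Both arguments ultimately rest on the same scaling fact you record, namely that $\chi_n$ is an $L^1$-normalized dilate of a fixed Schwartz function (this is what makes $\|\chi_n\|_{L^1}$ independent of $n$ in the paper's proof too, with the standard convention $\hat{\chi}_n(\xi)=\hat{\chi}(2^{-n}\xi)$ up to an index shift), so your extra machinery buys nothing here, though it would be the right tool if one only knew polynomial decay of the kernels rather than their dilation structure. Two small blemishes in your write-up: the factor $\sup_{|w|\le 2^N}$ in your far-field display is vacuous (the quantity inside does not depend on $w$) and should be deleted, and you should state explicitly that $|z-y|\ge \tfrac12\,2^{k}$ (up to a harmless constant at $k=1$) for $z\in B_2(x)$ and $|y-x|\in[2^k,2^{k+1}]$, which is what lets you evaluate the kernel bound at scale $2^k$.
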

\begin{proof}
    By Minkowski's inequality, 
    $$\|S_n f\|_{L^p_{ul}}=\sup\limits_{z\in\R^d}\Bigg(\int\limits_{\R^d}\Big|\int\limits_{\R^d}\phi_z(x) f(x-y) \chi_n(y) dy\Big|^pdx\Bigg)^{1/p}$$
    $$\leq \sup\limits_{z\in\R^d}\int\limits_{\R^d}\|\phi_z(\cdot) f(\cdot-y) \chi_n(y)\|_{L^p}dy =  \sup\limits_{z\in\R^d}\int\limits_{\R^d}\|\phi_z(\cdot) f(\cdot-y) \|_{L^p} |\chi_n(y)| dy$$
    $$\leq \|f\|_{L^p_{ul}}\int_{\R^d} |\chi_n(y)|dy\leq C\|f\|_{L^p_{ul}}.$$ 
\end{proof}

\begin{lemma}\label{L:ConvHsBound}
	With $\Psi$ as in Lemma \ref{L:CZLike},
	for any $f \in H^r_{ul}(\R^d)$, $r \ge 0$,
	\begin{align*}
		\norm{\grad ((a_\lambda \Psi) * f)}_{H^r_{ul}}
			&\le C \lambda \norm{f}_{H^r_{ul}}.
	\end{align*}
\end{lemma}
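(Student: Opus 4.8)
The plan is to prove this as the $H^r_{ul}$ analogue of Lemma \ref{L:CZLike}, localizing with the bump functions of Proposition \ref{equivnorms} so as to reduce to a genuine $L^2(\R^d)$ \Calderon-Zygmund estimate. Since $H^s_{ul}(\R^d)$ is only defined for nonnegative integers, I may assume $r\in\N$. I would begin from the equivalent norm $\norm{g}_{H^r_{ul}}\approx\sup_{x\in\R^d}\norm{\phi_x g}_{H^r}$ and fix $x$. Because $a_\lambda\Psi$ is supported in the ball of radius $2\lambda$ about the origin, the value of $\grad((a_\lambda\Psi)*f)$ on $\supp\phi_x$ depends only on $f$ restricted to $B_{2+2\lambda}(x)$; so, choosing $\psi_x\in C_c^\infty(\R^d)$ equal to $1$ on $B_{2+2\lambda}(x)$ with $\smallnorm{\psi_x}_{\tilde C^r}$ bounded uniformly in $x$, I get $\phi_x\grad((a_\lambda\Psi)*f)=\phi_x\grad((a_\lambda\Psi)*(\psi_x f))$. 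By Lemma \ref{commutator}(ii) (trivially when $r=0$), $\norm{\phi_x\,G}_{H^r}\le C\norm{G}_{H^r}$, so it suffices to bound $\norm{\grad((a_\lambda\Psi)*(\psi_x f))}_{H^r}$ for the honest Sobolev function $\psi_x f$ and then take the supremum over $x$, using that $\sup_x\norm{\psi_x f}_{H^r}\le C\lambda\norm{f}_{H^r_{ul}}$ by Proposition \ref{equivnorms} together with a covering of $\supp\psi_x$ by unit balls.

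For the remaining $H^r(\R^d)$ bound I would, for each $\abs{\alpha}\le r$, move all of $D^\alpha$ onto $\psi_x f$ using Remark \ref{R:Convolutions}, so that $D^\alpha\grad((a_\lambda\Psi)*(\psi_x f))=\grad(a_\lambda\Psi)*D^\alpha(\psi_x f)$; this reduces everything to the single inequality $\norm{\grad(a_\lambda\Psi)*h}_{L^2(\R^d)}\le C\norm{h}_{L^2(\R^d)}$. Here I would argue exactly as in the proof of Lemma \ref{L:CZLike}: writing $\grad(a_\lambda\Psi)=a_\lambda\grad\Psi+\Psi\,\grad a_\lambda$, the piece $\Psi\,\grad a_\lambda$ lies in $L^1(\R^d)$ with norm controlled as in \eqref{e:GPhiBounds}, so convolution with it is bounded on $L^2$ by Young's inequality, while $a_\lambda\grad\Psi$ is a smoothly truncated \Calderon-Zygmund kernel, so (as recorded via Proposition 6.1 of \cite{BK}) convolution with it differs from a principal-value \Calderon-Zygmund operator by a bounded operator, and both are bounded on $L^2(\R^d)$ by \cite{Stein}. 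Summing over $\abs{\alpha}\le r$ and combining with the first paragraph yields $\norm{\grad((a_\lambda\Psi)*f)}_{H^r_{ul}}\le C\lambda\norm{f}_{H^r_{ul}}$, the factor $\lambda$ coming from the $L^1$ estimate \eqref{e:GPhiBounds} and from the $\lambda$-sized support entering the localization.

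I expect the only non-routine ingredient to be this last $L^2(\R^d)$ step, and in particular the observation -- already made in the proof of Lemma \ref{L:CZLike} -- that $\grad(a_\lambda\Psi)*$ is not literally a \Calderon-Zygmund operator but differs from one by a bounded operator; I would simply invoke that decomposition rather than reprove it. Everything else (the reduction to integer $r$, passing between the equivalent $H^r_{ul}$ norms, moving derivatives through the compactly supported convolution, and the covering argument turning $\norm{\psi_x f}_{H^r}$ into a multiple of $\norm{f}_{H^r_{ul}}$) parallels the preliminary lemmas of Section \ref{preliminary} and is routine bookkeeping.
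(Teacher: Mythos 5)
Your proposal is correct and follows essentially the same route as the paper's (very terse) proof: establish the $H^r$ bound from the expression recorded in Lemma \ref{L:CZLike} (principal-value \Calderon-Zygmund part plus a multiple of the identity, after splitting $\grad(a_\lambda\Psi)=a_\lambda\grad\Psi+\Psi\,\grad a_\lambda$), and then pass to $H^r_{ul}$ by exploiting the compact support of $a_\lambda\Psi$ through the identity $\phi_x\,\grad((a_\lambda\Psi)*f)=\phi_x\,(\grad(a_\lambda\Psi)*\phi_{x,8}f)$, which is exactly your $\psi_x$ localization. The only discrepancy is bookkeeping in the $\lambda$-dependence (your covering argument yields a factor $\lambda^{d/2}$, e.g.\ $\lambda^{3/2}$ when $d=3$, rather than $\lambda$), a point the paper's own sketch does not track either and which is immaterial in the applications, where $\lambda=1$.
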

\begin{proof}
	This follows for $H^r$ in place of $H^r_{ul}$
	from a Littlewood-Paley decomposition or by using the expression
	in Lemma \ref{L:CZLike}. It then follows for the $H^r_{ul}$ norm
	by taking advantage of the identity, 
	$\phi_x \grad ((a_\lambda \Psi) * f) =
	\phi_x (\nabla(a_{\lambda} \Psi) \ast f) = \phi_x (\nabla(a_{\lambda} \Psi) \ast \phi_{x,8} f)$.
\end{proof}

\begin{definition}\label{D:stardot}
	For $v$, $w$ vector fields, we define
	$
		v \stardot w
			= v^i * w^i,
	$
	where we sum over the repeated indices. Similarly,
	for $A$, $B$ matrix-valued functions on $\R^d$, we define
	$
		A \stardot B
			= A^{ij} * B^{ij}.
	$
\end{definition}

In Lemma \ref{L:Stream}, we obtain a \textit{stream function} for $\psi$, but it is not the classical stream function in that it is not divergence-free. It can be written in the form of a one-dimensional integral, however, as in (\ref{e:StreamSimplified}), which makes it amenable to localized estimates.

\begin{lemma}\label{L:Stream}
	For any divergence-free $u \in H^s_{ul}(\R^3)$ there exists a
	(non-divergence free)
	\textit{stream function} $\psi \in H^{s + 1}_{ul}(\R^3)$ with
	the properties that $\curl \psi = u$, 
	$\psi(0) = 0$.
	For any bounded convex $U \subseteq B_R(0)$,
	\begin{align}\label{e:StreamBound}
		\norm{\psi}_{H^s(U)}
			\le C R \norm{u}_{H^s_{ul}(\R^3)},
	\end{align}
	where the constant $C$ depends upon the Lebesgue measure, $\abs{U}$, of $U$.
\end{lemma}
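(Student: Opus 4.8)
The plan is to take for $\psi$ the vector potential of $u$ furnished by the homotopy formula in the Poincar\'e lemma. Since $s\ge 3$, Lemma~\ref{SobolevEmbedding} gives $u\in\tilde C^1(\R^3)$, so one may set
\[
	\psi(x) \;=\; \int_0^1 t\,u(tx)\times x\,dt ,
\]
which is then $C^1$ and satisfies $\psi(0)=0$. Writing $x=\rho\omega$ and substituting $r=t\rho$ recasts this as the one-dimensional integral
\[
	\psi(\rho\omega) \;=\; \frac1\rho\Big(\int_0^\rho r\,u(r\omega)\,dr\Big)\times\omega ,
\]
that is, (\ref{e:StreamSimplified}); in particular $\psi(x)$ depends only on the values of $u$ on the segment $[0,x]$, which is what makes localized estimates possible.

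First I would check $\curl\psi=u$. Differentiating under the integral sign,
\[
	\partial_m\psi_i(x) \;=\; \int_0^1 t^2\,\epsilon_{ijk}(\partial_m u_j)(tx)\,x_k\,dt \;+\; \int_0^1 t\,\epsilon_{ijm}u_j(tx)\,dt .
\]
Forming the curl and using $\epsilon_{\ell mi}\epsilon_{ijk}=\delta_{\ell j}\delta_{mk}-\delta_{\ell k}\delta_{mj}$, $\epsilon_{mi\ell}\epsilon_{mij}=2\delta_{\ell j}$, and the hypothesis $\dv u=0$, the first integral reduces to $\int_0^1 t^2\,x_k(\partial_k u_\ell)(tx)\,dt=\int_0^1 t^2\,\tfrac{d}{dt}\big(u_\ell(tx)\big)\,dt$; an integration by parts in $t$ turns this into $u_\ell(x)-2\int_0^1 t\,u_\ell(tx)\,dt$, and the last term cancels the contribution of the second integral above, leaving $(\curl\psi)_\ell=u_\ell$. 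This homotopy-formula computation is the only place where $\dv u=0$ is used.

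For the bound (\ref{e:StreamBound}) I would apply the Leibniz rule inside the integral; since $D^\beta_x(x_k)$ vanishes for $|\beta|\ge 2$,
\[
	D^\alpha\psi_i(x) \;=\; \int_0^1 t^{|\alpha|+1}\epsilon_{ijk}(D^\alpha u_j)(tx)\,x_k\,dt \;+\; \sum_m\alpha_m\int_0^1 t^{|\alpha|}\epsilon_{ijm}(D^{\alpha-e_m}u_j)(tx)\,dt,
\]
so every term carries at most $|\alpha|\le s$ derivatives of $u$ (hence functions in $L^2_{ul}$) and at most one factor of $x$, bounded by $R$ on $U\subseteq B_R(0)$. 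By Minkowski's integral inequality it then suffices to control, for $|\beta|\le s$ and $t\in(0,1]$, the quantity $\|(D^\beta u)(t\cdot)\|_{L^2(U)}=t^{-3/2}\|D^\beta u\|_{L^2(tU)}$; and since $tU$ has diameter at most $\operatorname{diam}U$, it is covered by a number of unit balls depending only on $U$, uniformly in $t$ and in $R$, so $\|D^\beta u\|_{L^2(tU)}\le C\|u\|_{H^s_{ul}}$. The residual powers of $t$ are $t^{|\alpha|-1/2}$ and $t^{|\alpha|-3/2}$ (the latter only for $|\alpha|\ge 1$), each at least $t^{-1/2}$, so the $t$-integrals converge; summing over $|\alpha|\le s$ yields $\|\psi\|_{H^s(U)}\le CR\|u\|_{H^s_{ul}}$ with a constant depending only on the size of $U$.

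What remains is the assertion $\psi\in H^{s+1}_{ul}(\R^3)$, which reflects a gain of one derivative that the line-integral formula does not itself provide (differentiating it $s+1$ times would require $D^{s+1}u$). To recover it I would leave the explicit formula behind and use the structural identities: contracting the expression for $\partial_m\psi_i$ gives $\dv\psi=\int_0^1 t^2\,x\cdot\omega(tx)\,dt$ with $\omega=\curl u$, and then $\Delta\psi=\nabla(\dv\psi)-\curl u$; after arranging that the divergence is well behaved (directly, or by replacing $\psi$ with $\psi+\nabla\chi$ for a Serfati-type localized potential $\chi$), interior elliptic regularity applied to $\Delta(\phi_x\psi)$, followed by taking the supremum over the unit translates $\phi_x$, upgrades the regularity. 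This last step---promoting the regularity by one order while keeping the estimate compatible with (\ref{e:StreamBound}) and keeping track of how the constants depend on the geometry of $U$---is the part I expect to be the main obstacle; by contrast, the construction and the identity $\curl\psi=u$ are routine, and the $H^s$-estimate is a direct, if bookkeeping-heavy, application of Minkowski's inequality and the scaling $y=tx$.
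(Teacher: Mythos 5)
Your construction is the paper's own: the same homotopy-formula potential (\ref{e:StreamSimplified}), the same verification of $\curl \psi = u$ using $\dv u = 0$ and an integration by parts in the parameter, and the same Minkowski-plus-scaling estimate (one factor of $x$ bounded by $R$, $\|D^\beta u(t\cdot)\|_{L^2(U)} = t^{-3/2}\|D^\beta u\|_{L^2(tU)}$ with a covering/measure bound uniform in $t$) giving (\ref{e:StreamBound}); the only cosmetic difference is that the paper justifies the pointwise manipulations by first taking $u$ smooth and appealing to density in $H^s_{ul}$, rather than your $s\ge 3$ embedding into $\tilde{C}^1$. Regarding the $H^{s+1}_{ul}$ membership that you flag as the main obstacle, the paper does exactly what you do---computes $\dv \psi = \int_0^1 \tau^2\, x\cdot(\curl u)(\tau x)\, d\tau$---and then asserts the conclusion without further details, explicitly noting that only the localized bound (\ref{e:StreamBound}) (i.e., $H^s_{ul}$-level regularity) is used later.
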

\begin{proof}
It is sufficient to prove the result for $u \in C^\iny(\R^3) \cap H^s_{ul}(\R^3)$, as the result then follows from the density of this space in $H^s_{ul}(\R^3)$. We can then define the stream function as
\begin{align}\label{e:StreamSimplified}
	\psi(x)
		&= - \int_0^1 \tau x \times u(\tau x) \, d \tau.
\end{align}
Using $\curl (A \times B) = \dv B \, A - \dv A \, B + B \cdot \grad A - A \cdot \grad B$, $\dv (u(\tau x)) = 0$, $\dv x = 3$, $\grad x = I$, we have
\begin{align*}
	\curl (&x \times u(\tau x))
		= - 3 u (\tau x)  + u (\tau x)  \cdot \grad x - \tau x \cdot \grad u (\tau x)  \\
		&= - 3 u (\tau x)  + u (\tau x)  \cdot I - \tau x \cdot \grad u (\tau x) 
		= - 2 u (\tau x) - \tau x \cdot \grad u (\tau x). 
\end{align*}
Hence,
\begin{align*}
	\curl \psi(x)
		&= \int_0^1
			\brac{2 \tau u (\tau x) + \tau^2 x \cdot \grad u (\tau x)}  d \tau.
\end{align*}
Integrating the first term by parts, we have
\begin{align*}
	&\int_0^1 2 \tau u (\tau x) \, d \tau
		= \tau^2 u(\tau x)\big\vert_0^1
			- \int_0^1 \tau^2 x \cdot \grad u(\tau x) \, d \tau\\
		&\qquad= u(x) - \int_0^1 \tau^2 x \cdot \grad u(\tau x) \, d \tau.
\end{align*}
It follows that $\curl \psi = u$.

For estimates, it is perhaps easier to write (\ref{e:StreamSimplified}) in indices, as
\begin{align}\label{e:Stream}
    \psi^i(x)=\int_0^1 \big[\tau x_{i+2} u^{i+1}(\tau x)
    	-\tau x_{i+1}u^{i+2}(\tau x)\big]\, d \tau,
\end{align}
where if $i + 1$ or $i + 2 > 3$ we subtract 3 from it.

In (\ref{e:Stream}), we have $\abs{x} \le R$ on $U$, so
\begin{align*}
	\norm{\psi}_{L^2(U)}
		&\le C R \int_0^1 \tau \smallnorm{u(\tau \cdot)}_{L^2(U)} \, d \tau
		= C R \int_0^1 \frac{\tau}{\tau^{\frac{3}{2}}} \smallnorm{u}_{L^2(\tau U)} \, d \tau.
\end{align*}
But, $\abs{\tau U} \le \abs{U}$ for all $\tau \in [0, 1]$, so
$
	\smallnorm{u}_{L^2(\tau U)}
		\le C(\abs{U}) \smallnorm{u}_{L^2_{ul}}
$
and
\begin{align*}
	\norm{\psi}_{L^2(U)}
		&\le C R 
			\int_0^1 \tau^{-\frac{1}{2}} \smallnorm{u}_{L^2_{ul}} \, d \tau
		= C R.
\end{align*}
		
Let $\al = (\al_1, \al_2, \al_3)$ be a multi-index. Then 
\begin{align*}	
	D^\alpha \brac{\tau x_j u^\ell (\tau x)}
		&= \tau x_j \tau^{\abs{\al}} D^\al u^\ell (\tau x)
			+ \tau \tau^{\abs{\al} - 1} D^{\al'} u^\ell (\tau x),
\end{align*}
where $\al'$ has the $j$ index decreased by one, with the second term absent if $\al_j = 0$. Arguing as for $\norm{\psi}_{L^2(U)}$, we conclude from this that
\begin{align*}
	\sup_{\abs{\al} = k} \norm{D^\al \psi}_{L^2(U)}
		&\le C R \norm{u}_{H^k_{ul}} + C \norm{u}_{H^{k - 1}_{ul}},
\end{align*}
from which (\ref{e:StreamBound}) follows by summing over $k \le s$.

Finally, using $\dv (A \times B) = (\curl A) \cdot B - (\curl B) \cdot A$, we have
\begin{align*}
	\dv \psi(x)
		&= - \int_0^1 \tau
			\brac{\curl x \cdot u(\tau x) - \tau (\curl u)(\tau x) \cdot x} \, d \tau \\
		&= \int_0^1 \tau^2 x \cdot (\curl u)(\tau x) \, d \tau.
\end{align*}
Because $\curl u \in L^2_{ul}$, $\curl \psi$ and $\dv \psi$ lie in $L^2_{ul},$ this is enough to conclude that $\psi \in H^{s + 1}_{ul}(\R^3).$  As we use only $H^s_{ul}$ regularity, we do not include further details.
\end{proof}

%%%%%%%%%%new section - SQG existence in Holder spaces
\section{Existence of solutions to ($SQG$) in \Holder spaces}\label{S:SQGHolder}
In this section, we prove the following theorem.
\begin{theorem}\label{SQGHolder}
For $r\in (1,\infty)$, let $\theta^0$ be a function in $C^{r}(\R^2)$, and let $u^0$ in $C^r(\R^2)$ satisfy
\begin{equation*}
u^0 =  \nabla^{\perp} (-\Delta)^{-1/2}  {\theta}^0 \text{  in }\dot{C}^r(\R^2).
\end{equation*}
There exists $T>0$ and a unique solution $(u, \theta)$ to 
\begin{equation}\label{SQG}
\begin{split}
&\partial_t \theta + u\cdot\nabla\theta = 0,\\
&(u, \theta)|_{t=0} = (u^0, \theta^0),
\end{split}
\end{equation}   
satisfying, for any $r' \in (0, r)$,
\begin{equation*}
\begin{split}
&\theta\in L^{\infty}(0,T; C^{r}(\R^2)) \cap Lip([0,T] ; C^{r-1}(\R^2)) \cap C([0,T]; C^{r'}(\R^2)),\\
&u \in L^{\infty}(0,T; C^{r}(\R^2)) \cap C([0,T]; C^{r'}(\R^2)).  
\end{split}
\end{equation*}    

Moreover, there exists $C>0$ such that $(u,\theta)$ satisfies the estimate
\begin{equation}\label{shorttimebound}
\| u \|_{L^{\infty}(0,T; L^{\infty})} + \| \theta \|_{L^{\infty}(0,T; C^r)} \leq \frac{C(\| u^0 \|_{L^{\infty}} + \| \theta^0 \|_{C^r})}{1-CT(\| u^0 \|_{L^{\infty}} + \| \theta^0 \|_{C^r})}
\end{equation}
and the equality (see Definition \ref{D:stardot})
\begin{equation}\label{MainThmSerfatiSQG}
\begin{split}
&u(t) = u^0
+(a\Phi) * \nabla^{\perp}(\theta(t) - \theta^0)
- \int_0^t (\nabla\nabla^\perp((1-a)\Phi)) \stardot (\theta u)
\end{split}
\end{equation}
for each $t\in[0,T]$.
\end{theorem}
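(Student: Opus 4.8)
The plan is to construct $(u,\theta)$ by a Picard iteration in which each step solves a \emph{linear} transport equation and then recovers the velocity from the Serfati identity (\ref{MainThmSerfatiSQG}), to prove bounds uniform in the iteration index that close through a Riccati-type inequality, to pass to the limit, and finally to read off uniqueness and time-continuity from stability estimates in a slightly weaker norm. The one structural point that makes this nontrivial is that the solution's velocity can only be tied to $\theta$ through the Serfati identity, never through the bare constitutive law, because the data do not decay.

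Set $(u_0,\theta_0)=(u^0,\theta^0)$. Given $(u_n,\theta_n)$ with $u_n$ divergence free and Lipschitz in $x$, let $\theta_{n+1}$ solve $\partial_t\theta_{n+1}+u_n\cdot\nabla\theta_{n+1}=0$, $\theta_{n+1}|_{t=0}=\theta^0$, which is well posed for non-decaying data because the flow of $u_n$ is a bi-Lipschitz bijection of $\R^2$; the standard $C^r$ transport estimate (valid for $r>1$, where $\|\nabla u_n\|_{L^\infty}$ and $\|\nabla\theta_{n+1}\|_{L^\infty}$ are dominated by the corresponding $C^r$ norms, with a constant blowing up as $r\to1^+$ exactly as in Lemma \ref{holderzygmund}) then gives $\|\theta_{n+1}(t)\|_{C^r}\le\|\theta^0\|_{C^r}+C\int_0^t\|u_n\|_{C^r}\|\theta_{n+1}\|_{C^r}$. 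Next define, following (\ref{MainThmSerfatiSQG}),
\begin{equation*}
u_{n+1}(t)=u^0+(a\Phi)*\nabla^\perp(\theta_{n+1}(t)-\theta^0)-\int_0^t\big(\nabla\nabla^\perp((1-a)\Phi)\big)\stardot(\theta_{n+1}u_n).
\end{equation*}
Since $\nabla\cdot u_n=0$ we have $\nabla\cdot(\theta_{n+1}u_n)=u_n\cdot\nabla\theta_{n+1}=-\partial_t\theta_{n+1}$, so integrating the last term by parts in time --- legitimate after projecting onto frequencies $j\ge0$, exactly as in the Serfati-type identity and constitutive relation proved in the appendices --- shows $\Delta_j u_{n+1}(t)=\Delta_j\nabla^\perp(-\Delta)^{-1/2}\theta_{n+1}(t)$ for every $j\ge0$. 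In particular $u_{n+1}$ is divergence free, and Lemma \ref{holderrieszbound} applies: $\|u_{n+1}(t)\|_{C^r}\le C\big(\|u_{n+1}(t)\|_{L^\infty}+\|\theta_{n+1}(t)\|_{C^r}\big)$.

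To close the estimates I would bound $\|u_{n+1}(t)\|_{L^\infty}$ from the displayed formula: the far-field term is at most $\|\nabla\nabla^\perp((1-a)\Phi)\|_{L^1}\|\theta_{n+1}u_n\|_{L^\infty}\le C\|\theta_{n+1}\|_{L^\infty}\|u_n\|_{L^\infty}$ by (\ref{e:GPhiBounds}), while, writing $(a\Phi)*\nabla^\perp g=\nabla^\perp(a\Phi)*g$, the near-field term is at most $C\|\theta_{n+1}(t)-\theta^0\|_{C^{r'}}$ by Lemma \ref{L:WasSmallConst}, and since $\theta_{n+1}(t)-\theta^0=-\int_0^t\nabla\cdot(\theta_{n+1}u_n)$ this is $\le C\int_0^t\|u_n\|_{C^r}\|\theta_{n+1}\|_{C^r}$ once $0<r'\le r-1$ (so $C^{r'}$ is an algebra and $\|\nabla\theta_{n+1}\|_{C^{r'}}\lesssim\|\theta_{n+1}\|_{C^r}$). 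With $y_n(t):=\|u_n(t)\|_{L^\infty}+\|\theta_n(t)\|_{C^r}$ and Lemma \ref{holderrieszbound} used to replace each $\|u_m\|_{C^r}$ by $Cy_m$, the two bounds combine to $y_{n+1}(t)\le y_0+C\int_0^t y_n y_{n+1}$, $y_0:=\|u^0\|_{L^\infty}+\|\theta^0\|_{C^r}$, and an induction gives $\sup_{[0,T]}y_n\le 2y_0$ for all $n$ provided $T\lesssim 1/y_0$. Running the differences $\delta\theta_n=\theta_{n+1}-\theta_n$, $\delta u_n=u_{n+1}-u_n$ in the weaker norm $C^{r'}$ --- the equation for $\delta\theta_n$ has source $-\delta u_{n-1}\cdot\nabla\theta_n$, and $\delta u_n$ is handled through (\ref{MainThmSerfatiSQG}) together with the $C^{r'}$-analogue of Lemma \ref{holderrieszbound} (whose proof via Lemma \ref{L:CZLike} is insensitive to the exponent) --- yields a factor $CT$ per step, so after shrinking $T$ the scheme contracts in $C([0,T];C^{r'}(\R^2))$ and $(u_n,\theta_n)\to(u,\theta)$ there. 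Passing to the limit is routine: $\nabla^\perp(a\Phi)$ is compactly supported, $\nabla\nabla^\perp((1-a)\Phi)\in L^1(\R^2)$, and $\theta_{n+1}u_n\to\theta u$ locally uniformly, so $(u,\theta)$ solves (\ref{SQG}) and satisfies (\ref{MainThmSerfatiSQG}); the uniform bound upgrades to $\theta\in L^\infty(0,T;C^r)$ by weak-$*$ compactness and then to $u\in L^\infty(0,T;C^r)$ by Lemma \ref{holderrieszbound}, and on the limit the same estimates give $y(t)\le y_0+C\int_0^t y(s)^2\,ds$, hence $y(t)\le y_0/(1-Cty_0)$, which is (\ref{shorttimebound}). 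Finally $\partial_t\theta=-u\cdot\nabla\theta\in L^\infty(0,T;C^{r-1})$ gives $\theta\in Lip([0,T];C^{r-1})$, and interpolation against $L^\infty(0,T;C^r)$ yields $\theta\in C([0,T];C^{r'})$ for every $r'\in(0,r)$; the statement for $u$ follows since the Riesz transforms are bounded on $C^{r'}$ and $\Delta_{-1}u(t)$ depends continuously on $t$ by (\ref{MainThmSerfatiSQG}). Uniqueness is the same $C^{r'}$ stability estimate applied to two solutions with the same data (each of which satisfies (\ref{MainThmSerfatiSQG}) by the appendix), followed by \Gronwalls inequality.

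The step I expect to be the main obstacle is precisely this interplay between non-decay and the constitutive law. Neither $u=\nabla^\perp(-\Delta)^{-1/2}\theta$ nor the plain far-field convolution $\nabla^\perp((1-a)\Phi)*\theta$ is meaningful for merely bounded $\theta$, since $\nabla^\perp\Phi$ is not integrable at infinity in $\R^2$; everything must therefore be routed through the Serfati identity, whose far-field integrand $\nabla\nabla^\perp((1-a)\Phi)$ \emph{is} in $L^1$ and which agrees with $u=\nabla^\perp(-\Delta)^{-1/2}\theta$ only on frequencies $j\ge0$. Keeping straight which bounds are honest $L^\infty$ bounds (the far-field term, via (\ref{e:GPhiBounds})) and which cost a positive amount of \Holder regularity (the near-field term, via Lemma \ref{L:WasSmallConst}, and the recovery of $\|u\|_{C^r}$ from $\|u\|_{L^\infty}+\|\theta\|_{C^r}$, via Lemma \ref{holderrieszbound}), and verifying that this regularity is always available because $r>1$, is exactly what makes the Riccati inequality $y'\lesssim y^2$ close and produces the quadratic denominator in (\ref{shorttimebound}).
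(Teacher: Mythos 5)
Your proposal is correct and follows essentially the same route as the paper: a linear-transport iteration in which the velocity update is made through the Serfati identity, the block relation $\Delta_j u_{n+1}=\Delta_j\nabla^\perp(-\Delta)^{-1/2}\theta_{n+1}$ (the appendix constitutive-relation lemma) feeding Lemma \ref{holderrieszbound} to recover $\|u\|_{C^r}$ from $\|u\|_{L^\infty}+\|\theta\|_{C^r}$, a Riccati-type inequality giving the denominator in (\ref{shorttimebound}), convergence in weaker norms, and the same limit and uniqueness arguments. The only deviation is cosmetic: you keep the original data at every step and contract for small $T$, whereas the paper follows Wu's scheme with frequency-truncated data $S_{n+2}\theta^0$, $S_{n+2}u^0$ and sums the telescoping differences; both closings are valid.
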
  
Before proving the theorem, we make a few  remarks.
\begin{remark}
For $r>0$ a non-integer, a pair $(u^0, \theta^0)$ satisfying the conditions of Theorem \ref{SQGHolder} can be easily generated from any function $\psi\in C^{r+1}(\R^2)$ by setting $u^0 = \nabla^{\perp}\psi$ and $\omega^0 = (-\Delta)^{1/2}\psi$.  Note that $\omega^0$ belongs to $C^r(\R^2)$ by the classical Schauder estimates for the fractional Laplacian (see for example \cite{Stinga}).  By the containment ${C}^r(\R^2) \subset \dot{C}^r(\R^2)$ and Lemma \ref{L:CZLike}, both $u^0$ and $\theta^0$ belong to $\dot{C}^r(\R^2)$.  Moreover, we have for every  $j\in\Z$, $$\dot{\Delta}_ju^0 =  \dot{\Delta}_j\nabla^{\perp} (-\Delta)^{-1/2}  {\theta}^0$$ almost everywhere on $\R^2$.   
\end{remark}

\begin{remark}
	Since (\ref{MainThmSerfatiSQG}) holds for $a_\lambda$ in place of $a$ for any
	$\lambda > 0$, and $\theta$ and $u$ lie in $L^\iny([0, T] \times \R^2)$,
	by (\ref{e:GPhiBounds}) we have
	\begin{align*}
		u(t) = u^0
			+ \lim_{\lambda \to \iny} (a_\lambda \Phi) \ast \nabla^{\perp} (\theta(t) - \theta^0),
	\end{align*}
	the limit holding pointwise. This gives a form of the constitutive law for (\ref{SQG})
	and is the analog for ($SQG$) of the renormalized Biot-Savart law of \cite{AKLN,Kelliher}
	that applies to non-decaying solutions to the 2D Euler equations.
\end{remark}

\begin{proof}[\textbf{Proof of Theorem \ref{SQGHolder}}]
We adapt the general strategy used in the proof of Theorem 4.1 in \cite{Wu}.  In particular, we construct an approximating sequence of solutions and pass to the limit in the appropriate norm.  To obtain uniform bounds on the approximating sequence, the proof in \cite{Wu} relies heavily on the estimate
\begin{equation}\label{WuRiesz}
\| Rf \|_{C^r} \leq C\| f \|_{C^r\cap L^q}
\end{equation}
for $q<\infty$ and $r>1$, where $R$ denotes a Riesz transform.  Since our approximating sequence must converge to a solution lacking spatial decay (and hence not belonging to $L^q(\R^2)$ for any $q<\infty$), we utilize Lemma \ref{holderrieszbound} and a Serfati-type identity (see (\ref{seqapprox}) below) in place of (\ref{WuRiesz}).     
   
\bigskip
\noindent {\bf Approximating sequence.}  We define sequences $(\theta^n)_{n=1}^{\infty}$ and $(u^n)_{n=1}^{\infty}$ as follows:
\begin{equation}\label{iteration}
\begin{split}
&\theta^1(t,x) = S_2\theta^0(x),\\
&u^1(t,x) = S_2u^0(x),
\end{split}
\end{equation}
for all $t\geq 0$, while, for $n\geq 1$,
\begin{align}\label{seqapprox}
\begin{split}
&\partial_t\theta^{n+1} + u^n\cdot\nabla\theta^{n+1} =0,\\
&\theta^{n+1} (x,0) = S_{n+2}\theta^0, u^{n+1} (x,0) = S_{n+2}u^0,\\
&u^{n+1}(t)= u^{n+1}(0)+  (a\Phi) * \grad^\perp(\theta^{n+1}(t) - \theta^{n+1}(0))\\
&\qquad - \int_0^t (\grad \grad^\perp((1 - a) \Phi)) \stardot (\theta^{n+1} u^{n}).
\end{split}
\end{align}
Note that with $(u^n)$ and $(\theta^n)$ as in (\ref{seqapprox}), Lemma \ref{BMOSerfati} gives that for all $j\in\Z$, $n\in\N$, and $t\in[0,T]$,
$$ \dot{\Delta}_j u^n(t) = \dot{\Delta}_j \nabla^{\perp}(-\Delta)^{-1/2}\theta^n(t)$$
almost everywhere on $\R^2$, which will allow us to apply Lemma \ref{holderrieszbound} repeatedly in what follows.

\bigskip

\noindent {\bf Uniform Bounds.}  The proof of Proposition 4.2 in \cite{Wu} yields the following estimate:
\begin{equation}\label{scalarholder}
\begin{split}
&\| \theta^{n+1} (t) \|_{C^r} \leq \| \theta^{n+1} (0) \|_{C^r}\\
& + C(r)\int_0^t \left( \| \nabla \theta^{n+1} (s) \|_{L^{\infty}} \| u^n(s) \|_{C^r} + \| \nabla u^n(s) \|_{L^{\infty}} \| \theta^{n+1}(s) \|_{C^r}  \right) \, ds\\
& \leq \| \theta^{n+1} (0) \|_{C^r} + C(r)\int_0^t \| \theta^{n+1} (s) \|_{C^r} \| u^n(s) \|_{C^r}    \, ds.
\end{split}
\end{equation}
We now use $(\ref{seqapprox})_3$ to estimate $\| u^{n+1}(t) \|_{L^{\infty}}$.  In particular, one can write
\begin{equation}\label{velinf}
\begin{split}
&\| u^{n+1}(t) \|_{L^{\infty}} \leq \| u^{n+1}(0) \|_{L^{\infty}} + C\| \nabla{\theta}^{n+1}(t) \|_{L^{\infty}}\\
&\qquad  + C\| \nabla{\theta}^{n+1}(0) \|_{L^{\infty}} + C\int_0^t \| {\theta}^{n+1}(s) \|_{L^{\infty}}\| u^{n}(s) \|_{L^{\infty}} \, ds.
\end{split}
\end{equation}
Adding (\ref{scalarholder}) and (\ref{velinf}) gives
\begin{equation*}
\begin{split}
&\| u^{n+1}(t) \|_{L^{\infty}} + \| \theta^{n+1} (t) \|_{C^r}  \leq \| u^{n+1}(0) \|_{L^{\infty}} + C(r)\| {\theta}^{n+1}(t) \|_{C^r} + C(r)\| {\theta}^{n+1}(0) \|_{C^r}\\
&\qquad+ C(r)\int_0^t (\| {\theta}^{n+1}(s) \|_{L^{\infty}}\| u^{n}(s) \|_{L^{\infty}} + \| \theta^{n+1} (s) \|_{C^r} \| u^n(s) \|_{C^r}) \, ds,
\end{split}
\end{equation*}
where we used Lemma \ref{holderzygmund}.  The term $C(r)\| {\theta}^{n+1}(t) \|_{C^r}$ appearing on the right hand side can again be estimated using (\ref{scalarholder}).  Then we have
\begin{equation}\label{shorttimeboundprelimit}
\begin{split}
&\| u^{n+1}(t) \|_{L^{\infty}} +  \| \theta^{n+1} (t) \|_{C^r}  \leq \| u^{n+1}(0) \|_{L^{\infty}} + C(r) \| {\theta}^{n+1}(0) \|_{C^r}\\
&\qquad + C(r)\int_0^t  \| \theta^{n+1} (s) \|_{C^r} \| u^n(s) \|_{C^r} \, ds\\
&\leq \| u^{n+1}(0) \|_{L^{\infty}} + C(r) \| {\theta}^{n+1}(0) \|_{C^r} \\
&\qquad+ C(r)\int_0^t  (\| u^{n+1}(s) \|_{L^{\infty}} + \| \theta^{n+1} (s) \|_{C^r}) \| u^n(s) \|_{C^r} \, ds.
\end{split}
\end{equation}
\Gronwalls Lemma gives
\begin{equation}\label{1}
\| u^{n+1}(t) \|_{L^{\infty}} + \| \theta^{n+1} (t) \|_{C^r}  \leq C(r)(\| u^{n+1}(0) \|_{L^{\infty}} + \| {\theta}^{n+1}(0) \|_{C^r})e^{C(r)\int_0^t \| u^n(s) \|_{C^r}}.
\end{equation}  
By (\ref{1}) and Lemma \ref{holderrieszbound},
\begin{equation}\label{velocitypostGronwall}
\begin{split}
&\| u^{n+1}(t) \|_{L^{\infty}} + \| \theta^{n+1} (t) \|_{C^r} \\
&\qquad \leq C(r)(\| u^{n+1}(0) \|_{L^{\infty}} + \| {\theta}^{n+1}(0) \|_{C^r})e^{C(r)\int_0^t (\| u^n(s) \|_{L^{\infty}} + \| \theta^{n} (s) \|_{C^r}) \, ds},
\end{split}
\end{equation}   
where we can assume $C(r)\geq 2$.  

We use induction and (\ref{velocitypostGronwall}) to show that there exists $M>0$ and $T>0$ such that, for all $t<T$, and for all $n\geq 1$,
\begin{equation}\label{induction}
\| u^{n}(t) \|_{L^{\infty}} + \| \theta^{n} (t) \|_{C^r}  \leq M.
\end{equation}  
To prove the case $n=1$, first note that by properties of Littlewood-Paley operators and Young's inequality, there exists a constant $C_2$ such that, for all $n\geq 1$,
\begin{equation}\label{iditerationbound}
\| S_{n+2} u^0 \|_{L^{\infty}}  +  \| S_{n+2} \theta^0 \|_{C^r} \leq C_2( \|  u^0 \|_{L^{\infty}}  +  \| \theta^0 \|_{C^r}). 
\end{equation}
In particular, we have
\begin{equation*}
\| u^1 \|_{L^{\infty}} + \| \theta^1 \|_{C^r} \leq C_2(\| u^0 \|_{L^{\infty}} + \| {\theta}^0 \|_{C^r}).
\end{equation*}
Set $M = 2C(r)C_2(\| u^0 \|_{L^{\infty}} + \| {\theta}^0 \|_{C^r})$, where $C(r)$ is as in (\ref{velocitypostGronwall}), and choose $T$ such that exp$(C(r)TM) \leq 2$.  Then
\begin{equation*}
\| u^1 \|_{L^{\infty}} + \| \theta^1 \|_{C^r} \leq C_2(\| u^0 \|_{L^{\infty}} + \| {\theta}^0 \|_{C^r})<M.
\end{equation*}
This proves (\ref{induction}) for $n=1$.  

Now assume, for fixed $k\in\N$, $\| u^k(s) \|_{L^{\infty}} + \| \theta^{k} (s) \|_{C^r} \leq M$ for each $s\in [0,T]$.  By (\ref{velocitypostGronwall}) and (\ref{iditerationbound}),
\begin{equation*}
\begin{split}
&\| u^{k+1}(t) \|_{L^{\infty}} + \| \theta^{k+1} (t) \|_{C^r} \leq C(r)(\| u^{k+1}(0) \|_{L^{\infty}} + \| {\theta}^{k+1}(0) \|_{C^r})e^{C(r)TM}\\
&\qquad \leq 2C(r)C_2(\| u(0) \|_{L^{\infty}} + \| {\theta}(0) \|_{C^r}) = M.
\end{split}
\end{equation*}
Thus (\ref{induction}) holds for all $n$.

From (\ref{induction}) and Lemma \ref{holderrieszbound}, it follows that, for $r>1$, there exists $C>0$ such that for all $n\in\N$, $\| u^{n} \|_{C^r} \leq CM$.  Therefore, for each $n\in\N$,
\begin{equation}\label{timederivativebound}
\begin{split}
&\| \partial_t\theta^{n+1} \|_{C^{r-1}} \leq \| u^n\cdot\nabla\theta^{n+1} \|_{C^{r-1}}\\
&\qquad \leq C(\| u^n\|_{C^{r-1}}\|\nabla\theta^{n+1} \|_{L^{\infty}} +  \| u^n\|_{L^{\infty}}\|\nabla\theta^{n+1} \|_{C^{r-1}})\\
&\qquad \leq C(r)\| u^n\|_{C^{r-1}}\|\theta^{n+1} \|_{C^{r}} \leq C(r)M^2.
\end{split} 
\end{equation}
From this we conclude that for each $n\in\N$, $\partial_t\theta^{n}\in L^{\infty}(0,T; C^{r-1})$ and $\theta^n \in Lip([0,T]; C^{r-1})$, with norms uniformly bounded in $n$.\\
\\
{\bf $\bm{(u^n)}$ and $\bm{(\theta^n)}$ are Cauchy.}  We now show $(\theta^n )$ is Cauchy in $C([0,T]; C^{r-1}(\R^2))$ and $( u^n )$ is Cauchy in $C([0,T]; L^{\infty}(\R^2))$.  As in \cite{Wu}, let $\eta^n = \theta^{n} - \theta^{n-1}$ and $v^{n} = u^{n} - u^{n-1}$.  From (\ref{iteration}) and (\ref{seqapprox}), we have the system
\begin{equation}\label{iteration1}
\begin{split}
&\eta^1 = S_{2}\theta^0 - \theta^0,\\
&v^1 = S_{2}u^0 - u^0,
\end{split}
\end{equation}
and for $n\geq 1$,
\begin{equation}\label{seqapprox1}
\begin{split}
&\partial_t\eta^{n+1} + u^n\cdot\nabla\eta^{n+1} =v^n\cdot\nabla\theta^n,\\
&\eta^{n+1} (x,0) = \eta_0^{n+1} (x) = \Delta_{n+2}\theta^0(x),\\
&v^{n+1} (x,0) = v_0^{n+1} (x)= \Delta_{n+2}u^0(x).
\end{split}
\end{equation}
Moreover,
\begin{equation}\label{Serfatiapprox12}
\begin{split}
&v^n(t) -  v^n(0) =  (a_{\lambda} \Phi) * \grad^\perp(\eta^n(t) - \eta^n(0))\\
&\qquad - \int_0^t (\grad \grad^\perp ((1 - a_{\lambda}) \Phi)) \stardot (\eta^n u^{n-1} + \theta^{n} v^{n-1}).
\end{split}
\end{equation}  

%%%%
We have the following estimate from \cite{Wu}:
\begin{equation}\label{scalardiff}
\begin{split}
&\| \eta^{n+1} (t) \|_{C^{r-1}} \leq \| \eta^{n+1} (0) \|_{C^{r-1}}\\
& + C(r)\int_0^t (\| \eta^{n+1} (s) \|_{C^{r-1}}\| u^{n} (s) \|_{C^{r}}  + \| v^n(s) \|_{C^{r-1}} \|\theta^n(s) \|_{C^r}) \, ds\\
&\qquad \leq \| \eta^{n+1} (0) \|_{C^{r-1}} + C(r)M\int_0^t (\| \eta^{n+1} (s) \|_{C^{r-1}}  + \| v^n(s) \|_{C^{r-1}} ) \, ds,
\end{split}
\end{equation}
where we applied the uniform bounds on $\| u^n \|_{C^r}$ and $\| \theta^n\|_{C^r}$ to get the second inequality.  We apply the $L^{\infty}$-norm to $(\ref{Serfatiapprox12})$, which gives
\begin{equation}\label{veldiff}
\begin{split}
&\| v^{n+1}(t) \|_{L^{\infty}} \leq \| v^{n+1}(0) \|_{L^{\infty}} + \|(a\Phi) \ast\nabla^{\perp}{\eta}^{n+1}(t) \|_{L^{\infty}} + \|(a\Phi) \ast\nabla^{\perp}{\eta}^{n+1}(0) \|_{L^{\infty}}\\
&\qquad   + \int_0^t (\| {\eta}^{n+1}(s) \|_{L^{\infty}}\| u^{n}(s) \|_{L^{\infty}} + \| {\theta}^{n+1}(s) \|_{L^{\infty}}\| v^{n}(s) \|_{L^{\infty}}) \, ds\\
&  \leq \| v^{n+1}(0) \|_{L^{\infty}} + \|(a\Phi) \ast\nabla^{\perp}{\eta}^{n+1}(t) \|_{L^{\infty}} + \|(a\Phi) \ast\nabla^{\perp}{\eta}^{n+1}(0) \|_{L^{\infty}}\\
&\qquad   + CM\int_0^t (\| {\eta}^{n+1}(s) \|_{L^{\infty}} + \| v^{n}(s) \|_{L^{\infty}}) \, ds. 
\end{split}
\end{equation}
Adding (\ref{scalardiff}) and (\ref{veldiff}) gives
\begin{equation}\label{velscalar}
\begin{split}
&\| v^{n+1}(t) \|_{L^{\infty}} + \| \eta^{n+1} (t) \|_{C^{r-1}}   \leq \| v^{n+1}(0) \|_{L^{\infty}} +  \| \eta^{n+1} (0) \|_{C^{r-1}}\\
&\qquad + \|\nabla^{\perp}(a\Phi) \ast{\eta}^{n+1}(0) \|_{L^{\infty}} + \|\nabla^{\perp}(a\Phi) \ast{\eta}^{n+1}(t) \|_{L^{\infty}}\\
& + MC(r)\int_0^t ( \| v^{n+1} \|_{L^{\infty}}  + \| \eta^{n+1} (s) \|_{C^{r-1}}  + \| v^{n}(s) \|_{L^{\infty}} + \| \eta^{n} (s) \|_{C^{r-1}}) \, ds,
\end{split}
\end{equation}
where we applied Lemma \ref{holderrieszbound}.  To estimate the terms $\|\nabla^{\perp}(a\Phi) \ast{\eta}^{n+1}(t) \|_{L^{\infty}}$ and $\|\nabla^{\perp}(a\Phi) \ast{\eta}^{n+1}(0) \|_{L^{\infty}}$, we apply Lemma \ref{L:WasSmallConst}, giving
\begin{equation*}
\begin{split}
&\|\nabla^{\perp}(a\Phi) \ast{\eta}^{n+1}(t) \|_{L^{\infty}} \leq C(r) \| {\eta}^{n+1}(t) \|_{C^{r-1}},\\
&\|\nabla^{\perp}(a\Phi) \ast{\eta}^{n+1}(0) \|_{L^{\infty}} \leq C(r) \| {\eta}^{n+1}(0) \|_{C^{r-1}}.
\end{split}
\end{equation*}
We then bound the resulting term $\| {\eta}^{n+1}(t) \|_{C^{r-1}}$ using (\ref{scalardiff}) and again apply Lemma \ref{holderrieszbound}.  Substituting the resulting estimate into (\ref{velscalar}) gives
\begin{equation}\label{Cauchyest}
\begin{split}
& \| v^{n+1}(t) \|_{L^{\infty}}  + \| \eta^{n+1} (t) \|_{C^{r-1}}  \leq C_1(r)(\| v^{n+1}(0) \|_{L^{\infty}} + \| {\eta}^{n+1}(0) \|_{C^{r-1}}) \\
& + C_1(r)M\int_0^t \left(( \| v^{n+1}(s) \|_{L^{\infty}}+\| \eta^{n+1} (s) \|_{C^{r-1}}) + (\| v^n(s) \|_{L^{\infty}}+ \| {\eta}^n(s) \|_{C^{r-1}} )\right) \, ds.
\end{split}
\end{equation}
Set $D_n (t) = \| v^n(t) \|_{L^{\infty}} + \| {\eta}^n(t) \|_{C^{r-1}}$.  Then (\ref{Cauchyest}) gives
\begin{equation}\label{forDavid1}
D_{n+1} (t) \leq C_1(r) D_{n+1} (0) + C_1(r)M \int_0^t  (D_{n+1} (s) + D_{n} (s)) \, ds.
\end{equation}

Let
\begin{align*}
	E(t)
		:= \sum_{n = 0}^\iny D_{n + 1}(t),
\end{align*}
noting that $E(0)$ is finite because $\theta^0$ and $u^0$ lie in $C^r(\R^2)$.
Summing (\ref{forDavid1}) over $n$ and using (\ref{induction}), we have that
\begin{align*}
	E(t)
		&\le C E(0) + C M t + C M \int_0^t E(s) \, ds.
\end{align*}
By \Gronwalls lemma,
\begin{align*}
	E(t)
		&\le (C E(0) + C M T) e^{CM t}.
\end{align*}
It follows that for any fixed time $t \in [0, T]$, the sequences $(u^n(t))$ and $(\theta^n(t))$ are Cauchy in $L^\iny(\R^2)$ and $C^{r - 1}(\R^2)$, respectively.

Now let $\epsilon > 0$. From (\ref{timederivativebound}), we also have uniform-in-time control on $(\prt_t \theta^n(t))$ in $C^{r - 1}(\R^2)$, so we can choose a $\delta > 0$ such that for any $s_1, s_2 \in [0, T]$,
\begin{align*}
	\abs{s_1 - s_2} < \delta
		\implies \norm{\theta^n(s_1) - \theta^n(s_2)}_{C^{r - 1}(\R^2)} < \frac{\epsilon}{3}.
\end{align*}
Let $N_1$ be an integer greater than $T/\delta$, and let $t_k = k T/N_1$, $k = 0, \dots, N_1$. Choose an integer $N_2$ (which we note depends upon $N_1$) such that for all $k$,
\begin{align*}
	m, n \ge N_2 \implies \norm{\theta^n(t_k) - \theta^m(t_k)}_{C^{r - 1}(\R^2)}
		< \frac{\epsilon}{3}. 
\end{align*}
Then by the triangle inequality, for all $t \in [0, T]$,
\begin{align*}
	m, n \ge N_2 \implies
		&\norm{\theta^n(t) - \theta^m(t)}_{C^{r - 1}(\R^2)}
			\le \norm{\theta^n(t) - \theta^n(t_k)}_{C^{r - 1}(\R^2)} \\
				&\quad
				+ \norm{\theta^n(t_k) - \theta^m(t_k)}_{C^{r - 1}(\R^2)}
				+ \norm{\theta^m(t_k) - \theta^m(t)}_{C^{r - 1}(\R^2)}
		< \epsilon,
\end{align*}
where we choose $k$ so that $\abs{t - t_k} < \delta$. This is enough to conclude that $(\theta^n)$ is Cauchy in $C([0,T];C^{r-1}(\R^2))$.

Similarly, taking the time derivative of (\ref{seqapprox})$_3$ gives uniform-in-time control on $(\prt_t u^n(t))$ in $L^\iny(\R^2)$, and we can conclude that $(u^n)$ is Cauchy in $C([0,T];L^{\infty}(\R^2))$.\\

\noindent {\bf Limit of the sequence solves (\ref{SQG}).}  We conclude that the sequence $(\theta^n)$ converges to some $\theta$ in $C([0,T]; C^{r-1}(\R^2))$, and $(u^n)$ converges to some $u$ in $C([0,T]; L^{\infty}(\R^2))$.  Moreover, we have
\begin{equation*}
\begin{split}
&\theta \in L^{\infty}(0,T_1; C^{r}(\R^2)) \cap Lip([0,T_1] ; C^{r-1}(\R^2)),\\
&u \in L^{\infty}(0,T_1; C^{r}(\R^2)). 
\end{split}
\end{equation*} 
Interpolation between $C^{r - 1}$ and $C^r$ shows that $(\theta^n)$ converges to $\theta$ in $C([0,T]; C^{r'}(\R^2))$ for all $r'\in [r-1,r)$, and interpolation between $C^0$ and $C^r$ shows that $(u^n)$ converges to $u$ in $C([0,T]; C^{\alpha}(\R^2))$ for all $\alpha\in (0,r)$.  Having established convergence in these spaces, we can then pass to the limit in $(\ref{seqapprox})_1$ and $(\ref{seqapprox})_2$.  Note also that, for $r'\in [r-1,r)$ and $\alpha\in(0,r)$,
\begin{equation*}
\begin{split}
&\theta\in L^{\infty}([0,T]; C^{r}(\R^2)) \cap Lip([0,T] ; C^{r-1}(\R^2)) \cap C([0,T]; C^{r'}(\R^2)),\\
&u \in L^{\infty}([0,T]; C^{r}(\R^2)) \cap C([0,T]; C^{\alpha}(\R^2)). 
\end{split}
\end{equation*}
\\
{\bf Solution $\bm{(u,\theta)}$ satisfies (\ref{shorttimebound}).}  We now show that the resulting solution ($u,\theta$) of (\ref{SQG}) satisfies (\ref{shorttimebound}).  Set $\Psi_n(\tau) = \| u^n(\tau) \|_{L^{\infty}} + \| \theta^n(\tau) \|_{C^r}$, $\tau\in[0,T]$.  From (\ref{velocitypostGronwall}), it follows that
\begin{equation*}
\Psi_n(\tau) \leq C\Psi_n(0)e^{C \int_0^\tau \Psi_n(s) \, ds},
\end{equation*}  
so that
\begin{equation*}
\Psi_n(\tau)e^{-C \int_0^\tau \Psi_n(s) \, ds} \leq C\Psi_n(0).
\end{equation*}  
By the chain rule,
\begin{equation*}
-\frac{1}{C}\frac{d}{d\tau}\left(e^{-C \int_0^\tau \Psi_n(s) \, ds}\right) \leq C\Psi_n(0).
\end{equation*}  
For $t \in[\tau, T]$, integrating both sides from $0$ to $t$  gives
\begin{equation*}
-e^{-C \int_0^t \Psi_n(s) \, ds} + 1 \leq C\Psi_n(0)t,
\end{equation*}  
which implies
\begin{equation}\label{expest}
e^{C \int_0^t \Psi_n(s) \, ds} \leq \frac{1}{1-C\Psi_n(0)t}.
\end{equation}
The inequality $\frac{\Psi_n(t)}{C\Psi_n(0)} \leq e^{C \int_0^t \Psi_n(s) \, ds}$, combined with (\ref{expest}), imply that
\begin{equation*}
\| u^n(t) \|_{L^{\infty}} + \| \theta^n(t) \|_{C^r}=\Psi_n(t) \leq \frac{C\Psi_n(0)}{1-C\Psi_n(0)t} \leq \frac{C(\| u^0 \|_{L^{\infty}} + \| \theta^0 \|_{C^r})}{1-Ct(\| u^0 \|_{L^{\infty}} + \| \theta^0 \|_{C^r})},
\end{equation*}   
where we used that $\Psi_n(0) \leq C(\| u^0 \|_{L^{\infty}} + \| \theta^0 \|_{C^r})$ for all $n$ to get the second inequality.  It follows that for each fixed $t\in [0,T]$,
\begin{equation*}
\begin{split}
&\| u(t) \|_{L^{\infty}} + \| \theta(t) \|_{C^r} \leq \frac{C(\| u^0 \|_{L^{\infty}} + \| \theta^0 \|_{C^r})}{1-Ct(\| u^0 \|_{L^{\infty}} + \| \theta^0 \|_{C^r})}.
\end{split}
\end{equation*}   
This yields (\ref{shorttimebound}). \\
\\
{\bf $\bm{(u,\theta)}$ satisfies (\ref{MainThmSerfatiSQG}).}  It remains to prove (\ref{MainThmSerfatiSQG}).  We have that $(\theta^n)$ converges to $\theta$ in $C([0,T]; C^{r'}(\R^2))$ for all $r'<r$, and $(u^n)$ converges to $u$ in $C([0,T];L^{\infty}(\R^2))$.  We claim that this is enough to pass to the limit in $(\ref{seqapprox})_3$.  For $n\in\N$, we subtract the right-hand side of $(\ref{seqapprox})_3$ as satisfied by $(u,\theta)$ from the right-hand-side as satisfied by $(u^n,\theta^n)$.  Taking the $L^{\infty}$-norm of the resulting difference and applying Young's inequality gives
\begin{equation*}
\begin{split}
\| (&u^n - u)(t) \|_{L^{\infty}} \\
&\leq \| (u^n - u)(0) \|_{L^{\infty}}
	+ \| (a\Phi) * \nabla^{\perp}(\theta^n - \theta)(t) \|_{L^{\infty}}
	+ \| (a\Phi) * \nabla^{\perp}(\theta^n - \theta)(0) \|_{L^{\infty}}
\\
& + \int_0^t \| \nabla\nabla^{\perp}((1-a)\Phi) \|_{L^1} \left ( \| (\theta^n - \theta)(s)u^{n-1}(s) \|_{L^{\infty}} + \| \theta(s) (u^{n-1} - u)(s) \|_{L^{\infty}} \right)\, ds. 
 \end{split}
\end{equation*}   
It is clear that
\begin{equation*}
\begin{split}
& \| (u^n - u)(0) \|_{L^{\infty}} \rightarrow 0,\\
& \int_0^t  \left ( \| (\theta^n - \theta)(s)u^{n-1}(s) \|_{L^{\infty}} + \| \theta(s) (u^{n-1} - u)(s) \|_{L^{\infty}} \right)\, ds \rightarrow 0
\end{split}
\end{equation*}
as $n$ approaches infinity, for all $t\in[0,T]$.  

We now show that $\| (a\Phi) \ast \nabla^{\perp}(\theta^n - \theta)(t) \|_{L^{\infty}} = \| \nabla^{\perp}(a\Phi) \ast (\theta^n - \theta)(t) \|_{L^{\infty}}\rightarrow 0$ for all $t\in[0,T]$ as well.  We utilize that $\nabla^{\perp}(a\Phi)$ integrates to $0$.  Surpressing the time variable, and setting $\delta^n =  \theta^n - \theta$ for each $n$, we have, for any $\alpha\in(0,\min\{1,r-1\})$ and any $x\in\R^2$, 
\begin{equation*}
\begin{split}
&\abs{\nabla^{\perp}(a\Phi) * (\theta^n - \theta)(x)}
\leq \abs{\PV \int_{\R^2} \nabla^{\perp}(a\Phi) (y) \left( \delta^n (x-y) - \delta^n (x)\right) \, dy} +  \abs{C\delta_n(x)I}\\
&\leq \int_{\R^2} |\nabla (a\Phi) (y)| |y|^{\alpha} \left( \frac{|\delta^n(x-y) - \delta^n (x)|}{|y|^{\alpha}}\right) \, dy + \abs{C\delta_n(x)I}
\leq C \|  \delta^n \|_{{C}^{\alpha}}\rightarrow 0,
\end{split}
\end{equation*}
since $(\theta^n)$ converges to $\theta$ in $C([0,T]; C^{\alpha}(\R^2))$.
This implies (\ref{MainThmSerfatiSQG}) and completes the proof of the theorem. \\

\noindent \textbf{Uniqueness} An argument similar to the demonstration above that $(u^n)$ and $(\theta^n)$ are Cauchy gives uniqueness of solutions.
\end{proof}

%%%%%%new section%%%%%%%%%%%%%%%%%

\section{($SQG$) in uniformly local Sobolev spaces}\label{S:SQGSobolev}
\subsection{A priori estimates}\label{A Priori Estimates for Solutions in Uniformly Local Sobolev Spaces}

In this section, we establish a priori estimates on smooth solutions to ($SQG$) in uniformly local Sobolev spaces.  We prove the following theorem.
\begin{theorem}\label{mainQGEversion}
Assume $d=2$ and $s\geq 3$ is an integer.  Let $(u,\theta)$ be a solution to ($SQG$) on $[0,T]$ as given in Theorem \ref{SQGHolder} with \Holder exponent $r=s+2$.  Then 
\begin{equation*}
\| \theta(t) \|^2_{H^s_{ul}} \leq \| \theta^0 \|^2_{H^s_{ul}}\exp \left( C\int_0^t (\| u(\tau) \|_{\tilde{C}^1} + \|\nabla\theta(\tau)\|_{L^{\infty}}) \, d\tau \right).
\end{equation*}
\end{theorem}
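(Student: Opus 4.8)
The plan is a localized energy estimate, the only non-routine ingredient being the use of the Serfati identity (\ref{MainThmSerfatiSQG}) to control $u$ by $\theta$. Since the solution of Theorem~\ref{SQGHolder} with $r=s+2$ is $C^{s+2}$ in space (and one may as well take the data smooth, this being an a priori estimate later combined with mollification), all manipulations below are rigorous. I would work with the equivalent $H^s_{ul}$-norm of Proposition~\ref{equivnorms}, the one built from $\sum_{|\alpha|\le s}\sup_{x}\|\phi_x D^\alpha\theta\|_{L^2}$. Fix $x\in\R^2$ and a multi-index $\alpha$ with $|\alpha|\le s$; applying $D^\alpha$ to the transport equation and pairing with $\phi_x^2 D^\alpha\theta$ in $L^2(\R^2)$ gives
\[
\tfrac12\tfrac{d}{dt}\|\phi_x D^\alpha\theta\|_{L^2}^2
= -\int_{\R^2}\phi_x^2\, D^\alpha\theta\,(u\cdot\nabla D^\alpha\theta)
 - \int_{\R^2}\phi_x^2\, D^\alpha\theta\,[D^\alpha,u\cdot\nabla]\theta .
\]
Integrating by parts in the first integral and using $\dv u=0$, only the piece in which $u\cdot\nabla$ falls on $\phi_x^2$ remains; since $\nabla\phi_x$ is bounded with support in a fixed ball, this is $\le C\|u\|_{L^\infty}\|D^\alpha\theta\|_{L^2_{ul}}^2\le C\|u\|_{\tilde{C}^1}\|\theta\|_{H^s_{ul}}^2$. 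For the second integral, Cauchy--Schwarz followed by Lemma~\ref{commutator}(iii) (applied to each $u_i\,\partial_i\theta$, with $f=u_i\in\tilde{C}^1\cap H^s_{ul}$ and $g=\partial_i\theta\in L^\infty\cap H^{s-1}_{ul}$) yields the bound $C\|\phi_x D^\alpha\theta\|_{L^2}\bigl(\|u\|_{\tilde{C}^1}\|\theta\|_{H^s_{ul}}+\|\nabla\theta\|_{L^\infty}\|u\|_{H^s_{ul}}\bigr)$.

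It remains to bound $\|u(t)\|_{H^s_{ul}}$ by $\|\theta(t)\|_{H^s_{ul}}$, for which I would appeal to (\ref{MainThmSerfatiSQG}). Using Remark~\ref{R:Convolutions} to write the near-field term as $\nabla^\perp((a\Phi)*(\theta-\theta^0))$, Lemma~\ref{L:ConvHsBound} (with $\Psi=\Phi$) bounds it in $H^s_{ul}$ by $C(\|\theta\|_{H^s_{ul}}+\|\theta^0\|_{H^s_{ul}})$. In the far-field term one moves all of the (up to $s$) derivatives onto the kernel: $D^\beta(\nabla\nabla^\perp((1-a)\Phi))$ vanishes near the origin, is smooth, and decays like $|x|^{-3-|\beta|}$ at infinity, hence lies in $L^1(\R^2)$ for every $\beta$; the Minkowski/Young bound for convolution on $L^2_{ul}$ then gives $\|\int_0^t(\nabla\nabla^\perp((1-a)\Phi))*(\theta u)\,ds\|_{H^s_{ul}}\le C\int_0^t\|\theta\|_{L^\infty}\|u\|_{L^\infty}\,ds$, controlled uniformly on $[0,T]$ by Theorem~\ref{SQGHolder}. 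Hence $\|u(t)\|_{H^s_{ul}}\le C\|\theta(t)\|_{H^s_{ul}}+K$, with $K$ depending only on the data and $T$. Inserting this into the two estimates above, summing over $|\alpha|\le s$, taking the supremum over $x$ and integrating in $t$, and absorbing the cross terms via Young's inequality (the additive pieces coming from $K$ being absorbed using the shortness of $T$ and the fact that all data norms coincide at $t=0$), one arrives at
\[
\|\theta(t)\|_{H^s_{ul}}^2 \le \|\theta^0\|_{H^s_{ul}}^2 + C\int_0^t\bigl(\|u(\tau)\|_{\tilde{C}^1}+\|\nabla\theta(\tau)\|_{L^\infty}\bigr)\|\theta(\tau)\|_{H^s_{ul}}^2\,d\tau,
\]
and Gr\"onwall's lemma gives the stated estimate.

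The main obstacle is the top-order commutator contribution $D^\alpha u\cdot\nabla\theta$ with $|\alpha|=s$: for non-decaying data one cannot simply bound $\|D^\alpha u\|_{L^2_{ul}}$ by $\|\theta\|_{H^s_{ul}}$ via boundedness of Riesz transforms, which is delicate on $L^2_{ul}$. The Serfati identity is the substitute, and the care lies in organizing its pieces---the compactly supported, Calder\'on--Zygmund-type near-field kernel (harmless on $L^2_{ul}$ after localization by $\phi_{x,8}$), the fast-decaying $L^1$ far-field kernel, and the initial-data term---so that the resulting bound on $\|u\|_{H^s_{ul}}$ feeds a purely multiplicative Gr\"onwall inequality, as the clean form of the conclusion requires.
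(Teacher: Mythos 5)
Your localized energy estimate is exactly the paper's: the same multiplication by the bump $\phi_x$, the same vanishing of the transport term, the same $C\|u\|_{L^\infty}\|\theta\|_{H^s_{ul}}^2$ bound for the term where $u\cdot\nabla$ hits the cutoff, and the same use of Lemma \ref{commutator} for the commutator, leading to the inequality $\|\theta(t)\|_{H^s_{ul}}^2 \le \|\theta^0\|_{H^s_{ul}}^2 + C\int_0^t\bigl(\|u\|_{\tilde{C}^1}\|\theta\|_{H^s_{ul}}^2 + \|\theta\|_{H^s_{ul}}\|\nabla\theta\|_{L^\infty}\|u\|_{H^s_{ul}}\bigr)d\tau$. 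The gap is in how you close it. You bound $\|u(t)\|_{H^s_{ul}}$ through the \emph{time-integrated} Serfati identity (\ref{MainThmSerfatiSQG}), which gives $\|u(t)\|_{H^s_{ul}} \le C\|\theta(t)\|_{H^s_{ul}} + K$ with $K$ containing $\|u^0\|_{H^s_{ul}}$, $\|\theta^0\|_{H^s_{ul}}$ and the time integral of the far-field term, hence depending on the data and on $T$ (and note $u^0\in H^s_{ul}$ is not even a hypothesis of the theorem). Feeding this into the cross term leaves a contribution $K\|\nabla\theta\|_{L^\infty}\|\theta\|_{H^s_{ul}}$ that cannot be absorbed into $C(\|u\|_{\tilde{C}^1}+\|\nabla\theta\|_{L^\infty})\|\theta\|_{H^s_{ul}}^2$ with a constant independent of the solution: whether you use Young's inequality or the embedding $\|\nabla\theta\|_{L^\infty}\le C\|\theta\|_{H^s_{ul}}$, Gr\"onwall then produces either an extra additive data-dependent term or an extra $CKt$ in the exponent. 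The appeal to ``shortness of $T$'' and ``data norms coincide at $t=0$'' does not repair this, since $T$ is fixed by Theorem \ref{SQGHolder} and $K$ is not controlled by $\|\theta(t)\|_{H^s_{ul}}$. What you obtain is a weaker, data-dependent estimate (which would in fact still suffice for the uniform bounds in Section \ref{S:SQGSobolev}), but it is not the stated inequality.

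The paper closes the estimate with an \emph{instantaneous} bound instead: from the constitutive relation $\dot{\Delta}_j u(t) = \dot{\Delta}_j \nabla^{\perp}(-\Delta)^{-1/2}\theta(t)$ (Lemma \ref{BMOSerfati}, passed to the limit along the approximating sequence), one applies $D^\gamma$, splits the kernel with the cutoff $a$, and after a short Liouville-type argument eliminating a polynomial ambiguity obtains, for $2\le|\gamma|\le s$, the pointwise identity (\ref{BSReplacementQGEGradu}): $D^{\gamma} u = (a\Phi) \ast \nabla^{\perp}D^{\gamma}\theta + [D^{\gamma}(\nabla^{\perp}((1-a)\Phi))] \ast \theta$. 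Lemma \ref{L:ConvHsBound} and $\|u\|_{L^2_{ul}}+\|\nabla u\|_{L^2_{ul}}\le C\|u\|_{\tilde{C}^1}$ then give $\|u(t)\|_{H^s_{ul}} \le C(\|\theta(t)\|_{H^s_{ul}} + \|u(t)\|_{\tilde{C}^1})$ with $C$ depending only on $s$; since $\|u\|_{\tilde{C}^1}$ already sits in the exponent, the Gr\"onwall inequality stays purely multiplicative and the stated estimate follows. To fix your proof, replace the bound coming from (\ref{MainThmSerfatiSQG}) by an instantaneous bound of this type, i.e.\ differentiate the constitutive law at fixed time rather than the time-integrated Serfati identity; your treatment of the near- and far-field kernels then goes through essentially unchanged.
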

\begin{proof}

Set $W=D^{\alpha}\theta$ with $0\leq |\alpha| \leq s$ and $s \geq  3$.  Apply $D^{\alpha}$ to $(\ref{SQG})_1$ to get 
\begin{equation}\label{vort1QGE}
\partial_t W + u\cdot\nabla W =   F,
\end{equation}
where 
\begin{equation*}
\begin{split}
& F = u\cdot\nabla W - D^{\alpha}(u\cdot\nabla \theta).\\
\end{split}
\end{equation*}  
Multiplying (\ref{vort1QGE}) by $\phi_{x}$ gives 
\begin{equation}\label{vort2QGE}
\begin{split}
&\partial_t (\phi_{x}W) + u\cdot\nabla (\phi_{x}W)   = (u\cdot\nabla\phi_{x})W+ \phi_{x}F .
\end{split}
\end{equation} 
After multiplying (\ref{vort2QGE}) by $\phi_{x}W$ and integrating, we conclude that
\begin{equation*}
\begin{split}
&\int_{\R^2} \phi_{x}W  \partial_t (\phi_{x}W)  + \int_{\R^2} \phi_{x}W  (u\cdot\nabla (\phi_{x}W))  \\
&\qquad   = \int_{\R^2} \phi_{x}W  (u\cdot\nabla\phi_{x})W+ \int_{\R^2} \phi_{x}W  \phi_{x} F .
\end{split}
\end{equation*}
Now observe that 
\begin{equation*}
\int_{\R^2} \phi_{x}W  \partial_t (\phi_{x}W) = \frac{1}{2} \frac{d}{dt} \| \phi_{x} W \|_{L^2}^2.
\end{equation*}
Moreover, one can show using the divergence-free property of $u$ and integration by parts that  
\begin{equation*}
\int_{\R^2} \phi_{x}W  (u\cdot\nabla (\phi_{x}W)) = 0.
\end{equation*}
By properties of $\phi_x$ and \Holders inequality, we also have
\begin{equation*}
\begin{split}
&\int_{\R^2} \phi_{x}W (u\cdot\nabla\phi_{x})W = \int_{\R^2} \phi_{x}W (u\cdot\nabla\phi_{x})\phi_{x,2}W \\
&\qquad \leq \| \phi_{x,2}W \|_{L^2}^2 \| u\cdot\nabla\phi_{x} \|_{L^{\infty}} \leq C \| u \|_{L^{\infty}}\| \theta \|^2_{H^s_{ul}}. 
\end{split}
\end{equation*}
Finally, another application of \Holders inequality gives 
\begin{equation*}
\int_{\R^2} \phi_{x}W \phi_{x} F \leq \| \phi_{x}W \|_{L^2} \| \phi_{x} F \|_{L^2}.
\end{equation*}
We apply Lemma \ref{commutator} to $\| \phi_{x} F \|_{L^2}$ with $f=u$ and $g=\nabla \theta$.  This gives
\begin{equation*}
\begin{split}
&\| \phi_{x} F \|_{L^2} \leq %
C\left( \| u \|_{\tilde{C}^1} \| \theta \|_{H^{s}_{ul}} + \| \nabla\theta \|_{L^{\infty}}\| u\|_{H^s_{ul}} \right). 
\end{split}
\end{equation*}
Combining the above estimates and integrating in time gives
\begin{equation*}
\begin{split}
&\| \phi_{x} W(t) \|_{L^2}^2  \leq C\| \phi_{x} W(0) \|_{L^2}^2\\
&\qquad + C\int_0^t \left( \| u \|_{L^{\infty}}\| \theta \|^2_{H^s_{ul}} + \| \theta \|_{H^s_{ul}}( \| u \|_{\tilde{C}^1} \| \theta \|_{H^{s}_{ul}} + \| \nabla\theta \|_{L^{\infty}}\| u\|_{H^s_{ul}} )\right) \, d\tau.
\end{split}
\end{equation*}
We now take the supremum of both sides of the inequality over $x\in\R^2$.  We conclude that
\begin{equation}\label{velvortboundQGE}
\begin{split}
&\| \theta(t) \|^2_{H^s_{ul}} \leq C\| \theta^0 \|^2_{H^s_{ul}}\\
&\qquad + C\int_0^t\left(\| u (\tau)\|_{\tilde{C}^1}\| \theta(\tau) \|^2_{H^{s}_{ul}} + \| \theta(\tau) \|_{H^{s}_{ul}}\|\nabla\theta (\tau)\|_{L^{\infty}}\| u(\tau) \|_{H^s_{ul}}  \right) \, d\tau. 
\end{split}
\end{equation}

It remains to close the estimate and apply \Gronwalls lemma.  To do this, note that, for each fixed $t\in[0,T]$, the approximating sequences $(\theta_n(t))$ and $(u_n(t))$ from the proof of Theorem \ref{SQGHolder} converge to $\theta(t)$ and $u(t)$, respectively, in $L^{\infty}(\R^2)$.   This convergence, along with Lemma \ref{L:CZLike}, allow us to pass to the limit in Lemma \ref{BMOSerfati}.  This gives, for all $j\in\Z$,
\begin{equation*}
\dot{\Delta}_j  u = \dot{\Delta}_j (\nabla^{\perp} (-\Delta)^{-1/2}\theta).
\end{equation*}
Applying a differential operator $D^{\gamma}$ with $1\leq |\gamma|\leq s$ gives 
\begin{equation*}
\dot{\Delta}_j D^{\gamma}  u = \dot{\Delta}_jD^{\gamma} (\nabla^{\perp} (-\Delta)^{-1/2}\theta).
\end{equation*}
This implies that, for $|\gamma|\geq 1$,
\begin{equation*}
\begin{split}
&D^{\gamma} u = D^{\gamma-1}P + (a\Phi) \ast \nabla^{\perp}D^{\gamma}\theta + [D^{\gamma}(\nabla^{\perp}((1-a)\Phi))] \ast \theta
\end{split}
\end{equation*}
for almost every $x\in\R^2$, where $P$ is a polynomial.  But $D^{\gamma}u$ and $D^{\gamma+1}\theta$ are in $C([0,T]; L^{\infty}(\R^2))$ for each $|\gamma|\leq s$, which implies that $P$ is a constant.  We conclude that for $2\leq |\gamma| \leq s$, $D^{\gamma}u$ and $D^{\gamma}\theta$ satisfy 
\begin{equation}\label{BSReplacementQGEGradu}
D^{\gamma} u = (a\Phi) \ast \nabla^{\perp}D^{\gamma}\theta + [D^{\gamma}(\nabla^{\perp}((1-a)\Phi))] \ast \theta
\end{equation} 
for almost every $x\in\R^2$.  Applying Lemma \ref{L:ConvHsBound}, for any multi-index $\beta$ with $|\beta|=2$,
\begin{equation}\label{SQGCZest}
\| D^{\beta} u \|_{H^{s-2}_{ul}} \leq C(\| \theta \|_{H^s_{ul}} + \| \theta \|_{L^{\infty}}) \leq C\| \theta \|_{H^s_{ul}}, 
\end{equation}
where we applied the Sobolev embedding theorem to get the last inequality.  This estimate, combined with $\| u \|_{L^2_{ul}} \leq C\| u \|_{L^{\infty}}$ and $\| \nabla u \|_{L^2_{ul}} \leq \| \nabla u \|_{L^{\infty}}$, gives
\begin{equation}\label{velocityHsulBound}
\| u \|_{H^{s}_{ul}}  \leq C(\| \theta \|_{H^s_{ul}} + \| u \|_{\tilde{C}^1} ).
\end{equation}
Substituting this estimate into (\ref{velvortboundQGE}) gives
\begin{equation}\label{preBKM}
\begin{split}
&\| \theta(t) \|^2_{H^s_{ul}} \leq \| \theta^0 \|^2_{H^s_{ul}}\\
& + C\int_0^t \left(\| u(\tau) \|_{\tilde{C}^1}\| \theta(\tau) \|^2_{H^{s}_{ul}} + \| \theta (\tau)\|_{H^s_{ul}}\|\nabla\theta(\tau)\|_{L^{\infty}}(\| \theta (\tau)\|_{H^s_{ul}} + \| u(\tau) \|_{\tilde{C}^1})  \right) \, d\tau\\
&\qquad \leq \| \theta^0 \|^2_{H^s_{ul}} + C\int_0^t (\| u(\tau) \|_{\tilde{C}^1} + \|\nabla\theta(\tau)\|_{L^{\infty}})\| \theta(\tau) \|^2_{H^{s}_{ul}}    \, d\tau\\
&\qquad + C\int_0^t  \| \theta (\tau)\|_{H^s_{ul}}\|\nabla\theta(\tau)\|_{L^{\infty}}\| u(\tau) \|_{\tilde{C}^1}    \, d\tau\\
&\qquad \leq \| \theta^0 \|^2_{H^s_{ul}} + C\int_0^t (\| u(\tau) \|_{\tilde{C}^1} + \|\nabla\theta(\tau)\|_{L^{\infty}})\| \theta(\tau) \|^2_{H^{s}_{ul}}    \, d\tau,
\end{split}
\end{equation}
where, to get the last inequality, we applied the Sobolev embedding theorem to conclude that $\|\nabla\theta(\tau)\|_{L^{\infty}} \leq C\| \theta(\tau)\|_{H^s_{ul}}$.  By \Gronwalls Lemma,
\begin{equation*}
\| \theta(t) \|^2_{H^s_{ul}} \leq \| \theta^0 \|^2_{H^s_{ul}}\exp \left( C\int_0^t (\| u(\tau) \|_{\tilde{C}^1} + \|\nabla\theta(\tau)\|_{L^{\infty}}) \, d\tau \right).
\end{equation*}  

This completes the proof of Theorem \ref{mainQGEversion}.
\end{proof}

%%%%%%%%new section - existence in unif local spaces
\subsection{Existence of solutions}\label{S:Approx}

In this section, we prove the following theorem.
\begin{theorem}\label{SQGUnifLocalSobolev}
Let $s\geq 3$.  Let $\theta^0$ be a function in $H^s_{ul}(\R^2)$, and let $u^0$ in $H^s_{ul}(\R^2)$ satisfy
\begin{equation*}
u^0 =  \nabla^{\perp} (-\Delta)^{-1/2}  {\theta}^0 \qquad\text{  in  }\dot{C}^{\alpha}(\R^2),
\end{equation*}
where $\alpha>1$ satisfies the embedding $H^s_{ul}(\R^2)\hookrightarrow {C}^{\alpha}(\R^2)$.  There exists $T>0$ and a unique solution $(u, \theta)$ to 
\begin{equation}\label{SQG1}
\begin{split}
&\partial_t \theta + u\cdot\nabla\theta = 0,\\
&(u, \theta)|_{t=0} = (u^0, \theta^0)
\end{split}
\end{equation}   
satisfying 
\begin{equation*}
\begin{split}
&\theta \in L^{\infty}(0,T; H^s_{ul}(\R^2)) \cap Lip([0,T] ; H^{s-1}_{ul}(\R^2)),\\
&u \in L^{\infty}(0,T; H^s_{ul}(\R^2)). 
\end{split}
\end{equation*}     
Moreover, $(u,\theta)$ satisfies
\begin{equation}\label{hsulversion}
\begin{split}
&u(t) = u^0
+ ((a\Phi)) \ast \nabla^{\perp}(\theta(t) - \theta^0)
- \int_0^t (\nabla\nabla^{\perp}((1-a)\Phi)) \stardot (\theta u) \, ds.\\
\end{split}
\end{equation}  
\end{theorem}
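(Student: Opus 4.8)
The plan is the one sketched in the introduction: regularize the data, invoke the \Holder theory of Theorem \ref{SQGHolder}, obtain bounds uniform in the regularization parameter from the a priori estimate of Theorem \ref{mainQGEversion}, and pass to the limit. Concretely, set $\theta^0_n = S_n \theta^0$ and $u^0_n = S_n u^0$. By Lemma \ref{smoothcutoffbounded} these are bounded in $H^s_{ul}(\R^2)$ uniformly in $n$; being band-limited and bounded they lie in $C^r(\R^2)\cap\dot{C}^r(\R^2)$ for every $r$; they converge to $\theta^0$, $u^0$ in $H^{s-1}_{ul}(\R^2)$ (the removed frequencies are $\gtrsim 2^n$, so after localization the difference is $O(2^{-n}\|\cdot\|_{H^s_{ul}})$ uniformly); and applying the Fourier multiplier $S_n$ to the hypothesis $u^0 = \nabla^{\perp}(-\Delta)^{-1/2}\theta^0$ in $\dot{C}^\alpha$ gives $\dot{\Delta}_j u^0_n = \dot{\Delta}_j \nabla^{\perp}(-\Delta)^{-1/2}\theta^0_n$ for all $j$, so the compatibility hypothesis of Theorem \ref{SQGHolder} holds for $(u^0_n,\theta^0_n)$ with any \Holder exponent. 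Applying Theorem \ref{SQGHolder} with $r=s+2$ produces, for each $n$, a unique solution $(u^n,\theta^n)$ on a maximal interval $[0,T^*_n)$; since $C^{r'}(\R^2)\hookrightarrow\tilde{C}^s(\R^2)\hookrightarrow H^s_{ul}(\R^2)$ for some $r'\in(s,s+2)$, we have $\theta^n,u^n\in C([0,T];H^s_{ul}(\R^2))$ on every $[0,T]\subset[0,T^*_n)$.

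The crux is to show the $[0,T^*_n)$ contain a common interval $[0,T]$ with bounds uniform in $n$ — nontrivial because the \Holder lifespan from Theorem \ref{SQGHolder} scales like $1/(\|u^0_n\|_{L^\infty}+\|\theta^0_n\|_{C^{s+2}})$ and $\|\theta^0_n\|_{C^{s+2}}\to\infty$. On $[0,T^*_n)$, Theorem \ref{mainQGEversion} controls $\|\theta^n(t)\|_{H^s_{ul}}^2$ by $\|\theta^0_n\|_{H^s_{ul}}^2$ times $\exp(C\int_0^t(\|u^n\|_{\tilde{C}^1}+\|\nabla\theta^n\|_{L^\infty})\,d\tau)$. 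Now $\|\nabla\theta^n\|_{L^\infty}\le C\|\theta^n\|_{H^s_{ul}}$ by the embedding $H^s_{ul}(\R^2)\hookrightarrow\tilde{C}^1(\R^2)$ of Lemma \ref{SobolevEmbedding} ($s\ge 3$); $\|\theta^n(t)\|_{L^\infty}=\|\theta^0_n\|_{L^\infty}\le CD_0$ from transport, with $D_0:=\|u^0\|_{H^s_{ul}}+\|\theta^0\|_{H^s_{ul}}$; and feeding these into the Serfati identity (\ref{MainThmSerfatiSQG}), using Lemma \ref{L:WasSmallConst} (with Remark \ref{R:Convolutions} to shift derivatives) and the Sobolev embedding for the $(a\Phi)$ term and the $L^1$ bound (\ref{e:GPhiBounds}) for the far-field term, one gets $\|u^n(t)\|_{\tilde{C}^1}\le CD_0+C\|\theta^n(t)\|_{H^s_{ul}}+C\int_0^t(\|\theta^n(\tau)\|_{H^s_{ul}}+D_0)\|u^n(\tau)\|_{\tilde{C}^1}\,d\tau$. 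Running \Gronwalls lemma on this coupled pair shows that if $\|\theta^n(\cdot)\|_{H^s_{ul}}\le 2D_0$ on $[0,t]$ with $t$ below a threshold $T=T(D_0)>0$, then in fact $\|\theta^n(\cdot)\|_{H^s_{ul}}\le\tfrac32 D_0$ and $\|u^n(\cdot)\|_{\tilde{C}^1}\le CD_0$ there; since $t\mapsto\|\theta^n(t)\|_{H^s_{ul}}$ is continuous, a continuity argument promotes this to the unconditional bound $\|\theta^n(t)\|_{H^s_{ul}}\le\tfrac32 D_0$ on $[0,\min(T,T^*_n))$. On that interval $\|\nabla u^n\|_{L^\infty},\|\nabla\theta^n\|_{L^\infty}\lesssim D_0$, so (\ref{scalarholder}) gives $\tfrac{d}{dt}\|\theta^n\|_{C^{s+2}}\lesssim D_0(\|\theta^n\|_{C^{s+2}}+D_0)$, whence $\|\theta^n(t)\|_{C^{s+2}}$ stays finite and a standard continuation argument forces $T^*_n\ge T$. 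Thus $(u^n,\theta^n)$ lives on $[0,T]$ with $\|\theta^n\|_{L^\infty(0,T;H^s_{ul})}+\|u^n\|_{L^\infty(0,T;H^s_{ul})}\le M$ uniformly (the $u^n$ bound from (\ref{velocityHsulBound})) and, differentiating $(\ref{SQG})_1$ and using Lemma \ref{commutator}(i), $\|\partial_t\theta^n\|_{L^\infty(0,T;H^{s-1}_{ul})}\le CM^2$ uniformly.

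For convergence put $\eta=\theta^m-\theta^n$, $v=u^m-u^n$; then $\partial_t\eta+u^m\cdot\nabla\eta=-\,v\cdot\nabla\theta^n$, and subtracting the Serfati identities, $v(t)=v(0)+(a\Phi)*\nabla^{\perp}(\eta(t)-\eta(0))-\int_0^t(\nabla\nabla^{\perp}((1-a)\Phi))\stardot(\eta u^m+\theta^n v)\,d\tau$. The localized energy estimate of the proof of Theorem \ref{mainQGEversion}, run at regularity $s-1$ (the source $v\cdot\nabla\theta^n$ costs $\lesssim M\|\eta\|_{H^{s-1}_{ul}}\|v\|_{H^{s-1}_{ul}}$ by Lemma \ref{commutator}(i)), bounds $\|\eta(t)\|_{H^{s-1}_{ul}}^2$ by $\|\eta(0)\|_{H^{s-1}_{ul}}^2$ plus a time-integral of $\|\eta\|_{H^{s-1}_{ul}}^2+\|v\|_{H^{s-1}_{ul}}^2$; and Lemma \ref{L:ConvHsBound} for the $(a\Phi)$ term, with Lemma \ref{commutator}(i) and the $L^1$ decay of $\nabla\nabla^{\perp}((1-a)\Phi)$ for the last term, give $\|v(t)\|_{H^{s-1}_{ul}}\le\|v(0)\|_{H^{s-1}_{ul}}+C\|\eta(t)\|_{H^{s-1}_{ul}}+C\|\eta(0)\|_{H^{s-1}_{ul}}+CM\int_0^t(\|\eta\|_{H^{s-1}_{ul}}+\|v\|_{H^{s-1}_{ul}})\,d\tau$. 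Exactly as in the Cauchy step of the proof of Theorem \ref{SQGHolder}, substituting the $\eta$-estimate for the $\|\eta(t)\|_{H^{s-1}_{ul}}$ appearing here — which does not reintroduce $\|\eta(t)\|$ or $\|v(t)\|$ outside an integral — closes the loop by \Gronwalls lemma, so $\|\eta\|_{C([0,T];H^{s-1}_{ul})}+\|v\|_{C([0,T];H^{s-1}_{ul})}\to 0$ as $m,n\to\infty$ (using $\theta^0_n\to\theta^0$, $u^0_n\to u^0$ in $H^{s-1}_{ul}$). Hence $(u^n,\theta^n)\to(u,\theta)$ in $C([0,T];H^{s-1}_{ul})$; the uniform $L^\infty_t H^s_{ul}$ bound with weak-$*$ compactness puts $u,\theta\in L^\infty(0,T;H^s_{ul})$, the uniform Lipschitz bound puts $\theta\in Lip([0,T];H^{s-1}_{ul})$, and this convergence lets us pass to the limit in $(\ref{SQG})_1$ and, as in the derivation of (\ref{MainThmSerfatiSQG}), in the Serfati identity, yielding (\ref{hsulversion}); uniqueness is the same difference estimate applied to two solutions.

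The main obstacle is this interplay between the degenerating \Holder lifespan and the a priori estimate: because Theorem \ref{SQGHolder} cannot be quoted on a fixed interval, one must close the nonlinearly coupled system for $\|\theta^n\|_{H^s_{ul}}$, $\|u^n\|_{\tilde{C}^1}$, $\|\nabla\theta^n\|_{L^\infty}$ — which works only because transport freezes $\|\theta^n\|_{L^\infty}$ and the Serfati identity recovers the low-order part of $u^n$ from $\theta^n$ — and then invoke a continuation criterion to propagate the uniform bound onto a common time interval. A recurring technical point throughout is keeping every commutator with the localizer $\phi_x$ controlled uniformly in $x$ in the energy and difference estimates.
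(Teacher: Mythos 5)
Your overall architecture---regularize the data with $S_n$, solve with Theorem \ref{SQGHolder}, get bounds uniform in $n$ from Theorem \ref{mainQGEversion}, and pass to the limit---is the paper's. The two middle steps, however, are handled differently, and one of them has a genuine gap. On the uniform lifespan: you apply Theorem \ref{SQGHolder} at exponent $r=s+2$, where $\|\theta^0_n\|_{C^{s+2}}\to\infty$, and then rebuild a common interval through a coupled bootstrap for $\|\theta^n\|_{H^s_{ul}}$ and $\|u^n\|_{\tilde{C}^1}$ plus a continuation argument. This can be made to work (and you correctly notice a persistence-of-regularity point the paper passes over quickly), but it is far heavier than necessary and leans on a continuation criterion that Theorem \ref{SQGHolder} does not actually furnish: restarting the local theory at a later time requires re-verifying the compatibility hypothesis there, via the limiting form of Lemma \ref{BMOSerfati}. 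The paper's device is simpler: since $(S_n\theta^0, S_nu^0)$ are uniformly bounded in $H^s_{ul}$ (Lemma \ref{smoothcutoffbounded}), they are uniformly bounded in $C^\alpha$ by Lemma \ref{SobolevEmbedding}, so one quotes Theorem \ref{SQGHolder} at the exponent $\alpha$; the bound (\ref{shorttimebound}) then gives a lifespan and $C([0,T];C^\alpha)$ control uniform in $n$ at once, Lemma \ref{holderrieszbound} upgrades this to a uniform $\tilde{C}^1$ bound on $u_n$, and Theorem \ref{mainQGEversion} closes with no bootstrap. So your premise that Theorem \ref{SQGHolder} ``cannot be quoted on a fixed interval'' is not correct---it can, at exponent $\alpha$.

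The genuine gap is in the convergence step. You claim $(\theta^n)$ is Cauchy in $C([0,T];H^{s-1}_{ul})$ by running the localized energy estimate for $\eta=\theta^m-\theta^n$ at regularity $s-1$ and closing with Lemma \ref{commutator}. At orders $|\alpha|\le s-1$ the transport commutator produces the term $\|\nabla\eta\|_{L^\infty}\|u^m\|_{H^{s-1}_{ul}}$ (Lemma \ref{commutator}(iii) with $f=u^m$, $g=\nabla\eta$), and for \Gronwalls lemma to force $\eta\to0$ you must dominate $\|\nabla\eta\|_{L^\infty}$ by $C\|\eta\|_{H^{s-1}_{ul}}$. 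At the endpoint $s=3$ this fails: $H^{2}_{ul}(\R^2)$ does not embed into $\tilde{C}^1$, since Lemma \ref{SobolevEmbedding} requires $2m>d$ strictly. Bounding $\|\nabla\eta\|_{L^\infty}$ by the uniform bound $CM$ instead leaves a source that does not vanish as $m,n\to\infty$, and interpolating (Agmon-type) produces a non-Lipschitz Gr\"onwall inequality of the form $y'\le Cy+Cy^{3/4}$, which does not propagate smallness from the data; so, as written, the $H^{s-1}_{ul}$ contraction does not close in the main case $s=3$. The paper avoids any uniformly local contraction: it combines the uniform $H^s_{ul}$ bounds with Rellich's theorem applied to $\phi_R\theta_n(t)$, a diagonalization in $R$ and $t$, and the time-equicontinuity (\ref{uniftimederbound}) to extract convergence of $(\phi_R\theta_n)$ in $C([0,T];H^{s-1}(\R^2))$ and of $(\phi_Ru_n)$ in $C([0,T];H^{s-2}(\R^2))$, then identifies the limit and recovers $L^\infty(0,T;H^s_{ul})$ by weak-$*$ arguments. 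If you prefer to keep a contraction, run it at a lower level---for example in $L^2_{ul}$, where the transport term needs no commutator, $(a\Phi)*\nabla^\perp\eta$ is handled by Lemma \ref{L:ConvHsBound} with Remark \ref{R:Convolutions}, and the far-field term by an $L^1$-kernel Minkowski bound as in Lemma \ref{smoothcutoffbounded}---and then interpolate with the uniform $H^s_{ul}$ bound; note that this (like your version) also uses the convergence $S_n\theta^0\to\theta^0$ in the uniformly local norm, which is true but requires a commutator of $S_n$ with $\phi_x$ beyond what Lemma \ref{smoothcutoffbounded} provides, whereas the paper's compactness route needs only boundedness of the regularized data.
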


\begin{proof}
For the proof of Theorem \ref{SQGUnifLocalSobolev}, we will construct an approximation sequence of smooth solutions $(u_n, \theta_n)$ given by Theorem \ref{SQGHolder} on $[0,T]$.  We will then use Theorem \ref{mainQGEversion} to establish uniform bounds on $(u_n, \theta_n)$ in the $H^s_{ul}$ norm, which will allow us to pass to the limit to obtain (\ref{SQG1}).\\
 \\
{\bf Approximation Sequence and Uniform Bounds.}  Consider the sequences $u^0_n = S_n u^0$ and $\theta^0_n=S_n \theta^0$. We see that for each $n$, $u^0_n$ and $\theta_n^0$ belong to $C^r(\R^2)$ for every $r>0$. Moreover, by Lemma \ref{smoothcutoffbounded}, there exists $\tilde{C}>0$, depending only on $s$, such that
 \begin{equation}\label{initialdatabounds}
 \begin{split}
 &\| u^0_n \|_{H^s_{ul}} \leq \tilde{C}\|u^0 \|_{H^s_{ul}},\\
 &\| \theta^0_n \|_{H^s_{ul}} \leq \tilde{C}\|\theta^0 \|_{H^s_{ul}}.
 \end{split}
 \end{equation}    
We claim there exists a single $T>0$ such that both $(u_n)$ and $(\theta_n)$ are uniformly bounded in $L^\iny(0, T; H^s_{ul}(\R^2))$.  To see that such a $T$ exists, note that Lemma \ref{SobolevEmbedding} and  construction of $u^0_n$ and $\theta^0_n$ give an $\alpha > 1$ such that both $(u^0_n)$ and $(\theta_n^0)$ are uniformly bounded in $C^{\alpha}(\R^2)$.  Thus, by Theorem \ref{SQGHolder}, a solution $(u_n,\theta_n)$ exists in $C^{\alpha}(\R^3)$ at least on $[0,T_n]$, with $(u_n,\theta_n)$ satisfying (\ref{shorttimebound}).  Choose $T>0$ such that, for every $n$, $T\leq T_n$ and 
$$ \frac{1}{2C} \leq T_n(\| u_n^0 \|_{L^{\infty}} + \| \theta^0_n \|_{C^{\alpha}}) \leq T(\| u^0 \|_{L^{\infty}} + \| \theta^0 \|_{C^{\alpha}}) < \frac{1}{C},$$
where $C$ is as in (\ref{shorttimebound}).  We have that for every $n$, $(u_n,\theta_n)$ is a solution satisfying Theorem \ref{SQGHolder} on $[0,T]$.  In particular, by (\ref{shorttimebound}), 
\begin{equation}\label{Holderuniform}
\| u_n \|_{C([0,T]; L^{\infty})} + \| \theta_n \|_{C([0,T]; C^{\alpha})} \leq \frac{C(\| u^0 \|_{L^{\infty}} + \| \theta^0 \|_{C^{\alpha}})}{1- CT(\| u^0 \|_{L^{\infty}} + \| \theta^0 \|_{C^{\alpha}})},
\end{equation}
and $\| u_n \|_{C([0,T]; L^{\infty})}$, $\| \theta_n \|_{C([0,T]; C^{\alpha})}$ are therefore uniformly bounded in $n$.  

To establish a uniform bound on $\| u_n \|_{C([0,T]; C^{\alpha})}$ in $n$, note that by Lemma \ref{holderrieszbound},
 \begin{equation}\label{velocityHolderbound}
 \| u_n \|_{C([0,T]; C^{\alpha})} \leq C(\| u_n \|_{C([0,T]; L^{\infty})} + \| \theta_n \|_{C([0,T]; C^{\alpha})}).
 \end{equation}  
 Then the uniform bound on $\| u_n \|_{C([0,T]; C^{\alpha})}$ again follows from (\ref{Holderuniform}).

Theorem \ref{mainQGEversion}, (\ref{initialdatabounds}), and an application of the uniform bound on $\| u_n \|_{C([0,T]; C^{\alpha})}$ imply that there exists a constant $C>0$, depending only on the initial data and $T$, such that
\begin{equation}\label{vortunifhsulbound}
\| \theta_n \|_{C([0,T]; H^s_{ul})} \leq C.
\end{equation}    
This bound, combined with the estimate (\ref{velocityHsulBound}), imply that there exists a constant $C>0$, depending only on the initial data and $T$, such that
\begin{equation}\label{velunifhsulbound}
\| u_n \|_{C([0,T]; H^s_{ul})} \leq C
\end{equation}    
as well.

To simplify notation in what follows, we set $\phi_R = \phi_{0,R}$, where $\phi_{0,R}$ is as in (\ref{bumpnotation}).  \\

\noindent {\bf ($\bm{\phi_R\theta_n}$) is Cauchy.}  We now show that ($\phi_R\theta_n$) is a Cauchy sequence in the space $C([0,T];H^{s-1}(\R^2))$ for every $R>0$.  For some $\alpha>1$ and for each $n$, we know that $u_n$, $\theta_n\in C([0,T];C^{\alpha}(\R^2))$, and that our solutions satisfy
    \begin{equation}\label{SQGeqseq}
    \partial_t \theta_n + u_n\cdot\nabla \theta_n=0.
    \end{equation} 
Multiplying (\ref{SQGeqseq}) by $\phi_R$ for some fixed $R>0$, we have
\begin{equation}\label{uniftimederbound}
\begin{split}
&\|\phi_R \partial_t \theta_n\|_{H^{s-1}} \leq \|\phi_R u_n \cdot \nabla \theta_n\|_{H^{s-1}}\leq C(R)\|u_n \cdot \nabla \theta_n\|_{H^{s-1}_{ul}}\\
&\qquad \leq C(R)\|u_n\|_{H^{s-1}_{ul}}\|\nabla \theta_n\|_{H^{s-1}_{ul}} \leq C(R),
\end{split}
\end{equation}
where the third inequality follows because $H^{s-1}_{ul}(\R^2)$ is a Banach algebra, and the last inequality follows since $\| \nabla \theta_n \|_{H^{s-1}_{ul}}$ and $\|u_n\|_{H^{s-1}_{ul}}$ are uniformly bounded in $n$ by a quantity depending only on the initial data.
    
Via Rellich's theorem, since for each $t\in[0,T]$, $\|\theta_n(t)\|_{H^s_{ul}}$ is uniformly bounded over $n$, we can conclude that for each $R$ and $t \in [0,T]$, there exists a subsequence of $(\phi_R \theta_n(t))$ which converges in $H^{s-1}(\R^2)$. A diagonalization argument shows that for each $t\in[0,T],$ there is a subsequence of $(\theta_n(t))$ (relabeled to $(\theta_n(t))$) such that for every $R>0$, the sequence $(\phi_R \theta_n(t))$ converges in $H^{s-1}(\R^2)$. 

It remains to find a subsequence which converges for all $t\in [0,T].$ From (\ref{uniftimederbound}), it follows that given $\epsilon>0$, there exists $\delta>0$ such that for all $n$ and for all $s,t\in[0,T]$ such that $|t-s|<\delta$,
 \begin{equation}\label{omegacont}   
 \|\phi_R \theta_n(t)-\phi_R \theta_n(s)\|_{H^{s-1}}<\epsilon/3.
 \end{equation}
    Consider a partition of $[0,T]$, $0=t_0<t_1<...<t_M=T$ such that $t_i-t_{i-1}<\delta$. Since there are finitely many elements in the partition, we can find a further subsequence of $(\phi_R \theta_n)$ (relabeled as $(\phi_R \theta_n)$) such that for each $t_i$ in our partition, $(\phi_R \theta_n(t_i))$ converges in $H^{s-1}(\R^2)$ for all $R>0$. Let $N$ be such that for all $m,n\geq N$ and for all $t_i$ in our partition, 
    $$\|\phi_R\theta_n(t_i)-\phi_R\theta_m(t_i)\|_{H^{s-1}}<\epsilon/3.$$
    It follows that for all pairs $m,n\geq N$ and for each $t\in[0,T]$, with $t_i$ chosen to satisfy $|t-t_i|<\delta$,
    
\begin{equation*}
\begin{split}
&\|  \phi_R  \theta_n(t) -  \phi_R  \theta_m(t) \|_{H^{s-1}} \leq \| \phi_R \theta_n(t) - \phi_R \theta_n(t_i) \|_{H^{s-1}}\\
&\qquad   + \| \phi_R \theta_n(t_i) - \phi_R \theta_m(t_i) \|_{H^{s-1}}  + \| \phi_R \theta_m(t_i) - \phi_R \theta_m(t) \|_{H^{s-1}} < \epsilon. 
\end{split}
\end{equation*}
Therefore, ($\phi_R \theta_n$) is a Cauchy sequence in $C([0,T];H^{s-1}(\R^2))$, and  thus converges in $C([0,T];H^{s-1}(\R^2))$.
We conclude that there exists $\theta$ such that $\phi_R \theta_n \to \phi_R \theta$ in $C([0, T]; H^{s - 1}(\R^2))$ for all $R > 0$.
 \\

\noindent {\bf $\bm{(\phi_Ru_n)}$ is Cauchy.}  The proof that for all $R>0$, $(\phi_Ru_n)$ is also Cauchy in $C([0,T];H^{s-2}(\R^2))$ is similar.  Indeed, for each $t\in[0,T]$, the uniform bound on $\|u_n(t)\|_{H^s_{ul}}$ over $n$ and a diagonalization argument, as above, allow us to conclude that there exists a subsequence of $(\phi_R u_n(t))$ which converges in $H^{s-2}(\R^2)$ for every $R>0$. It remains to find a single subsequence which converges for all $t\in [0,T]$.  We observe that by Theorem \ref{SQGHolder}, for $s,t\in[0,T]$, 
\begin{equation*}
u_n(t) - u_n(s) =   (a \Phi) * \grad^\perp(\theta_n(t) - \theta_n(s))- \int_s^t (\grad \grad^\perp ((1 - a) \Phi)) \stardot (\theta_n u_n),
\end{equation*}  
so that %
for each $R>0$,
\begin{equation*}
\begin{split}
&\| \phi_Ru_n(t) - \phi_Ru_n(s)\|_{H^{s-2}} \leq C(R)\| \phi_{8R}\theta_n(t) - \phi_{8R}\theta_n(s) \|_{H^{s-1}}+ C(R)\int_s^t  \|\theta_n\|_{L^{\infty}} \|u_n\|_{L^{\infty}} \\
&\qquad \leq C(R)|t-s| + C(R)|t-s|\sup_{\tau\in[s,t]} \|\theta_n(\tau)\|_{L^{\infty}} \|u_n(\tau)\|_{L^{\infty}},
\end{split}
\end{equation*}  
where we used the equality $\phi_R((a\Phi)\ast f)= \phi_R((a\Phi)\ast (\phi_{8R}f))$ to get the first inequality, and we used (\ref{uniftimederbound}) to get the second inequality.  Since $\|\theta_n\|_{L^{\infty}}$ and $\|u_n\|_{L^{\infty}}$ are uniformly bounded in $n$, given $\epsilon>0$, there exists $\delta>0$ such that for all $R>0$, whenever $s,t\in[0,T]$ satisfy $|s-t|<\delta$,
$$ \| \phi_Ru_n(t) - \phi_Ru_n(s)\|_{H^{s-2}} < \epsilon. $$ 
Following an argument identical to that used to show ($\phi_R \theta_n$) is a Cauchy sequence in $C([0,T];H^{s-1}(\R^2))$, we can conclude that ($\phi_R u_n$) is Cauchy in $C([0,T];H^{s-2}(\R^2))$, and there exists $u$ with $\phi_R u_n \to \phi_R u$ in $C([0, T]; H^{s - 2}(\R^2))$ for all $R > 0$.
  \\

\noindent{\bf ($\bm{u,\theta}$) satisfies Theorem \ref{mainQGEversion}.}  We now pass to the limit in the $H^{s-2}(\R^2)$ norm. Given $R>0$, if we multiply (\ref{SQGeqseq}) by $\phi_R$, then for $n,m\in \N,$

\begin{equation*}
\begin{split}
&\phi_R(\partial_t \theta_n - \partial_t \theta_m) = \phi_R(u_n - u_m)\cdot\nabla \theta_m + \phi_R u_n\cdot\nabla(\theta_n - \theta_m)\\
&= \phi_R(u_n - u_m)\cdot(\phi_{2R}\nabla \theta_m) + \phi_R u_n\cdot\phi_{2R}\nabla(\theta_n - \theta_m).
\end{split}
\end{equation*}  
    Hence, at each $t\in[0,T]$,  
    \begin{equation*}
    \begin{split}
        &\|\phi_R (\partial_t \theta_n- \partial_t \theta_m)\|_{H^{s-2}} \leq \|\phi_R(u_n - u_m)\cdot(\phi_{2R}\nabla \theta_m)\|_{H^{s-2}}\\
        &\qquad + \|\phi_R u_n\cdot\phi_{2R}\nabla(\theta_n - \theta_m)\|_{H^{s-2}}\\
        &\leq \|\phi_R(u_n - u_m)\|_{H^{s-2}}\|\phi_{2R}\nabla\theta_m\|_\infty + \|\phi_R(u_n - u_m)\|_\infty \|\phi_{2R}\nabla\theta_m\|_{H^{s-2}}\\
        & + \|\phi_R u_n\|_{H^{s-2}}\|\phi_{2R}\nabla(\theta_n - \theta_m)\|_\infty +  \|\phi_R u_n\|_\infty \|\phi_{2R}\nabla(\theta_n - \theta_m)\|_{H^{s-2}}.
    \end{split}
    \end{equation*}
    Since $\|\phi_{2R}\nabla\theta_m\|_\infty$, $\|\phi_{2R}\nabla \theta_m\|_{H^{s-2}}$, $\|\phi_R u_n\|_{H^{s-2}}$, and  $\|\phi_R u_n\|_\infty$ are uniformly bounded in $n$ on $[0,T]$, as $N \to \iny$, we have
\begin{equation*} 
\begin{split}
& \sup_{m, n \ge N} \|\phi_R(u_n - u_m)\|_{H^{s-2}}\|\phi_{2R}\nabla\theta_m\|_\infty \rightarrow 0,\\
& \sup_{m, n \ge N} \|\phi_R(u_n - u_m)\|_\infty \|\phi_{2R}\nabla\theta_m\|_{H^{s-2}} \rightarrow 0,\\
& \sup_{m, n \ge N} \|\phi_R u_n\|_{H^{s-2}}\|\phi_{2R}\nabla(\theta_n - \theta_m)\|_\infty \rightarrow 0, \\
&  \sup_{m, n \ge N} \|\phi_R u_n\|_\infty \|\phi_{2R}\nabla(\theta_n - \theta_m)\|_{H^{s-2}} \rightarrow 0.
\end{split}
\end{equation*}
From these estimates, it follows that $(\phi_R \partial_t \theta_n)$ is Cauchy in $C([0,T];H^{s-2}(\R^2)).$

Since $\phi_R\theta_n \to \phi_R \theta$ in $C([0,T]\times \R^2)$, we also have $\phi_R \theta_n \to \phi_R \theta$ in $\mathcal{D}'([0,T]\times \R^2)$. This implies that $\phi_R\partial_t\theta_n\to\phi_R\partial_t\theta$ in $\mathcal{D}'([0,T]\times \R^2)$, so by the uniqueness of limits, for all $R>0,$ $\phi_R\partial_t\theta_n\to\phi_R\partial_t\theta$ in $C([0,T]; H^{s-2}(\R^2))$. 

We multiply $(\ref{SQG1})_1$, as satisfied by ($u_n,\theta_n$), by $\phi_R$, and we pass to the limit in $C([0,T]; H^{s-2}(\R^2))$.  This gives $\phi_R\partial_t \theta=-\phi_R u\cdot \nabla \theta$ for all $R>0$.  

To see that $\theta$ belongs to $L^{\infty}(0,T; H^s_{ul}(\R^2))$, we use (\ref{vortunifhsulbound}) and a weak-* compactness argument.  Note that by (\ref{vortunifhsulbound}), for every $x\in\R^2$, $n\in\N$, and $t\in[0,T]$, $$\| \phi_x \theta_n(t) \|_{H^s} \leq C.$$  Therefore, up to a subsequence which depends on $t$ and $x$, $\phi_x \theta_n(t)$ converges weak-* in $H^s(\R^2)$.  Note, however, that for every $R>0$ and $t\in[0,T]$, $\phi_R\theta_n(t)\rightarrow \phi_R\theta(t)$ in $H^{s-1}(\R^2)$.  Given $x$, since we can always choose $R$ large enough to ensure that $\phi_x = \phi_x\phi_R$, we have $\phi_x\theta_n(t)\rightarrow \phi_x\theta(t)$ in $L^2(\R^2)$.  By uniqueness of limits, $\phi_x \theta_n(t)$ converges weak-* in $H^s(\R^2)$ to $\phi_x \theta(t)$, and 
$$ \| \phi_x \theta(t) \|_{H^s} \leq C.$$
This holds for all $t\in [0,T]$ and for all $x\in\R^2$, so $\theta$ belongs to $L^{\infty}(0,T; H^s_{ul}(\R^2))$.  The argument showing that $u$ belongs to $L^{\infty}(0,T; H^s_{ul}(\R^2))$ is similar.\\
\\
{\bf ($\bm{u,\theta}$) satisfies (\ref{hsulversion}).}  Note that (\ref{hsulversion}) follows from Theorem \ref{SQGHolder} since, by the Sobolev Embedding Theorem, $u$ and $\theta$ belong to $C([0,T];C^{\alpha}(\R^2))$ for some $\alpha>1$. 
This completes the proof of Theorem \ref{SQGUnifLocalSobolev}.  %
\\

\noindent \textbf{Uniqueness} Applying a cutoff function $\phi_R$ to two solutions and making the same argument that showed $(\phi_R u_n)$ is Cauchy gives uniqueness of solutions.
\end{proof}
 
%%%%new subsection for Euler
\section{($E$) in uniformly local Sobolev spaces}\label{S:Euler3D}
\subsection{A priori estimates}
We now prove an analogous theorem to Theorem \ref{mainQGEversion} for the Euler equations.
\begin{theorem}\label{mainLpversion}
Assume $s$ is an integer satisfying $s \geq 3$, with $d=2$ or $3$.  Let $u$ be a solution to ($E$) in $C^1([0,T]; H^k(\R^d))$ for all $k\in\N$.  Then there exists $C>0$, depending on $s$, such that the following estimate holds for each $t\in[0,T]$:
\begin{equation}\label{umainest}
 \| \omega (t)\|^2_{H^{s-1}_{ul}} \leq (1+ \| \omega^0\|^2_{H^{s-1}_{ul}})\exp\left( C\int_0^t \| u (\tau)\|_{\tilde{C}^1}(\| u(\tau) \|^2_{L^{\infty}}  + 1) \, d\tau \right).
\end{equation}
\end{theorem}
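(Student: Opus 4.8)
The strategy parallels that of Theorem \ref{mainQGEversion}, now working from the vorticity equation $\prt_t \omega + u \cdot \grad \omega = \omega \cdot \grad u$ (the stretching term on the right being absent when $d = 2$). For each multi-index $\alpha$ with $0 \le \abs{\alpha} \le s - 1$ set $W = D^\alpha \omega$ and apply $D^\alpha$ to get $\prt_t W + u \cdot \grad W = F$, where $F = \big(u \cdot \grad W - D^\alpha(u \cdot \grad \omega)\big) + D^\alpha(\omega \cdot \grad u)$. Multiplying by the bump function $\phi_x$, then by $\phi_x W$ and integrating, the transport term $\int \phi_x W\,(u \cdot \grad(\phi_x W))$ vanishes by $\dv u = 0$, leaving
\[
	\tfrac12 \tfrac{d}{dt}\, \norm{\phi_x W}_{L^2}^2
		= \int_{\R^d} (u \cdot \grad \phi_x)\,\phi_x \abs{W}^2
			+ \int_{\R^d} \phi_x^2\, W \cdot F.
\]
The first integral is bounded by $C \norm{u}_{L^\iny} \norm{\omega}_{H^{s-1}_{ul}}^2$ exactly as in the proof of Theorem \ref{mainQGEversion}.

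For $\int \phi_x^2 W \cdot F$ I would estimate $\norm{\phi_x F}_{L^2}$ in two pieces. The stretching contribution $D^\alpha(\omega \cdot \grad u) = D^\alpha(\omega_j \prt_j u)$ is controlled by Lemma \ref{commutator}(i) with $f = \omega_j$, $g = \prt_j u$; since $\norm{\omega}_{L^\iny} \le C\norm{\grad u}_{L^\iny}$ this gives a bound by $C\norm{u}_{\tilde{C}^1}\big(\norm{\grad u}_{H^{s-1}_{ul}} + \norm{\omega}_{H^{s-1}_{ul}}\big)$. The commutator contribution $u \cdot \grad W - D^\alpha(u \cdot \grad \omega) = -[D^\alpha, u \cdot \grad]\omega$ is the delicate term: the naive bound via Lemma \ref{commutator}(iii) with $f = u$, $g = \grad\omega$ produces a factor $\norm{\grad\omega}_{L^\iny}$, which --- unlike the corresponding $\norm{\grad\theta}_{L^\iny}$ in Theorem \ref{mainQGEversion} --- is not available at the critical index $s = 3$ when $d = 3$. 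One must instead exploit $\dv u = 0$ to write $u \cdot \grad \omega_k = \dv(u\,\omega_k)$ together with the divergence-free structure of $\omega$ itself ($\dv \omega = 0$, whence $\dv D^\alpha\omega = 0$), so that after integrating by parts inside the pairing $\int \phi_x^2\, W \cdot [D^\alpha, u\cdot\grad]\omega$ the top-order part is carried by $\norm{D^\alpha\omega}_{L^2_{ul}}$ together with the \emph{product} commutator $D^\alpha(u\,\omega_k) - u\,D^\alpha\omega_k$, to which Lemma \ref{commutator}(iii) applies in the favourable form $C\big(\norm{u}_{\tilde{C}^1}\norm{\omega}_{H^{s-2}_{ul}} + \norm{\omega}_{L^\iny}\norm{u}_{H^{s-1}_{ul}}\big)$ --- with $\norm{\omega}_{L^\iny}$, not $\norm{\grad\omega}_{L^\iny}$.

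The remaining factors carrying more than one derivative --- $\norm{\grad u}_{H^{s-1}_{ul}}$, $\norm{u}_{H^{s-1}_{ul}}$, $\norm{u}_{H^{s}_{ul}}$ --- are then reabsorbed using the one-derivative gain of the Biot--Savart law: via the Serfati-type identity for $3$D Euler, the analogue of (\ref{BSReplacementQGEGradu})--(\ref{velocityHsulBound}), writing $D^\gamma u$ for $\abs{\gamma} \ge 1$ as $(aG)\ast\curl D^\gamma\omega$ plus a term $[D^\gamma(\cdots(1-a)G)]\ast\omega$ (the constant-of-integration ambiguity eliminated since $u \in C([0,T];H^k)$ for all $k$), and applying Lemma \ref{L:ConvHsBound} and Lemma \ref{SobolevEmbedding}, one obtains $\norm{u}_{H^s_{ul}} \le C\big(\norm{\omega}_{H^{s-1}_{ul}} + \norm{u}_{L^\iny}\big)$, hence $\norm{\grad u}_{H^{s-1}_{ul}} \le C\big(\norm{\omega}_{H^{s-1}_{ul}} + \norm{u}_{L^\iny}\big)$; the sub-leading factors are absorbed into $\norm{\omega}_{H^{s-1}_{ul}}$ by Sobolev embedding. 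Substituting into $\int \phi_x^2\, W\cdot F \le \norm{\phi_x W}_{L^2}\norm{\phi_x F}_{L^2}$, summing over $\abs{\alpha}\le s-1$, taking the supremum over $x$ (Proposition \ref{equivnorms}), and applying Young's inequality to the cross term $\norm{u}_{\tilde{C}^1}\norm{u}_{L^\iny}\norm{\omega}_{H^{s-1}_{ul}} \le \tfrac12\norm{u}_{\tilde{C}^1}\big(\norm{u}_{L^\iny}^2 + \norm{\omega}_{H^{s-1}_{ul}}^2\big)$, yields
\[
	\tfrac{d}{dt}\,\big(1 + \norm{\omega(t)}_{H^{s-1}_{ul}}^2\big)
		\le C\,\norm{u(t)}_{\tilde{C}^1}\big(\norm{u(t)}_{L^\iny}^2 + 1\big)\,\big(1 + \norm{\omega(t)}_{H^{s-1}_{ul}}^2\big),
\]
and \Gronwalls lemma gives (\ref{umainest}). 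The main obstacle is the commutator term of the second paragraph: guaranteeing that every factor with two or more derivatives is paid for by $\norm{\omega}_{H^{s-1}_{ul}}$ (through the elliptic gain) and never by $\norm{\grad\omega}_{L^\iny}$, which forces one to use the divergence-free structure of both $u$ and $\omega$ rather than a black-box commutator estimate. The passage from the $\phi_x$-localized bounds to the $H^{s-1}_{ul}$ norm is routine, and the $d = 2$ case is simpler still, the stretching term being absent.
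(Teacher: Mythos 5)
Your overall architecture is the paper's: localize with $\phi_x$, run the energy estimate on $W=D^\alpha\omega$, treat the stretching term with Lemma \ref{commutator}(i), avoid $\norm{\grad\omega}_{L^\iny}$ in the transport commutator by exploiting $\dv u = 0$ so that only $\norm{\omega}_{L^\iny}$ appears, and close with the cut-off Biot--Savart gain $\norm{u}_{H^s_{ul}} \le C(\norm{\omega}_{H^{s-1}_{ul}} + \norm{u}_{L^\iny})$ followed by Gr\"onwall. However, the specific mechanism you describe for the commutator term does not close as written. Writing $F^i = \prt_j C^i_j$ with $C^i_j = u^j D^\alpha\omega^i - D^\alpha(u^j\omega^i)$ and integrating by parts in $\int \phi_x^2 W^i \prt_j C^i_j$ produces, besides harmless terms, $\int \phi_x^2\, \prt_j D^\alpha\omega^i \, C^i_j$, which carries $s$ derivatives of $\omega$ in $L^2_{loc}$ and is not controlled by $\norm{\omega}_{H^{s-1}_{ul}}$. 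Your appeal to $\dv\omega = 0$ does not rescue it: the contracted index $j$ is the one attached to $u$, not the divergence of $W$. Only the $u^j D^\alpha\omega^i$ half of $C^i_j$ can be saved there (via $\tfrac12 u\cdot\grad\abs{W}^2$ and another integration by parts), while the $D^\alpha(u^j\omega^i)$ half, if integrated by parts again, simply regenerates the original term. So the order-$(s-1)$ commutator bound $C(\norm{u}_{\tilde{C}^1}\norm{\omega}_{H^{s-2}_{ul}} + \norm{\omega}_{L^\iny}\norm{u}_{H^{s-1}_{ul}})$ that you quote is not what this argument yields.

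The repair is the paper's actual move, which is simpler: do not integrate by parts at all. Using $\dv u = 0$ twice, $F^i = u^j D^{\beta}\omega^i - D^{\beta}(u^j\omega^i)$ with $D^{\beta} = \prt_j D^\alpha$ of order $\abs{\beta} \le s$ (summed over $j$), i.e.\ $F$ is itself a product commutator at order $\le s$ applied to $u\,\omega$. Lemma \ref{commutator}(iii), which is stated for $\abs{\alpha}\le s$ with $f = u$, $g = \omega$, then gives directly $\norm{\phi_x F}_{L^2} \le C(\norm{u}_{\tilde{C}^1}\norm{\omega}_{H^{s-1}_{ul}} + \norm{\omega}_{L^\iny}\norm{u}_{H^{s}_{ul}})$ --- only $\norm{\omega}_{L^\iny}$ appears, and the $\norm{u}_{H^s_{ul}}$ factor is absorbed exactly as in your last paragraph via (\ref{BSReplacementEuler})--(\ref{uomegasobolevbound}) and Lemma \ref{L:ConvHsBound}. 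Two minor points: since the solution in Theorem \ref{mainLpversion} is assumed smooth with $u\in C^1([0,T];H^k)$ for all $k$, the plain Biot--Savart law with a cutoff suffices (the far-field term is integrated by parts onto $u$, which is why $\norm{u}_{L^\iny}$, not $\omega$, appears there), so no constant-of-integration ambiguity needs to be addressed; and the remainder of your plan (localized energy identity, stretching term, summation over $\alpha$, supremum in $x$, Young's inequality, Gr\"onwall) coincides with the paper's proof.
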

\begin{proof}
The proof is similar to that of Theorem \ref{mainQGEversion}.  We prove the theorem for $d=3$.  The proof clearly extends to the case $d=2$.

Set $W=D^{\alpha}\omega$ with $0\leq |\alpha| \leq s-1$ and $s\geq 3$.  Apply $D^{\alpha}$ to the vorticity equation to get 
\begin{equation}\label{vort1}
\partial_t W + u\cdot\nabla W = D^{\alpha}(\omega\cdot \nabla u)  +F,
\end{equation}
where 
\begin{equation*}
\begin{split}
& F = u\cdot\nabla W - D^{\alpha}(u\cdot\nabla \omega).\\
\end{split}
\end{equation*}  
For $x\in\R^3$ fixed, multiply (\ref{vort1}) by $\phi_{x}$ to get 
\begin{equation}\label{vort2}
\begin{split}
&\partial_t (\phi_{x}W) + u\cdot\nabla (\phi_{x}W)  = (u\cdot\nabla \phi_{x})W+ \phi_{x}D^{\alpha}(\omega\cdot \nabla u) + \phi_{x}F .
\end{split}
\end{equation} 
After taking the dot product of (\ref{vort2}) with $\phi_{x}W$ and integrating, we conclude that
\begin{equation*}
\begin{split}
&\int_{\R^3} \phi_{x}W \cdot \partial_t (\phi_xW)  + \int_{\R^3} \phi_{x}W \cdot (u\cdot\nabla (\phi_{x}W))   = \int_{\R^3} \phi_{x}W \cdot ((u\cdot\nabla\phi_{x})W)\\
&\qquad+ \int_{\R^3} \phi_{x}W \cdot (\phi_{x}D^{\alpha}(\omega\cdot \nabla u)) + \int_{\R^3} \phi_{x}W \cdot \phi_{x} F .
\end{split}
\end{equation*}
Now observe that 
\begin{equation*}
\int_{\R^3} \phi_{x}W \cdot \partial_t (\phi_x W) = \frac{1}{2} \frac{d}{dt} \| \phi_{x} W \|_{L^2}^2.
\end{equation*}
Moreover, one can show using the divergence-free property of $u$ and integration by parts that  
\begin{equation*}
\int_{\R^3} \phi_{x}W \cdot (u\cdot\nabla (\phi_{x}W)) = 0.
\end{equation*}
By properties of $\phi_x$ and \Holders inequality, we also have
\begin{equation*}
\begin{split}
&\int_{\R^3} \phi_{x}W \cdot ((u\cdot\nabla\phi_{x})W) = \int_{\R^3}\phi_{x}W \cdot ((u\cdot\nabla\phi_{x})\phi_{x,2}W) \\
&\qquad \leq \| \phi_{x,2}W \|_{L^2}^2 \| u\cdot\nabla\phi_{x} \|_{L^{\infty}} \leq C \| u \|_{L^{\infty}}\| \omega \|^2_{H^{s-1}_{ul}}, 
\end{split}
\end{equation*}
and, again from \Holders inequality,
\begin{equation*}
\begin{split}
&\int_{\R^3} \phi_{x}W \cdot (\phi_{x}D^{\alpha}(\omega\cdot \nabla u))  \leq C\| \phi_{x}W \|_{L^2}  \| \phi_{x}D^{\alpha}(\omega\cdot \nabla u)\|_{L^2}\\
&  \leq \| \omega \|_{H^{s-1}_{ul}} (\|\omega \|_{H^{s-1}_{ul}}\| \nabla  u \|_{L^{\infty}} + \|\omega \|_{L^{\infty}}\| \nabla  u \|_{H^{s-1}_{ul}}) \leq C \| u \|^{2}_{H^{s}_{ul}}\| \nabla  u \|_{L^{\infty}},
\end{split}
\end{equation*}
where we used Lemma \ref{commutator} to get the second inequality.  Finally, another application of \Holders inequality gives 
\begin{equation*}
\int_{\R^3} \phi_{x}W \cdot \phi_{x} F \leq \|\phi_{x} W \|_{L^2}  \| \phi_{x} F \|_{L^2}.
\end{equation*}
Since $u$ is divergence free, we can write 
\begin{equation*}
F = u\cdot\nabla W - D^{\alpha}\dv(u \omega),
\end{equation*} 
which allows us to apply Lemma \ref{commutator} to $\| \phi_{x} F \|_{L^2}$ with $f=u$ and $g=\omega$.  This gives
\begin{equation*}
\| \phi_{x} F \|_{L^2} \leq C\left( \| u \|_{\tilde{C}^1} \| \omega \|_{H^{s-1}_{ul}} + \| \omega \|_{L^{\infty}}\| u\|_{H^{s}_{ul}} \right) \leq C\| u \|_{\tilde{C}^1}\| u\|_{H^{s}_{ul}}. 
\end{equation*}
Combining the above estimates gives
\begin{equation*}
\frac{1}{2} \frac{d}{dt} \| \phi_{x} W \|_{L^2}^2 \leq C\| u \|_{\tilde{C}^1}\left( \| \omega \|_{H^{s-1}_{ul}}\| u\|_{H^{s}_{ul}} + \| u\|^2_{H^{s}_{ul}} \right) \leq C\| u \|_{\tilde{C}^1} \| u\|^2_{H^{s}_{ul}}.
\end{equation*}
After integrating in time and taking the supremum over $x\in\R^3$ of both sides, we conclude that 
\begin{equation}\label{velvortbound}
\begin{split}
&\| \omega(t) \|^2_{H^{s-1}_{ul}} \leq \| \omega^0 \|^2_{H^{s-1}_{ul}} + C\int_0^t \| u (\tau)\|_{\tilde{C}^1} \| u(\tau)\|^2_{H^{s}_{ul}} \, d\tau. 
\end{split}
\end{equation}
It remains to close the estimate and apply \Gronwalls lemma.  To do this, we use the Biot-Savart law. %

Let $K_3 = \grad G$, with $G$ as in (\ref{e:G}), be (one form of) the Biot-Savart kernel in dimension $3$.  Setting $\omega^i_k= (\grad u - (\grad u)^T)^i_k = \partial_k u^i - \partial_i u^k$, since $u$ and $\omega$ are smooth and decaying, for $1\leq i \leq 3$, using implicit sum notation,
\begin{equation}\label{BSReplacementEuler}
\begin{split}
&u^i = K_3^k \ast \omega^i_k =  (aK_3^k) \ast \omega^i_k + ((1-a)K_3^k) \ast \omega^i_k \\
&\qquad = (aK_3^k) \ast \omega^i_k + (\partial_k((1-a)K_3^k)) \ast u^i - (\partial_i((1-a)K_3^k)) \ast u^k.
\end{split}
\end{equation}
Applying a differential operator $D^{\beta}$, with $0\leq |\beta| \leq s-1$, to both sides of (\ref{BSReplacementEuler}) gives
\begin{equation*} %
	D^{\beta} u^i
		=  (aK_3^k) \ast D^{\beta}\omega^i_k
			+ [D^{\beta}\partial_k((1-a)K_3^k)] \ast u^i
			- [D^{\beta}\partial_i((1-a)K_3^k)] \ast u^k.
\end{equation*} 
Setting $D^{\gamma} = \partial_jD^{\beta}$ and applying $\partial_j$
then gives
\begin{equation}\label{BSReplacementEulerGradu}
\begin{split}
D^{\gamma} u^i
	&= \partial_j((aK_3^k) \ast D^{\beta}\omega^i_k)
		+ [D^{\gamma} \prt_i ((1-a)K_3^k)] \ast u^i.
\end{split}
\end{equation} 
Applying Lemma \ref{L:ConvHsBound} gives
\begin{equation*}
\| \nabla u \|_{H^{s-1}_{ul}} \leq C(\| \omega \|_{H^{s-1}_{ul}} + \| u \|_{L^{\infty}}).
\end{equation*}
This estimate, combined with $\| u \|_{L^2_{ul}} \leq C\| u \|_{L^{\infty}}$, gives
\begin{equation}\label{uomegasobolevbound}
\| u \|_{H^{s}_{ul}}  \leq C(\| \omega \|_{H^{s-1}_{ul}} + \| u \|_{L^{\infty}} ).
\end{equation}
We use (\ref{uomegasobolevbound}) and (\ref{velvortbound}) to write
\begin{equation*}
\begin{split}
&\| \omega (t)\|^2_{H^{s-1}_{ul}} \leq  \| \omega^0 \|^2_{H^{s-1}_{ul}} + C\int_0^t \| u (\tau)\|_{\tilde{C}^1}(\| \omega(\tau) \|_{H^{s-1}_{ul}} + \| u(\tau) \|_{L^{\infty}} )^2  \, d\tau \\
&\qquad \leq  \| \omega^0 \|^2_{H^{s-1}_{ul}} + C\int_0^t \| u (\tau)\|_{\tilde{C}^1}(\| \omega(\tau) \|^2_{H^{s-1}_{ul}} + \| u(\tau) \|^2_{L^{\infty}} )  \, d\tau,\\
&\qquad \leq  \| \omega^0 \|^2_{H^{s-1}_{ul}} + C\int_0^t \| u (\tau)\|_{\tilde{C}^1}(\| u(\tau) \|^2_{L^{\infty}}  + 1)(\| \omega(\tau) \|^2_{H^{s-1}_{ul}} + 1 )  \, d\tau,
\end{split}
\end{equation*}
where we used that for $A,B\geq 0$, $(A+B)^2 \leq C(A^2 + B^2)$ to get the second inequality.  Setting $h(t) = 1 + \| \omega (t)\|^2_{H^{s-1}_{ul}}$, we have 
\begin{equation*}
h(t)  \leq h(0) + C\int_0^t \| u (\tau)\|_{\tilde{C}^1}(\| u(\tau) \|^2_{L^{\infty}}  + 1)h(\tau)  \, d\tau.
\end{equation*}  
An application of \Gronwalls Lemma gives
\begin{equation*}
 \| \omega (t)\|^2_{H^{s-1}_{ul}} \leq (1+ \| \omega^0\|^2_{H^{s-1}_{ul}})\exp\left( C\int_0^t \| u (\tau)\|_{\tilde{C}^1}(\| u(\tau) \|^2_{L^{\infty}}  + 1) \, d\tau \right).
\end{equation*}
This completes the proof of Theorem \ref{mainLpversion}.
\end{proof}

\subsection{Existence of solutions} 

We prove the following theorem.

\begin{theorem}\label{EulerUnifLocalSobolev}
Let $s\geq 3$.  Let $u^0$ be a function in $H^s_{ul}(\R^3)$, and let $\omega^0=\nabla\times u^0$.  There exists $T>0$ and a unique classical solution $(u, p)$ to (E) satisfying 
\begin{equation*}
\begin{split}
&u \in L^{\infty}(0,T; H^s_{ul}(\R^3)) \cap Lip([0,T] ; H^{s-1}_{ul}(\R^3)).
\end{split}
\end{equation*}    

Moreover, $p$ satisfies  % 
    \begin{align}\label{e:gradpExp}
    \begin{split}
        \grad p(x)
			&= - \int_{\R^3} a(x - y) \grad G (x - y)
				        \dv \dv (u \otimes u)(y) \, dy \\
			    &\qquad
	 			    + \int_{\R^3} (u \otimes u)(y) \cdot
					    \grad \grad \brac{(1 - a(x - y)) \grad G(x - y)}
					    \, dy.
	    \end{split}
    \end{align} 
\end{theorem}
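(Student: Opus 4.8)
The plan is to mirror the proof of Theorem \ref{SQGUnifLocalSobolev}: build an approximating sequence of smooth, spatially decaying Euler flows, obtain bounds uniform in the approximation parameter on a common time interval, and pass to the limit with compactness localized by the cutoffs $\phi_R$. For the approximation I would take $u^0_n = \mathbb{P}(\zeta_n S_n u^0)$, where $S_n$ is the Littlewood--Paley cutoff, $\zeta_n(x) = \zeta(x/n)$ for a fixed $\zeta \in C^\infty_c(\R^3)$ with $\zeta \equiv 1$ on $B_1(0)$, and $\mathbb{P}$ is the Leray projection onto divergence-free fields. Each $u^0_n$ is then divergence-free and lies in $H^k(\R^3)$ for every $k$, so the classical local well-posedness theory for the 3D Euler equations produces a unique solution $u_n \in C^1([0, T_n); H^k(\R^3))$ for all $k$, with pressure $p_n$, to which Theorem \ref{mainLpversion} applies. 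Using Lemma \ref{smoothcutoffbounded} for $S_n$, the $H^s_{ul}$-boundedness of $\mathbb{P}$ (via Lemma \ref{L:CZLike} and localization, as in Lemma \ref{L:ConvHsBound}), and $\zeta_n \equiv 1$ on $B_1(0)$, one gets $\| u^0_n \|_{H^s_{ul}} \le C \| u^0 \|_{H^s_{ul}}$ uniformly in $n$ and $\phi_R u^0_n \to \phi_R u^0$ in $H^s(\R^3)$ for each $R$, the analogue of (\ref{initialdatabounds}).

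The central step is to fix a single $T > 0$, depending only on $\| u^0 \|_{H^s_{ul}}$, on which all the $u_n$ exist with $\| u_n \|_{C([0,T]; H^s_{ul})} \le C$ uniformly, and this is where the pressure identity (\ref{e:gradpExp}) does its work. For each smooth decaying $u_n$ the pressure satisfies (\ref{e:gradpExp}) (the identity established in the appendix, obtained from $-\Delta p_n = \dv \dv(u_n \otimes u_n)$ by writing $\nabla G = a\nabla G + (1-a)\nabla G$ and integrating the far-field piece by parts twice, moving derivatives as in Remark \ref{R:Convolutions}). In (\ref{e:gradpExp}) the near-field term is, up to the usual singular-integral correction, a Calder\'on--Zygmund-type operator applied to $u_n \otimes u_n$, so by Lemma \ref{L:CZLike} and the \Holder regularity available for $s \ge 3$ (here $H^s_{ul}(\R^3) \hookrightarrow \tilde C^{1+\epsilon}(\R^3)$ for some $\epsilon > 0$, by Lemma \ref{SobolevEmbedding}) it is bounded in $L^\infty$ by $C\| u_n \|_{\tilde C^{1+\epsilon}}^2$; the far-field kernel $\nabla\nabla[(1-a)\nabla G]$ is $O(|x|^{-4})$ at infinity and hence lies in $L^1(\R^3)$, so that term is bounded by $C \| u_n \|_{L^\infty}^2$. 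Therefore $\| \nabla p_n(t) \|_{L^\infty} \le C \| u_n(t) \|_{H^s_{ul}}^2$, and integrating $\partial_t u_n + u_n \cdot \nabla u_n = -\nabla p_n$ along the flow of $u_n$ gives
\begin{equation*}
\| u_n(t) \|_{L^\infty} \le \| u^0_n \|_{L^\infty} + C \int_0^t \| u_n(\tau) \|_{H^s_{ul}}^2 \, d\tau.
\end{equation*}
Combining this with (\ref{uomegasobolevbound}), (\ref{velvortbound}), and (\ref{umainest}), the quantity $y_n(t) := \| \omega_n(t) \|_{H^{s-1}_{ul}} + \| u_n(t) \|_{L^\infty}$ obeys a Riccati-type integral inequality with coefficients depending only on $\| u^0 \|_{H^s_{ul}}$; this furnishes $T > 0$ and $C$ with $y_n \le C$ on $[0, \min\{T, T_n\}]$. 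Since the bound on $y_n$ controls $\| \omega_n \|_{L^\infty}$, the Beale--Kato--Majda continuation criterion forces $T_n > T$, so all $u_n$ exist on $[0, T]$ with $\| u_n \|_{C([0,T]; H^s_{ul})} + \| u_n \|_{C([0,T]; \tilde C^1)} + \| \omega_n \|_{C([0,T]; H^{s-1}_{ul})} \le C$, uniformly in $n$ --- the analogues of (\ref{vortunifhsulbound}) and (\ref{velunifhsulbound}).

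With uniform bounds in hand, the passage to the limit runs as in Theorem \ref{SQGUnifLocalSobolev}. From the vorticity equation, $\| \phi_R \partial_t \omega_n \|_{H^{s-2}} \le C(R)$ uniformly in $n$; Rellich's theorem and the diagonalization/equicontinuity argument used there yield a subsequence and limits $u$, $\omega = \curl u$ with $\phi_R u_n \to \phi_R u$ in $C([0,T]; H^{s-1}(\R^3))$ and $\phi_R \omega_n \to \phi_R \omega$ in $C([0,T]; H^{s-2}(\R^3))$ for every $R$. Since $s - 1 \ge 2$, the transport and stretching terms $u_n \cdot \nabla \omega_n$ and $\omega_n \cdot \nabla u_n$ pass to the limit locally (the relevant products being continuous in the $H^{s-2}_{loc}$ topology), so $\omega$ solves $(E_\omega)_1$; a weak-$*$ compactness argument upgrades this to $u \in L^\infty(0,T; H^s_{ul}(\R^3))$, and the equation then gives $u \in Lip([0,T]; H^{s-1}_{ul}(\R^3))$. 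Passing to the limit in (\ref{e:gradpExp}) for the $u_n$ --- each term there is a convolution of $u_n \otimes u_n$ against a fixed kernel, compactly supported in the near-field case and $L^1$ in the far-field case, hence continuous under the convergence established --- recovers $p$ and (\ref{e:gradpExp}) for $u$, and since $H^s_{ul}(\R^3) \hookrightarrow \tilde C^1(\R^3)$ for $s \ge 3$ the resulting $(u,p)$ is a classical solution of $(E)$. Uniqueness follows by applying $\phi_R$ to two solutions and running the difference estimate of the previous steps, using (\ref{e:gradpExp}) to control the difference of velocities in $L^\infty$ and Lemma \ref{commutator} for the stretching term, exactly as in the Cauchy argument of Theorem \ref{SQGUnifLocalSobolev}.

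The step I expect to be the main obstacle is the uniform-in-$n$ control of both the time of existence and the $H^s_{ul}$ norm. The difficulty is that the a priori estimate (\ref{umainest}) and the velocity bound (\ref{uomegasobolevbound}) both require control of $\| u_n \|_{L^\infty}$, which cannot be taken from the (non-uniform in $n$) $H^k(\R^3)$ norms of the approximants; it must be extracted from the Serfati-type near-field/far-field structure of the pressure identity (\ref{e:gradpExp}), which is precisely what makes $\| \nabla p_n \|_{L^\infty}$ controllable for non-decaying velocity. Closing the resulting coupled Riccati-type system for $\| \omega_n \|_{H^{s-1}_{ul}}$ and $\| u_n \|_{L^\infty}$ and reconciling it with the Beale--Kato--Majda continuation criterion to obtain a genuinely $n$-independent $T$ is the heart of the argument.
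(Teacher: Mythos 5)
Your overall architecture (smooth decaying approximants, uniform bounds on a common time interval via the $H^s_{ul}$ a priori estimate, $\phi_R$-localized compactness, passage to the limit including in the pressure identity, uniqueness by the same localized estimates) is the paper's. The genuine gap is the preparation of the initial data. You take $u^0_n = \mathbb{P}(\zeta_n S_n u^0)$ and claim $\| u^0_n \|_{H^s_{ul}} \le C \| u^0 \|_{H^s_{ul}}$ and $\phi_R u^0_n \to \phi_R u^0$ in $H^s$. The convergence claim fails in general: writing $\mathbb{P}(\zeta_n S_n u^0) = \zeta_n S_n u^0 - \nabla \Delta^{-1}(\nabla \zeta_n \cdot S_n u^0)$, the correction is produced by a source of size $O(n^{-1})$ spread over an annulus of radius $\sim n$, so on a fixed ball it is $O(1)$ and does not vanish as $n \to \infty$. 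Concretely, take $u^0 \equiv e_1 \in H^s_{ul}(\R^3)$ and $\zeta$ radial: then $S_n u^0 = e_1$, and for radial $\zeta_n$ one computes $\nabla \Delta^{-1} \partial_1 \zeta_n(0) = \tfrac{1}{3}\zeta_n(0) e_1 = \tfrac13 e_1$ for every $n$, so $u^0_n \to \tfrac23 e_1$ near the origin rather than $e_1$; the limit solution would not assume the prescribed datum. The uniform $H^s_{ul}$ bound is also not delivered by the lemmas you cite: Lemmas \ref{L:CZLike} and \ref{L:ConvHsBound} treat the compactly supported kernels $a_\lambda \Psi$, whereas the Leray correction kernel decays only like $|x-y|^{-3}$, so the localization $\phi_x(K*f) = \phi_x(K * \phi_{x,8} f)$ used there is unavailable. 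This is precisely why the paper avoids the Leray projection and instead cuts off the stream function: $u^0_n = S_{m_n}(\nabla \times (\phi_n \psi))$ with $\psi$ from Lemma \ref{L:Stream} is automatically divergence-free, equals $u^0$ on $B_n$ up to mollification, and the annular error $\nabla \phi_n \times \psi$ is uniformly controlled because the linear growth $\norm{\psi}_{H^s(U)} \le C R \norm{u^0}_{H^s_{ul}}$ is compensated by $\abs{\nabla \phi_n} \sim n^{-1}$ (Lemma \ref{unifbound}). Without this device (or an equivalent one) your approximating sequence is inadmissible and the rest of the argument does not start.

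A secondary difference, not a gap: you obtain the uniform time of existence from a Riccati-type inequality for $\norm{\omega_n}_{H^{s-1}_{ul}} + \norm{u_n}_{L^\infty}$, using the $L^\infty$ pressure bound of Proposition \ref{P:PressureIdentity} together with (\ref{velvortbound}), (\ref{uomegasobolevbound}), (\ref{umainest}) and a Beale--Kato--Majda continuation. That can be made to work since the approximants are smooth and decaying, but the paper gets the uniform $T$ more directly: $H^s_{ul} \hookrightarrow C^\alpha$ with $\alpha > 1$ and $\norm{u^0_n}_{C^\alpha} \le C \norm{u^0}_{H^s_{ul}}$, so Chemin's H\"older-space theory gives the explicit bound (\ref{EHolderuniformbound}) and hence a common $T$ and a uniform $C^\alpha$ (in particular $\tilde C^1$ and $L^\infty$) bound, after which Theorem \ref{mainLpversion} and (\ref{uomegasobolevbound}) give the uniform $H^s_{ul}$ bound with no continuation argument; the pressure identity is needed only for the time-equicontinuity of $u_n$ and $\nabla p_n$ and for identifying the limiting pressure, where the far-field term is handled by a pointwise weak-$*$ argument rather than norm continuity alone.
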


To prove the theorem, we construct an approximation sequence of smooth, decaying solutions to ($E$), and we pass to the limit in ($E$).  The construction of the sequence of initial velocities is slightly more tedious in the three-dimensional setting than in two dimensions, as we must make use of a more complicated explicit formula for a three-dimensional stream function.

Because we are seeking a strong solution to ($E$) in $H^s_{ul}$, we are forced to consider the meaning of the pressure for such solutions.  We are able to make sense of the pressure by passing to a certain limit of the sequence of smooth pressures generated from the smooth velocity solutions.       

To prepare the initial velocity, we adapt the classical strategy employed in \cite{AKLN} and \cite{Cozzi} of cutting off and smoothing the stream function associated with the initial velocity $u^0$. Some additional care is required because of the lack of inherent decay of the velocity field.

\begin{lemma}\label{unifbound}
Let $u^0\in H^s_{ul}(\R^3)$ and let $s$ be a nonnegative integer. There exists a sequence $(u^0_n)$ of Schwarz-class, divergence-free vector fields uniformly bounded in $H^s_{ul}(\R^3)$ for which $\phi_R u_n^0 \to \phi_R u^0$ in ${H^s(\R^3)}$ for any fixed $R > 0$.
\end{lemma}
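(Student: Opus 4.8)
The plan is to run the classical cutoff-and-mollify construction of \cite{AKLN,Cozzi}, but applied to a \emph{stream function} rather than to $u^0$ itself, so that the divergence-free constraint is preserved automatically. Since $\dv u^0 = 0$, Lemma~\ref{L:Stream} produces $\psi \in H^{s+1}_{ul}(\R^3)$ with $\curl \psi = u^0$, together with the localized bound $\norm{\psi}_{H^s(U)} \le C R \norm{u^0}_{H^s_{ul}(\R^3)}$ valid for every bounded convex $U \subseteq B_R(0)$, with $C$ depending only on $\abs{U}$. I would fix a standard mollifier $\rho_\epsilon(y) = \epsilon^{-3} \rho(y/\epsilon)$, $\rho \in C^\infty_c(B_1(0))$, $\int \rho = 1$, recall the cutoff $a_\lambda(y) = a(y/\lambda)$ of Section~\ref{preliminary}, and, with $\epsilon_n = 1/n$, $\lambda_n = n$, set
\begin{equation*}
	\psi_n := a_{\lambda_n}\, (\psi * \rho_{\epsilon_n}), \qquad u^0_n := \curl \psi_n.
\end{equation*}
Each $\psi_n \in C^\infty_c(\R^3) \subset \mathcal{S}(\R^3)$, hence $u^0_n \in \mathcal{S}(\R^3)$, and $\dv u^0_n = \dv \curl \psi_n = 0$; it then remains only to prove the uniform $H^s_{ul}$ bound and the local convergence.

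\textbf{Uniform bound.}
Using $\curl(fv) = f \curl v + \grad f \times v$ together with $\curl(\psi * \rho_\epsilon) = (\curl \psi) * \rho_\epsilon = u^0 * \rho_\epsilon$, I would split
\begin{equation*}
	u^0_n = a_{\lambda_n}(u^0 * \rho_{\epsilon_n}) + \grad a_{\lambda_n} \times (\psi * \rho_{\epsilon_n}) =: I_n + II_n.
\end{equation*}
For $I_n$: mollification is bounded on $H^s_{ul}$ uniformly in $\epsilon \le 1$ (the Minkowski-inequality argument from the proof of Lemma~\ref{smoothcutoffbounded} applies verbatim) and $\norm{a_{\lambda_n}}_{\tilde{C}^s} \le C$ for $\lambda_n \ge 1$, so Lemma~\ref{commutator}(ii) (the case $s=0$ being elementary) gives $\norm{I_n}_{H^s_{ul}} \le C \norm{u^0}_{H^s_{ul}}$. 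For $II_n$ I would fix $x \in \R^3$; if $\supp \phi_x \cap \supp \grad a_{\lambda_n} = \emptyset$ then $\phi_x II_n = 0$, and otherwise $\abs{x} \le C \lambda_n$ since $\grad a_{\lambda_n}$ is supported in an annulus of radius $\sim \lambda_n$. By Lemma~\ref{commutator}(ii),
\begin{equation*}
	\norm{\phi_x II_n}_{H^s} \le C \norm{\phi_x \grad a_{\lambda_n}}_{\tilde{C}^s}\, \norm{\psi * \rho_{\epsilon_n}}_{H^s(B_2(x))}.
\end{equation*}
Here $\norm{\phi_x \grad a_{\lambda_n}}_{\tilde{C}^s} \le C \lambda_n^{-1}$ (every derivative of $\grad a_{\lambda_n}$ of order $\le s$ is $O(\lambda_n^{-1})$), while $\norm{\psi * \rho_{\epsilon_n}}_{H^s(B_2(x))} \le \norm{\psi}_{H^s(B_3(x))}$ (Minkowski again, $\epsilon_n \le 1$), and, crucially, applying the stream-function bound with $U = B_3(x) \subseteq B_{C\lambda_n}(0)$ — a set of the \emph{fixed} measure $\abs{B_3(0)}$ — gives $\norm{\psi}_{H^s(B_3(x))} \le C \lambda_n \norm{u^0}_{H^s_{ul}}$. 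The two powers of $\lambda_n$ cancel, so $\norm{\phi_x II_n}_{H^s} \le C \norm{u^0}_{H^s_{ul}}$ uniformly in $x$ and $n$, hence $\norm{u^0_n}_{H^s_{ul}} \le C \norm{u^0}_{H^s_{ul}}$.

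\textbf{Local convergence.}
For the local convergence I would fix $R > 0$ and take $n$ so large that $a_{\lambda_n} \equiv 1$ on $B_{2R}(0) \supseteq \supp \phi_R$ (so $\grad a_{\lambda_n} \equiv 0$ there as well); then $\phi_R u^0_n = \phi_R (u^0 * \rho_{\epsilon_n})$. Since $u^0 \in H^s(B_{2R+1}(0))$, standard mollification gives $u^0 * \rho_{\epsilon_n} \to u^0$ in $H^s(B_{2R}(0))$, and multiplying by the fixed $\phi_R \in C^\infty_c$ yields $\phi_R u^0_n \to \phi_R u^0$ in $H^s(\R^3)$, as required.

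\textbf{Main obstacle.}
Everything above except the uniform control of the term $II_n$ is routine cutoff/mollification bookkeeping. The hard part is that differentiating the cutoff gains a factor $\lambda_n^{-1}$, but $\psi$ is not globally bounded; it grows, roughly linearly with distance to the origin. The precise shape of Lemma~\ref{L:Stream} — that the $H^s$ norm of $\psi$ over a region of \emph{fixed} Lebesgue measure located at distance $R$ from the origin grows only like $R$ — is exactly what absorbs this growth against the $\lambda_n^{-1}$ gain, and getting that interplay right (having first arranged Lemma~\ref{L:Stream} so that its constant depends on $\abs{U}$ rather than on $R$) is the crux of the argument.
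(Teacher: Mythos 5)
Your proposal is correct and follows essentially the same route as the paper: produce the stream function $\psi$ from Lemma \ref{L:Stream}, cut off at scale $n$, and absorb the $O(n)$ growth of $\|\psi\|_{H^s}$ on unit-scale balls against the $O(n^{-1})$ gain from $\grad a_{\lambda_n}$, exactly as the paper does with $\grad\phi_n\times\psi$. The only (harmless) differences are cosmetic: you smooth with a mollifier applied to $\psi$ before cutting off, whereas the paper applies $S_{m_n}$ after cutting off and must then choose $m_n$ adaptively in $n$ for the local convergence, a step your ordering lets you skip since $a_{\lambda_n}\equiv 1$ on $\supp\phi_R$ for large $n$.
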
 
\begin{proof}
Let $(m_n)_{n = 1}^\iny$ be a sequence of positive integers that we will specify later. We define $u^0_n$ by
\begin{align}\label{curlprodrule}
	u^0_n &=
		S_{m_n} (\nabla\times (\phi_n \psi))
		= S_{m_n} (\phi_n u^0) + S_{m_n} (\nabla \phi_n \times \psi),
\end{align}
where $\psi$ is the stream function for $u^0$ given in Lemma \ref{L:Stream}.
Observe that $u^0_n$ is Schwarz-class and divergence-free.

Using Lemmas \ref{commutator} and \ref{smoothcutoffbounded},
$$
	\|S_{m_n}(\phi_n u^0)\|_{H^s_{ul}}
		\leq C\| \phi_n u^0\|_{H^s_{ul}}
		\leq C\| \phi_n \|_{C^s} \| u^0\|_{H^s_{ul}}
		\leq C\| u^0\|_{H^s_{ul}}.
$$
Again by Lemmas \ref{commutator} and \ref{smoothcutoffbounded},
\begin{align*}
	&\|S_{m_n} (\nabla \phi_n\times \psi)\|_{H^s_{ul}}
		\le C \|\nabla \phi_n\times \psi\|_{H^s_{ul}}
		= C \sup_{z \in \R^3} \|\phi_z \nabla \phi_n\times \psi\|_{H^s} \\
		&\qquad
		= C \sup_{z \in \R^3} \|\phi_z \nabla \phi_n\times \psi\|_{H^s(U)}
		\le C \sup_{z \in \R^3}
			\|\phi_z^{\frac{1}{2}} \nabla \phi_n\|_{\tilde{C}^s(U)}
			\|\phi_z^{\frac{1}{2}} \psi\|_{H^s(U)},
\end{align*}
where $U = B_{2n}(0) \cap B_2(z)$.
But by Lemma \ref{L:Stream},
$
	\norm{\psi}_{H^s(U)}
		\le C n \smallnorm{u^0}_{H^s_{ul}}
		= Cn
$
and hence
$\|\phi_z^{\frac{1}{2}} \psi\|_{H^s(U)} \le C n$.
Since $\grad \phi_n(\cdot) = n^{-1} \grad \phi(n^{-1} \cdot)$, we have $\|\phi_z^{\frac{1}{2}} \nabla \phi_n\|_{\tilde{C}^s(U)} \le C n^{-1}$. It follows that
\begin{align*}
	\norm{S_{m_n} (\nabla \phi_n \times \psi)}_{H^s_{ul}}
		\le \frac{C}{n} C n
		= C.
\end{align*}

This shows that $(u^0_n)$ is uniformly bounded in $H^s_{ul}(\R^3)$.

We now show that $\phi_R u_n^0 \to \phi_R u^0$ in ${H^s(\R^3)}$. Because $S_{m_n}$ commutes with $\nabla \times$,
\begin{align*}
	\phi_R (u^0_n - u^0)
		&= \phi_R \brac{S_{m_n} (\nabla \times (\phi_n \psi)) - \nabla \times \psi}
		= \phi_R \nabla \times (S_{m_n} (\phi_n \psi) - \psi).
\end{align*}
But,
\begin{align*}
	S_{m_n} (\phi_n \psi) - \psi
		&= S_{m_n} (\phi_n \psi) - \phi_n \psi + (\phi_n - 1) \psi.
\end{align*}
For $n > 2R$, $\phi_n - 1  = 0$, so $\phi_R \nabla \times ((\phi_n - 1) \psi) = 0$, leaving
\begin{align*}
	\phi_R (u^0_n - u^0)
		&= \phi_R \nabla \times \pr{S_{m_n} (\phi_n \psi) - \phi_n \psi}.
\end{align*}

It is now time to choose $m_n$. Because $\nabla\times (\phi_n \psi) \in H^{s}(\R^3)$, we know that
$
	\nabla \times(S_k (\phi_n \psi) - \phi_n \psi)
		\to 0 \text{ in } H^{s}(\R^3),
$
as $k \to \iny$, so choose $m_n \ge n$ sufficiently large that 
\begin{align*}
	\norm{\nabla \times(S_k (\phi_n \psi) - \phi_n \psi)}_{H^{s}(\R^3)} \le \frac{1}{n}
		\text{ for all } k \ge m_n.
\end{align*}
It follows that
\begin{align*}
	\norm{\phi_R (u^0_n - u^0)}_{H^s(\R^3)}
		&\le \frac{C(R)}{n}.
\end{align*}
This gives $\phi_R u_n^0 \to \phi_R u^0$ in ${H^s(\R^3)}$ for any $R > 0$.
\end{proof}

\begin{proof}[\textbf{Proof of Theorem \ref{EulerUnifLocalSobolev}}] $\phantom{x}$

\noindent\textbf{Uniform time of existence.}
From the sequence of initial velocities in Lemma \ref{unifbound}, we generate a sequence $( u_n)$ of solutions to ($E$) in $H^k(\R^3)$ for all $k$, where the time interval of existence in $H^k(\R^3)$ for each $u_n$ may vary with $n$. We claim, however, that there exists a single $T>0$ such that $u_n$ solves ($E$) with $(u_n)$ uniformly bounded in $L^\iny(0, T; H^s_{ul}(\R^3))$.

 To see that such a $T$ exists, note that Lemma \ref{SobolevEmbedding} gives an $\alpha > 1$ such that $u^0_n$ belongs to $C^{\alpha}(\R^3)$ for each $n$.  Thus, a solution $u_n$ will exist in $C^{\alpha}(\R^3)$ at least on $[0,T_n]$, with $u_n$ satisfying the estimate (see \cite{Chemin2} and chapter 4 of \cite{Chemin1})  
\begin{equation}\label{EHolderuniformbound}
\begin{split}
&\| u_n \|_{C([0,T_n]; C^{\alpha})} \leq \frac{\| u_n^0 \|_{C^{\alpha}}}{1-CT_n\| u_n^0 \|_{C^{\alpha}}}\leq \frac{\tilde{C}\| u_n^0 \|_{H^s_{ul}}}{1-\tilde{C}T_n\| u_n^0 \|_{H^s_{ul}}}.\\ %
\end{split}
\end{equation}    
Choose $T>0$ such that, for every $n$, $T\leq T_n$ and satisfies
\begin{equation*}
\frac{1}{2\tilde{C}} \leq T_n \| u_n^0 \|_{H^s_{ul}} \leq T \| u^0 \|_{H^s_{ul}} < \frac{1}{\tilde{C}},
\end{equation*}
where $\tilde{C}$ is as in (\ref{EHolderuniformbound}).  We have that for every $n$, $u_n$ is a solution to ($E$) in $C^{\alpha}(\R^3)$ on $[0,T]$.  Moreover, by (\ref{EHolderuniformbound}) and Lemma \ref{unifbound}, $\| u_n \|_{C([0,T]; C^{\alpha})}$ is uniformly bounded in $n$.  But this implies that for every $n$, $\| \nabla u_n \|_{L^{\infty}(0,T; L^{\infty}(\R^3)) } < \infty$.  From this and classical theory we can conclude that $u_n$ belongs to $C([0,T];H^k(\R^3))$ for every $k$.  Thus, for every $n$, $u_n$ satisfies Theorem \ref{mainLpversion} on $[0,T]$.  Theorem \ref{mainLpversion}, Lemma \ref{unifbound}, and another application of the uniform bound on $\| u_n \|_{C([0,T]; C^{\alpha})}$ imply that there exists a constant $C>0$, depending only on the initial data and $T$, such that for all $n$,
\begin{equation}\label{Eulervortunifbound}
\| \omega_n \|_{C([0,T]; H^{s-1}_{ul})} \leq C.
\end{equation}    
Moreover, by (\ref{uomegasobolevbound}), (\ref{Eulervortunifbound}), and (\ref{EHolderuniformbound}), there exists a constant $C>0$, depending only on the initial data and $T$, such that for all $n$,
\begin{equation}\label{Eulervelunifbound}
\| u_n \|_{C([0,T]; H^{s}_{ul})} \leq C.
\end{equation} 

\noindent\textbf{$\bm{(u_n)}$ converges to $\bm{u}$.}
Note that for each $n$, $u_n$ belongs to the space $C^1([0,T]; H^{s}(\R^3))$.  Moreover, we have 
\begin{equation}\label{Eulern}
\partial_t u_n + u_n \cdot \nabla u_n = -\nabla p_n,
\end{equation}
where $p_n$ satisfies $ p_n = \Delta^{-1}\nabla (u_n\cdot\nabla u_n)$. 
For fixed $R>0$, multiply (\ref{Eulern}) by $\phi_R$.  Then
\begin{equation*}
\| \phi_R \partial_t u_n \|_{H^{s-1}} \leq \| \phi_R( u_n\cdot\nabla u_n) \|_{H^{s-1}} + \| \phi_R\nabla p_n \|_{H^{s-1}}.
\end{equation*}  
Note that 
\begin{equation*}
\begin{split}
&\| \phi_R( u_n\cdot\nabla u_n) \|_{H^{s-1}} \leq C(R)\|  u_n\cdot\nabla u_n \|_{H^{s-1}_{ul}} \leq C(R)\| u_n \|_{L^{\infty}} \| \nabla u_n\|_{H^{s-1}_{ul}} \\
&\qquad +  C(R)\|\nabla u_n \|_{L^{\infty}} \|  u_n\|_{H^{s-1}_{ul}} \leq C(R) \| u_n \|^2_{H^s_{ul}},
\end{split}
\end{equation*}
which can be bounded uniformly in $n$ by (\ref{Eulervelunifbound}).

To estimate the pressure term, observe that
\begin{align}\label{smoothpidentity}
\begin{split}
&\nabla p_n = - (a\grad G)\ast \dv \dv (u_n \otimes u_n)  + [\nabla\nabla((1-a)\grad G)]\cdot\ast(u_n \otimes u_n).
\end{split}
\end{align}
Applying $D^{\gamma}$ to this identity with $1\leq |\gamma| \leq s-1$ and applying Lemma \ref{L:ConvHsBound},
\begin{equation*}
\| \nabla p_n \|_{H^{s-1}_{ul}} \leq C(\| u_n \otimes u_n \|_{H^s_{ul}} + \| u_n \otimes u_n \|_{L^{\infty}}) \leq C\| u_n \otimes u_n \|_{H^s_{ul}},
\end{equation*}
where we used the Sobolev embedding theorem.  Thus, 
\begin{equation}\label{pressureunifbound}
\| \phi_R\nabla p_n \|_{H^{s-1}} \leq C(R)\| \nabla p_n \|_{H^{s-1}_{ul}} \leq C\| u_n \|^2_{H^s_{ul}},
\end{equation}
which can be uniformly bounded in $n$.

Combining the above inequalities, we conclude that
\begin{equation}\label{continuityint}
\| \phi_R \partial_t u_n \|_{H^{s-1}} \leq C,
\end{equation}
with $C$ depending on the initial data and $R$, but not on $n$.  

By Rellich's Theorem and the uniform bounds on $\| u_n (t) \|_{H^{s}_{ul}}$ for each $t$, we can conclude that for each $t$ and each $R$, there exists a subsequence of $(\phi_Ru_n(t))$ which converges in $H^{s-1}(\R^3)$.  Using a standard diagonalization argument, for each fixed $t$, one can find a subsequence of $(\phi_Ru_n(t))$, relabelled $(\phi_Ru_n(t))$, which converges in $H^{s-1}(\R^3)$ for every $R$.

To find a single subsequence that works for all $t$, we use (\ref{continuityint}). Given $\epsilon>0$, there exists $\delta>0$ such that for all $n$,  
\begin{equation}\label{velunifctsSobolev}
\| \phi_R  u_n(s) - \phi_R  u_n(t) \|_{H^{s-1}} < \epsilon/3
\end{equation}          
whenever $|s-t|< \delta$.  Given this $\delta$, construct a partition of $[0,T]$, $0=t_0 < t_1 < t_2 < ...... < t_M = T$ such that $t_i - t_{i-1} <\delta$.  Using the process above, one can find a subsequence of $(\phi_R u_n)$, which we relabel $(\phi_R u_n)$, such that $(\phi_R u_n(t_i))$ converges, and hence is Cauchy in, $H^{s-1}$ for each $t_i$, $i=1,2,...,M$ and for every $R>0$.  

Let $N$ be such that for all $n,m\geq N$ and for all $t_i$ in the partition,
\begin{equation*}
\| \phi_R  u_n(t_i) - \phi_R  u_m(t_i) \|_{H^{s-1}} < \epsilon/3.
\end{equation*}        
Then for all pairs $m$, $n \geq N$ and for each $t\in[0,T]$, there exists $t_i$ such that   
\begin{equation*}
\begin{split}
&\|  \phi_R  u_n(t) -  \phi_R  u_m(t) \|_{H^{s-1}} \leq \| \phi_R u_n(t) - \phi_R u_n(t_i) \|_{H^{s-1}}\\
&\qquad   + \| \phi_R u_n(t_i) - \phi_R u_m(t_i) \|_{H^{s-1}}  + \| \phi_R u_m(t_i) - \phi_R u_m(t) \|_{H^{s-1}} < \epsilon. 
\end{split}
\end{equation*}
We conclude that $(\phi_R u_n)$ is Cauchy in $C([0,T]; H^{s-1}(\R^3))$, and thus there exists $u$ such that $(\phi_R u_n)$ converges to $\phi_R u$ in $C([0,T]; H^{s-1}(\R^3))$ for all $R>0$.  \\

\noindent\textbf{$\bm{(p_n)}$ converges to $\bm{p}$.}
We now show that, up to subsequences, for all $R>0$, $(\phi_R\nabla p_n)$ is Cauchy (and thus converges) in $C([0,T];H^{s-2}(\R^3))$.  The process is very similar to that above.  As above, using the uniform bound in (\ref{pressureunifbound}) and Rellich's Theorem, we can conclude that for each fixed $t$, there exists a subsequence of $(\phi_R\nabla p_n(t))$, relabel it $(\phi_R\nabla p_n(t))$, which converges in $H^{s-2}(\R^3)$ for every $R$.  To find a single subsequence that works for all $t$, we must find a time modulus of continuity for $(  \phi_R  \nabla p_n(t) )$ which is uniform in $n$.  To do this, first note that by Proposition \ref{P:PressureIdentity},
\begin{equation*}
\begin{split}
&\| \partial_t u_n \|_{L^{\infty}} \leq \| u_n \|_{L^{\infty}} \| \nabla u_n \|_{L^{\infty}} + \| \nabla p_n \|_{L^{\infty}} \\
&\qquad \leq C\| u_n \|^2_{\tilde{C}^1} \leq C\| u_n \|^2_{H^s_{ul}} \leq C 
\end{split}
\end{equation*}
for all $n$ and for all $t\in [0,T]$.  Thus there exists $C>0$ such that for all $s,t\in [0,T]$ and for all $n$,
\begin{equation}\label{velunifcontinuity}
\| u_n(t) - u_n (s) \|_{L^{\infty}} \leq C|t-s|. 
\end{equation}

Applying (\ref{smoothpidentity}) and Lemma \ref{L:ConvHsBound},
for $s,t\in[0,T]$,  
\begin{equation*}
\begin{split}
&\|  \phi_R  \nabla p_n(t) -  \phi_R  \nabla p_n(s)\|_{H^{s-2}} \leq C\| \phi_{4R} (u_n \otimes u_n(t) - u_n \otimes u_n(s) )\|_{H^{s-1}}\\
&\qquad + C\| u_n \otimes u_n(t) - u_n \otimes u_n(s) \|_{L^{\infty}}.
\end{split}
\end{equation*}     
It follows from uniform bounds on $\| u_n \|_{L^{\infty}}$ and $\| u_n \|_{H^{s-1}_{ul}}$ in $n$, along with (\ref{velunifcontinuity}) and (\ref{velunifctsSobolev}), that given $\epsilon>0$, there exists $\delta>0$ such that for all $n$, whenever $|s-t|<\delta$, 
\begin{equation*}
\|  \phi_R  \nabla p_n(t) -  \phi_R  \nabla p_n(s)\|_{H^{s-2}} < \epsilon.
\end{equation*}

With this uniform continuity in hand, we follow a process identical to that used to show for all $R>0$, $(\phi_R u_n)$ is Cauchy in $C([0,T];H^{s-1}(\R^3))$.  We conclude that for all $R>0$, $(\phi_R\nabla p_n)$ is Cauchy in $C([0,T];H^{s-2}(\R^3))$, and thus there exists $p$ such that $(\phi_R\nabla p_n)$ converges to $\phi_R \grad p$ in $C([0,T];H^{s-2}(\R^3))$. \\

\noindent\textbf{$\bm{(u, p)}$ solve $\bm{(E)}$}.
For fixed $R>0$, multiply (\ref{Eulern}) by $\phi_R$.  Then for any $m, n$, %
\begin{equation*}
\begin{split}
&\phi_R(\partial_t u_n - \partial_t u_m) = \phi_R(u_n - u_m)\cdot\nabla u_m + \phi_Ru_n\cdot\nabla(u_m - u_n) - \phi_R\nabla (p_n - p_m)\\
&= \phi_R(u_n - u_m)\phi_{2R}\cdot\nabla u_m + \phi_Ru_n\cdot\phi_{2R}\nabla(u_m - u_n) - \phi_R\nabla (p_n - p_m),
\end{split}
\end{equation*}  
so that, for each $t$,
\begin{equation*}
\begin{split}
&\| \phi_R(\partial_t u_n - \partial_t u_m)\|_{H^{s-2}} \leq \| \phi_R(u_n - u_m)\phi_{2R}\nabla u_m \|_{H^{s-2}} \\
&\qquad + \| \phi_Ru_n\cdot\phi_{2R}\nabla(u_m - u_n) \|_{H^{s-2}} + \|\phi_R\nabla (p_n - p_m)\|_{H^{s-2}}\\
&\qquad \leq \| \phi_R(u_n - u_m)\|_{L^{\infty}} \|\phi_{2R}\nabla u_m \|_{H^{s-2}} + \| \phi_R(u_n - u_m)\|_{H^{s-2}} \|\phi_{2R}\nabla u_m \|_{L^{\infty}}\\
&\qquad + \| \phi_Ru_n\|_{L^{\infty}} \|\phi_{2R}\nabla(u_m - u_n) \|_{H^{s-2}} + \| \phi_Ru_n\|_{H^{s-2}} \|\phi_{2R}\nabla(u_m - u_n) \|_{L^{\infty}} \\
&\qquad + \|\phi_R\nabla (p_n - p_m)\|_{H^{s-2}}.
\end{split}
\end{equation*}  
Note that $\|\phi_{2R}\nabla u_n \|_{H^{s-2}}$, $\|\phi_{2R}\nabla u_n \|_{L^{\infty}}$, $\| \phi_Ru_n\|_{L^{\infty}}$, and $\| \phi_Ru_n\|_{H^{s-2}}$ are uniformly bounded in $n$.  We conclude that as $N \to \iny$,
\begin{equation*} 
\begin{split}
& \sup_{m, n \ge N} \| \phi_R(u_n - u_m)\|_{L^{\infty}} \|\phi_{2R}\nabla u_m \|_{H^{s-2}} \rightarrow 0,\\
& \sup_{m, n \ge N} \| \phi_R(u_n - u_m)\|_{H^{s-2}} \|\phi_{2R}\nabla u_m \|_{L^{\infty}} \rightarrow 0,\\
& \sup_{m, n \ge N} \| \phi_Ru_n\|_{L^{\infty}} \|\phi_{2R}\nabla(u_m - u_n) \|_{H^{s-2}} \rightarrow 0, \\
& \sup_{m, n \ge N} \| \phi_Ru_n\|_{H^{s-2}} \|\phi_{2R}\nabla(u_m - u_n) \|_{L^{\infty}} \rightarrow 0, \\
& \sup_{m, n \ge N} \| \phi_R \nabla (p_n-p_m)\|_{H^{s-2}}\rightarrow 0.
\end{split}
\end{equation*}

From the estimates above, it follows that $(\phi_R\partial_t u_n)$ is Cauchy in $C([0,T];H^{s-2}(\R^3))$.  Since $\phi_R u_n\rightarrow \phi_R u$ in $C([0,T]\times \R^3)$, $\phi_R u_n\rightarrow \phi_R u$ in ${\mathcal{D'}}([0,T]\times\R^3)$, which means $\phi_R \partial_t u_n\rightarrow \phi_R \partial_t u$ in ${\mathcal{D'}}([0,T]\times\R^3)$.  Thus, by uniqueness of weak limits, $\phi_R \partial_t u_n\rightarrow \phi_R \partial_t u$ in $C([0,T];H^{s-2}(\R^3))$ for every $R$.  This, combined with convergence of $(\phi_R u_n)$ to $(\phi_R u)$ in $C([0,T];H^{s-1}(\R^3))$ for every $R$, allows us to conclude that for every $R>0$, 
\begin{equation*}
\begin{split}
&\phi_R\partial_t u_n \rightarrow \phi_R\partial_t u, \\
&\phi_Ru_n\cdot\nabla u_n \rightarrow \phi_Ru\cdot\nabla u
\end{split}
\end{equation*}
in $C([0,T];H^{s-2}(\R^3))$.

It remains to take the limit of $(\phi_R\nabla p_n)$ in $C([0,T];H^{s-2}(\R^3))$.  To do this, first note that by Proposition \ref{P:PressureIdentity}, for every $n$,   
\begin{equation*}
\begin{split}
\nabla p_n(t,x)  &= - \int_{\R^3} a(x - y)  \nabla G (x - y)
				        \dv \dv (u_n \otimes u_n)(t,y) \, dy \\
			    &\qquad
	 			    + \int_{\R^3} (u_n \otimes u_n)(t,y) \cdot
					    \grad \grad \brac{(1 - a(x - y)) \nabla G(x - y)}
					    \, dy.
	    \end{split}
\end{equation*} 
Since $( u_n\otimes u_n )$ is uniformly bounded in $C([0,T]; L^{\infty}(\R^3))$, for each $t\in[0,T]$, there exists a subsequence $(u_{n_k} (t)\otimes u_{n_k}(t))$ converging weak-* in $L^{\infty}(\R^3)$.  Since $(\phi_R u_n)$ converges to $\phi_R u$ in $C([0,T];H^{s-1}(\R^3))$ for each $R$, $(\phi_R u_n\otimes \phi_R u_n)$ converges to $\phi_R u\otimes \phi_R u$ in $C([0,T];H^{s-1}(\R^3))$ for each $R$.  It follows from uniqueness of weak limits that for this fixed $t\in[0,T]$, $( u_{n_k}(t)\otimes u_{n_k}(t) )$ converges weak-* in $L^{\infty}$ to $u(t)\otimes u(t)$.  Since, for each $x\in\R^3$, $\grad \grad \brac{(1 - a(x - y)) \nabla G(x - y)}$ is in $L_y^1(\R^3)$,  
\begin{equation*}
\begin{split}
&\int_{\R^3} (u_{n_k} \otimes u_{n_k})(t,y) \cdot
					    \grad \grad \brac{(1 - a(x - y)) \nabla G(x - y)}
					    \, dy\\
&\qquad \rightarrow \int_{\R^3} (u \otimes u)(t,y) \cdot
					    \grad \grad \brac{(1 - a(x - y)) \nabla G(x - y)}
					    \, dy
\end{split}
\end{equation*}
for each $x\in\R^3$.

Similarly, since $(\dv \dv (u_n \otimes u_n) ) = ( \nabla u_n\cdot (\nabla u_n )^T)$ is uniformly bounded in the space $C([0,T]; L^{\infty}(\R^3))$, for each $t\in[0,T]$, there exists a subsequence $(\dv \dv (u_{n_k}(t) \otimes u_{n_k}(t)) )$ converging weak-* in $L^{\infty}(\R^3)$.  But again, since $(\phi_R u_n\otimes \phi_R u_n)$ converges to $\phi_R u\otimes \phi_R u$ in $C([0,T];H^{s-1}(\R^3))$ for each $R$, for this fixed $t\in[0,T]$, $(\dv \dv( u_{n_k}\otimes u_{n_k}))$ converges to $\dv \dv( u\otimes  u)$ in $\mathcal{D}'(\R^3)$.  By uniqueness of weak limits, the weak-* limit of $(\dv \dv (u_{n_k}(t) \otimes u_{n_k}(t)) )$ must be $(\dv \dv (u(t) \otimes u(t)) )$.  Since, for each $x\in\R^3$, $a(x - y)\nabla G (x - y)$ is in $L_y^1(\R^3)$,  
\begin{equation*}
\begin{split}
&\int_{\R^3} a(x - y)  \nabla G (x - y)
				        \dv \dv (u_{n_k} \otimes u_{n_k})(t,y) \, dy\\
				        &\qquad  \rightarrow \int_{\R^3} a(x - y)  \nabla G (x - y)
				        \dv \dv (u \otimes u)(t,y) \, dy
\end{split}
\end{equation*}
for each $x\in\R^3$. 

We conclude that for each $t\in[0,T]$, there exists a subsequence $(\nabla p_{n_k})$ such that $\nabla p_{n_k}(t,x) \rightarrow \nabla p(t,x)$ for every $x\in\R^3$, where
\begin{equation}\label{pressureforsolution}
\begin{split}
\nabla p(t,x)  &= - \int_{\R^3} a(x - y)  \nabla G (x - y)
				        \dv \dv (u \otimes u)(t, y) \, dy \\
			    &\qquad
	 			    + \int_{\R^3} (u \otimes u)(t, y) \cdot
					    \grad \grad \brac{(1 - a(x - y)) \nabla G(x - y)}
					    \, dy.
\end{split}
\end{equation}

Finally, since $(\phi_R \nabla p_n)$ converges in $C([0,T]; H^{s-2}(\R^3))$, by the above it must converge to $\phi_R \nabla p$ in $C([0,T]; H^{s-2}(\R^3))$.  Thus, $(u,p)$ solves ($E$), where $p$ satisfies (\ref{pressureforsolution}). \\

\noindent\textbf{$\bm{u}$ belongs to $\bm{L^{\infty}(0,T; H^s_{ul}(\R^3))}$}.
By (\ref{Eulervelunifbound}), for every $x\in\R^2$, $n\in\N$, and $t\in[0,T]$, $$\| \phi_x u_n(t) \|_{H^s} \leq C.$$  Therefore, up to a subsequence which depends on $t$ and $x$, $\phi_x u_n(t)$ converges weak-* in $H^s(\R^2)$.  Note, however, that for every $R>0$ and $t\in[0,T]$, $\phi_Ru_n(t)\rightarrow \phi_Ru(t)$ in $H^{s-1}(\R^2)$.  Given $x$, since we can always choose $R$ large enough to ensure that $\phi_x = \phi_x\phi_R$, we have $\phi_xu_n(t)\rightarrow \phi_xu(t)$ in $L^2(\R^2)$.  By uniqueness of limits, $\phi_x u_n(t)$ converges weak-* in $H^s(\R^2)$ to $\phi_x u(t)$, and 
$$ \| \phi_x u(t) \|_{H^s} \leq C.$$
This holds for all $t\in [0,T]$ and for all $x\in\R^2$, so $u$ belongs to $L^{\infty}(0,T; H^s_{ul}(\R^2))$. \\

\noindent \textbf{Uniqueness} Applying a cutoff function $\phi_R$ to two solutions and making the same estimates that showed $(u, p)$ solve $(E)$ yields uniqueness. Moreover, uniqueness also follows from \cite{TTY2010} or from \cite{CozziKelliher}. 
\end{proof}

\appendix
\section{A constitutive relation for ($SQG$)}\label{SQGSerfatiDerivation}

\begin{lemma}\label{L:SerfatiSQG}
	Assume that $(u, \theta)$ are smooth solutions on $[0, T] \times \R^2$, with $\theta$
	compactly supported in space, to 
	\begin{align*}
		\begin{cases}
			\prt_t \theta + u \cdot \grad \theta = 0, \\
			u(t) = \grad^\perp (\Delta)^{-\frac{1}{2}} \theta(t), \\
			(u, \theta)|_{t = 0} = (u^0, \theta^0).
		\end{cases}
	\end{align*}
	Then for all $t \in [0, T]$ and any $\lambda > 0$, we have the Serfati-type identity,
	\begin{align*}
		u(t)
			&= u^0
				+ (a_\lambda \Phi) * \grad^\perp (\theta(t) - \theta^0)
					- \int_0^t (\grad \grad^\perp ((1 - a_\lambda) \Phi) \stardot (\theta u(s))
						\, ds.
	\end{align*}
	In indices, this is
	\begin{align*}
		u^i(t)
			&= (u^0)^i + (a\Phi) * (\nabla^{\perp}(\theta(t) - \theta^0))^i
				- \int_0^t \prt_j (\grad^\perp((1-a)\Phi))^i * (\theta u^j(s)) \, ds.
	\end{align*}
\end{lemma}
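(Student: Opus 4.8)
The plan is to work directly from the constitutive law, using that $(-\Delta)^{-1/2}$ acts as convolution with $\Phi$, and to convert the far-field part of the Biot--Savart-type convolution into a time integral by means of the transport equation together with $\dv u = 0$. First I would subtract the constitutive law at time $0$ from the one at time $t$, obtaining $u(t) - u^0 = \nabla^\perp\bigl(\Phi \ast (\theta(t) - \theta^0)\bigr)$. Because $\theta$ is compactly supported in space (uniformly for $s \in [0,t]$, which follows from smoothness of the solution via the flow map), each difference $\theta(t) - \theta^0$, and below each product $\theta u(s)$, is a smooth compactly supported function, so every convolution appearing is that of a locally integrable kernel against such a function, and all differentiations under the integral sign and interchanges of integrals are legitimate.

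Next I would split $\Phi = a_\lambda \Phi + (1 - a_\lambda)\Phi$. For the near-field piece, moving the derivative onto the smooth compactly supported factor gives $\nabla^\perp\bigl((a_\lambda\Phi) \ast (\theta(t) - \theta^0)\bigr) = (a_\lambda\Phi) \ast \nabla^\perp(\theta(t) - \theta^0)$; since $a_\lambda\Phi \in L^1(\R^2)$ by \eqref{e:GPhiBounds}, no principal value is needed here, and this is already the second term of the asserted identity. It thus remains to identify the far-field piece $\nabla^\perp\bigl(((1 - a_\lambda)\Phi)\ast(\theta(t) - \theta^0)\bigr)$ with the time integral. Here I would invoke the equation: since $u = \nabla^\perp(\,\cdot\,)$ is divergence-free, the transport equation yields $\partial_s\theta(s) = -u(s)\cdot\nabla\theta(s) = -\dv(\theta u(s))$, so $\theta(t) - \theta^0 = -\int_0^t \dv(\theta u(s))\,ds$ as an identity of compactly supported functions. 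Substituting this, and interchanging $\nabla^\perp$, the convolution with $(1 - a_\lambda)\Phi$, and $\int_0^t$, the far-field piece becomes $-\int_0^t \bigl(\nabla^\perp((1 - a_\lambda)\Phi)\bigr)\ast \dv(\theta u(s))\,ds$.

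The last step is an integration by parts in the spatial variable inside each convolution. Writing $\Psi = (1 - a_\lambda)\Phi$ and $V = \theta u(s)$ and using that $V$ is compactly supported, the $i$-th component is
\begin{align*}
\bigl[(\nabla^\perp\Psi)\ast\dv V\bigr]^i(x)
	&= \int_{\R^2} (\nabla^\perp)^i\Psi(x - y)\,\partial_{y_j}V^j(y)\,dy \\
	&= \int_{\R^2} \partial_j(\nabla^\perp)^i\Psi(x - y)\,V^j(y)\,dy,
\end{align*}
with no boundary term, and with $\partial_j(\nabla^\perp)^i\Psi$ smooth (in fact $L^1$ by \eqref{e:GPhiBounds}). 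This is exactly $\partial_j(\nabla^\perp((1 - a_\lambda)\Phi))^i \ast (\theta u^j(s))$, so reassembling the time integral produces the index form of the identity, hence (by Definition \ref{D:stardot}) the displayed vector form, for every $\lambda > 0$.

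This is more a careful bookkeeping computation than a deep one, and I do not expect a serious obstacle; the points needing attention are (i) keeping the differing integrabilities of $\Phi$ (only locally integrable), of $\nabla\Phi$, and of $\nabla\nabla^\perp\Phi$ straight so that each convolution genuinely makes sense --- which is precisely why the cutoff $a_\lambda$ is inserted and why $\theta$ must be compactly supported --- and (ii) getting the signs and the placement of $\nabla$ versus $\nabla^\perp$ correct through the integration by parts and the repeated-index contractions. In the later uses of this lemma the compact-support hypothesis is removed by an approximation argument, which is why the statement is phrased only for spatially compactly supported $\theta$.
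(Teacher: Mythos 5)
Your proposal is correct and follows essentially the same route as the paper's proof: write the constitutive law as $u(t)=\grad^\perp(\Phi*\theta(t))$ using the compact support of $\theta$, split $\Phi$ with the cutoff $a_\lambda$, use $\prt_t\theta=-\dv(\theta u)$, and move the divergence onto the far-field kernel to produce $\grad\grad^\perp((1-a_\lambda)\Phi)$. The only (immaterial) difference is that you subtract the law at times $t$ and $0$ and insert the time-integrated transport equation, whereas the paper differentiates $u$ in time and integrates at the end.
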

\begin{proof}
	Because $\theta$ is compactly supported in space, we can write the constitutive law in the form
	$
		u(t) = \grad^\perp (\Phi * \theta(t)).
	$
	Taking the time derivative, we can introduce the cutoff function to obtain
	\begin{align*}
		\prt_t u(t)
			&= \grad^\perp (\Phi * \prt_t \theta(t))
			= \grad^\perp ((a_\lambda \Phi) * \prt_t \theta(t))
				+ \grad^\perp (((1 - a_\lambda) \Phi) * \prt_t \theta(t)) \\
			&= \prt_t  ((a_\lambda \Phi) * \grad^\perp \theta(t))
				- \grad^\perp (((1 - a_\lambda) \Phi) * (u \cdot \grad \theta)(t)).
	\end{align*}
	But $u \cdot \grad \theta = \dv (\theta u)$, so
	\begin{align*}
		&\brac{\grad^\perp (((1 - a_\lambda) \Phi) * (u \cdot \grad \theta)(t))}^i
			= \brac{\grad^\perp (((1 - a_\lambda) \Phi)}^i * (\dv (\theta u)(t)) \\
			&\qquad
			= \grad \brac{\grad^\perp (((1 - a_\lambda) \Phi)}^i \stardot (\theta u)(t).
	\end{align*}
	Integrating in time completes the proof.
\end{proof}

\begin{lemma}\label{BMOSerfati}
Assume the sequences ($u_n$) and ($\theta_n$) are generated as in (\ref{iteration}) and (\ref{seqapprox}).  For every $j\in\Z$, $n\in\N$, and $t\in [0,T]$, 
\begin{equation*}
\dot{\Delta}_j u^n (t) = \dot{\Delta}_j \nabla^{\perp}(-\Delta)^{-1/2} \theta^n (t),
\end{equation*}
equality holding almost everywhere on $\R^2$.
\end{lemma}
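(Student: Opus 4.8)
The plan is to argue by induction on $n$, carrying out at each stage a Littlewood--Paley--localized version of the derivation of Lemma \ref{L:SerfatiSQG}: because $\theta^n$ and $u^n$ do not decay, we must arrange that no convolution against a non‑integrable kernel is ever formed, and applying $\dot{\Delta}_j$ first is what makes every convolution legitimate. Throughout I will use that each $\theta^n$, $u^n$ is smooth and bounded together with all its derivatives---this is inherited through the iteration from the band‑limited data $S_{n+2}\theta^0$, $S_{n+2}u^0$ via standard linear transport theory---so the manipulations below (differentiation under the integral, moving derivatives across convolutions, Fubini) are classical; and that $\dot{\Delta}_j$, $S_n$ and $\nabla^{\perp}(-\Delta)^{-1/2}$ are Fourier multipliers which, restricted to the relevant frequency annuli, act as convolution against Schwartz kernels (cf. (\ref{LPRieszTransform})), so they commute with each other and with time integration.

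For the base case $n=1$, the hypothesis $u^0 = \nabla^{\perp}(-\Delta)^{-1/2}\theta^0$ in $\dot{C}^r(\R^2)$ says precisely that $\dot{\Delta}_j u^0 = \dot{\Delta}_j\nabla^{\perp}(-\Delta)^{-1/2}\theta^0$ a.e.\ for every $j\in\Z$; applying $S_2$ and commuting it past $\dot{\Delta}_j$ and $\nabla^{\perp}(-\Delta)^{-1/2}$ gives $\dot{\Delta}_j u^1 = \dot{\Delta}_j\nabla^{\perp}(-\Delta)^{-1/2}\theta^1$. The same computation with $S_{n+2}$ in place of $S_2$ shows that at every stage the initial iterate satisfies $\dot{\Delta}_j u^{n+1}(0) = \dot{\Delta}_j\nabla^{\perp}(-\Delta)^{-1/2}\theta^{n+1}(0)$; this will be used below.

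For the inductive step, assume $\dot{\Delta}_j u^n(t) = \dot{\Delta}_j\nabla^{\perp}(-\Delta)^{-1/2}\theta^n(t)$ for all $j$ and $t$. Applying $\dv$ and using $\dv\nabla^{\perp}=0$ gives $\dot{\Delta}_j\dv u^n \equiv 0$ for all $j$, so $u^n$ is divergence‑free; hence $(\ref{seqapprox})_1$ becomes $\partial_t\theta^{n+1} = -u^n\cdot\nabla\theta^{n+1} = -\dv(\theta^{n+1}u^n)$, and integrating, $\theta^{n+1}(t)-\theta^{n+1}(0) = -\int_0^t \dv(\theta^{n+1}u^n)(s)\,ds$. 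Now apply $\dot{\Delta}_j\nabla^{\perp}(-\Delta)^{-1/2}$ to this identity: for a vector field $h$ one has $\dot{\Delta}_j\nabla^{\perp}(-\Delta)^{-1/2}\dv h = \sum_k [\dot{\Delta}_j(\nabla^{\perp}\partial_k\Phi)] * h^k$, the kernel $\dot{\Delta}_j(\nabla^{\perp}\partial_k\Phi)$ being Schwartz, and split $\nabla^{\perp}\partial_k\Phi = \nabla^{\perp}\partial_k(a\Phi) + \nabla^{\perp}\partial_k((1-a)\Phi)$. The near‑field kernel $\nabla^{\perp}\partial_k(a\Phi)$ is a second derivative of a compactly supported distribution, so by Remark \ref{R:Convolutions} the derivatives transfer onto $h=\theta^{n+1}u^n$; using $\dv(\theta^{n+1}u^n) = -\partial_t\theta^{n+1}$ once more, the near‑field part of $-\int_0^t \dot{\Delta}_j\nabla^{\perp}(-\Delta)^{-1/2}\dv(\theta^{n+1}u^n)(s)\,ds$ integrates back to $\dot{\Delta}_j[(a\Phi)*\nabla^{\perp}(\theta^{n+1}(t)-\theta^{n+1}(0))]$. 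The far‑field kernel $\nabla^{\perp}\partial_k((1-a)\Phi)$ is smooth and $O(|x|^{-3})$ at infinity, hence in $L^1(\R^2)$ by (\ref{e:GPhiBounds}), so it may be convolved directly with the bounded field $\theta^{n+1}u^n$, contributing exactly $-\dot{\Delta}_j\int_0^t (\nabla\nabla^{\perp}((1-a)\Phi)) \stardot (\theta^{n+1}u^n)(s)\,ds$. Adding the two pieces, and then adding $\dot{\Delta}_j u^{n+1}(0) = \dot{\Delta}_j\nabla^{\perp}(-\Delta)^{-1/2}\theta^{n+1}(0)$ to both sides, reproduces exactly $\dot{\Delta}_j$ applied to the right‑hand side of $(\ref{seqapprox})_3$; therefore $\dot{\Delta}_j u^{n+1}(t) = \dot{\Delta}_j\nabla^{\perp}(-\Delta)^{-1/2}\theta^{n+1}(t)$, closing the induction.

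The main obstacle is purely one of bookkeeping, not of analysis: neither $(1-a)\Phi$ nor $\nabla((1-a)\Phi)$ is integrable on $\R^2$ and $\theta^{n+1}$, $u^n$ do not decay, so one cannot split $\nabla^{\perp}(-\Delta)^{-1/2}\theta^{n+1}$ naively. The device that rescues the argument---already the point of Lemma \ref{L:SerfatiSQG}---is that the transport equation expresses $\theta^{n+1}(t)-\theta^{n+1}(0)$ as a time integral of a divergence, supplying one derivative, while $\nabla^{\perp}(-\Delta)^{-1/2}$ supplies a second through the kernel $\nabla^{\perp}\nabla\Phi$; together these land the far‑field kernel in $L^1$. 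A secondary point, which must be checked but is benign, is that $u^n$ is genuinely divergence‑free and not merely so modulo the polynomial ambiguity in interpreting $u^0 = \nabla^{\perp}(-\Delta)^{-1/2}\theta^0$ in $\dot{C}^r$; since $\dv$ annihilates both correction terms of $(\ref{seqapprox})_3$, the divergence‑free property is inherited from $u^0$ through the iteration.
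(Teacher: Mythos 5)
Your proof is correct, but it takes a genuinely different route from the paper's. The paper does not induct on $n$ and does not redo the near/far-field splitting in physical space: it applies $\partial_t$ to $(\ref{seqapprox})_3$, takes Fourier transforms, uses the transport equation in the form $i\xi\cdot\mathcal{F}(u^{n-1}\theta^{n})=-\mathcal{F}(\partial_t\theta^{n})$, and then observes that the two multipliers recombine exactly, since $\hat{\varphi}_j(\mathcal{F}(1-a)\ast\hat{\Phi})=\hat{\varphi}_j\hat{\Phi}-\hat{\varphi}_j(\hat{a}\ast\hat{\Phi})$, so that $\hat{\varphi}_j\mathcal{F}(\partial_t u^{n})=\hat{\varphi}_j(i\xi^{\perp})\hat{\Phi}\,\mathcal{F}(\partial_t\theta^{n})$; inverting the transform and integrating in time (using the identity at $t=0$ for the mollified data) finishes the proof. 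You instead stay entirely on the physical side: you write $\theta^{n+1}(t)-\theta^{n+1}(0)$ as a time integral of $-\dv(\theta^{n+1}u^{n})$, split $\dot{\Delta}_j\nabla^{\perp}\partial_k\Phi$ into its $a\Phi$ and $(1-a)\Phi$ pieces, and match these with the two correction terms of $(\ref{seqapprox})_3$ --- in effect running the derivation of Lemma \ref{L:SerfatiSQG} in reverse under a frequency localization, with induction supplying $\dv u^{n}=0$. What your version buys is that every convolution is manifestly a Schwartz or $L^1$ kernel against a bounded function, so you avoid the distributional products on the Fourier side (the $\hat{a}\ast\hat{\Phi}\in L^1+L^p$ discussion) that the paper must justify; what the paper's version buys is that the cancellation of the cutoff is seen in one line at the multiplier level, with no induction. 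Two small points to tighten: both arguments need the divergence-free property of the previous iterate (the paper uses it silently in $\mathcal{F}(\dv(\theta^{n}u^{n-1}))=\mathcal{F}(u^{n-1}\cdot\nabla\theta^{n})$); your first derivation from the inductive hypothesis only shows $\dv u^{n}$ is a polynomial, as you acknowledge, and your fallback (the divergence of both correction terms in $(\ref{seqapprox})_3$ vanishes) still requires $\dv u^{0}=0$, which does follow from $\dot{\Delta}_j\dv u^{0}=0$ for all $j$ together with boundedness of $u^{0}$ (a bounded polynomial is a constant, and the constant vanishes upon averaging the divergence over large balls), but deserves a sentence.
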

\begin{proof}
Applying $\prt_t$ to $(\ref{seqapprox})_3$ gives, for every $j\in\Z$,
\begin{equation}\label{timeder}
\begin{split}
&\varphi_j\ast \partial_t u^{n}(t) =  \varphi_j\ast  ( (a\Phi)\ast\partial_t\nabla^{\perp}\theta^{n}(t) ) -  \varphi_j\ast  \left(  \nabla L \stardot (u^{n-1}\theta^{n})(t)\right),
\end{split}
\end{equation}
where $L = \nabla^\perp((1-a)\Phi)$, which we note has the singularity at the origin removed and which decays like $C \abs{x}^{-2}$ as $x \to \iny$. We apply the Fourier transform to both sides of (\ref{timeder}).  This gives
\begin{equation*}
\begin{split}
	&\hat{\varphi}_j \mathcal{F}(\partial_t u^{n})
		=  \hat{\varphi}_j \mathcal{F}(a\Phi)\mathcal{F}(\partial_t\nabla^{\perp}\theta^{n})
			- \hat{\varphi}_j  \mathcal{F} ( \nabla L ) \mathcal{F}(u^{n-1}\theta^{n}) \\
		&\qquad
		= \hat{\varphi}_j(\hat{a} \ast \hat{\Phi})(i\xi^\perp)\mathcal{F}(\partial_t\theta^{n})
			- \hat{\varphi}_j  (i\xi^\perp)
				(\mathcal{F}(1 - a)\ast \hat{\Phi})
				\brac{i\xi \cdot \mathcal{F}(u^{n-1}\theta^{n})} \\
		&\qquad
		= i \hat{\varphi}_j \xi^\perp
			\brac{(\hat{a} \ast \hat{\Phi}) \mathcal{F}(\partial_t\theta^{n})
			- (\mathcal{F}(1 - a)\ast \hat{\Phi})
				\brac{i\xi \cdot \mathcal{F}(u^{n-1}\theta^{n})}
			}.
\end{split}
\end{equation*}

But,
\begin{align*}
	i\xi \cdot \mathcal{F}(u^{n-1}\theta^{n})
		&= \mathcal{F}(\dv (\theta^n u^{n - 1}))
		= \mathcal{F}(u^{n - 1} \cdot \grad \theta^n)
		= -\mathcal{F}(\prt_t \theta^n),
\end{align*}
so
\begin{align*}
	\hat{\varphi}_j \mathcal{F}(\partial_t u^{n})
		&= i \mathcal{F}(\partial_t\theta^{n}) \hat{\varphi}_j \xi^\perp
			\brac{(\hat{a} \ast \hat{\Phi}) 
			+ (\mathcal{F}(1 - a)\ast \hat{\Phi})
			}.
\end{align*}

Note that $\hat{a}\in\mathcal{S}$, and $\hat{\Phi}$ decays like $\abs{\xi}^{-1}$, so that $\hat{a} \ast \hat{\Phi} = \hat{a}\ast(a \hat{\Phi} ) + \hat{a}\ast((1-a)\hat{\Phi})$ is in $L^1 + L^p$ for all $p>2$, by Young's inequality.  Moreover, observe that
\begin{equation}\label{highmodes}
	\hat{\varphi}_j (\mathcal{F}(1 - a)\ast \hat{\Phi})
		= \hat{\varphi}_j ((\delta - \hat{a}) \ast \hat{\Phi})
		= \hat{\varphi}_j  \hat{\Phi} - \hat{\varphi}_j  (\hat{a}\ast \hat{\Phi}).
\end{equation}
Since $\hat{\varphi}_j  \hat{\Phi} \in \mathcal{S}$, we have that $\hat{\varphi}_j (\mathcal{F}(1 - a)\ast \hat{\Phi})$ belongs to $L^1 + L^p$ as well.  In particular, all three terms in (\ref{highmodes}) are defined almost everywhere as, then, are the products.  This allows us to write the following equality, which holds in the distributional sense:
\begin{equation}\label{Fourierconstitutivelaw}
\begin{split}
	&\hat{\varphi}_j \mathcal{F}(\partial_t u^{n})
		= \hat{\varphi}_j(i\xi^\perp )\hat{\Phi}\mathcal{F}(\partial_t\theta^{n}).
\end{split}
\end{equation}
Defining $G \in \mathcal{S}(\R^2)$ by $G = \mathcal{F}^{-1} [ \hat{\varphi_j}(i\xi^\perp) \hat{\Phi} ]$ and applying the inverse Fourier transform to (\ref{Fourierconstitutivelaw}) gives
\begin{equation}\label{convolutionconstitutivelaw}
{\varphi}_j \ast \partial_t u^{n} = G \ast \partial_t\theta^{n} \qquad \text{ in } \mathcal{S}'(\R^2).
\end{equation}
Since both sides of the equality in (\ref{convolutionconstitutivelaw}) are convolutions of Schwarz functions with bounded functions, both sides belong to $L^1_{loc}(\R^2)$.  Therefore, equality in (\ref{convolutionconstitutivelaw}) holds pointwise almost everywhere on $\R^2$.  Moreover, by (\ref{LPRieszTransform}), we can write
\begin{equation*}
\dot{\Delta}_j \partial_t u^{n}(t) = \dot{\Delta}_j\nabla^{\perp}(-\Delta)^{-1/2} \partial_t\theta^{n},
\end{equation*}
which also holds almost everywhere.
Integrating in time and using the identity $\dot{\Delta}_ju^0 = \dot{\Delta}_j\nabla^{\perp}(-\Delta)^{-1/2} \theta^0$ for all $j\in\Z$, we have that for all $t\geq 0$,
\begin{equation*}
\dot{\Delta}_j u^{n}(t) = \dot{\Delta}_j \nabla^{\perp}(-\Delta)^{-1/2} \theta^{n},
\end{equation*} 
proving the lemma.
\end{proof}

%
% Section
%
\section{Serfati identity for 3D Euler}

We establish the 3D version of the Serfati identity of \cite{Serfati}. The key point of this identity is not its precise form, but rather the order of the derivatives that appear on its near and far field terms.

\begin{lemma}\label{L:SertatiID3DEuler}
	Let
	\begin{align*}
		K(x) = \frac{x}{4 \pi \abs{x}^3},
	\end{align*}
	one form of the 3D Biot-Savart kernel.
	Any smooth solution to the 3D Euler equations with velocity $u$ and with vorticity
	$\omega$ compactly supported in space satisfies, for any $\lambda > 0$,
	the 3D Serfati identity,
	\begin{align*}
		u^k(t)
			= (u^0)^k
				&+ \int_{\R^3} (a_\lambda K)(x - y) \times \omega(t, y) \, dy \\
				&+ \int_{\R^3} \grad \grad ((1 - a_\lambda) K^k)(x - y) \stardot
					(u \otimes u) (t, y) \, dy \\
				&+ \int_{\R^3} \grad \dv ((1 - a_\lambda) K)(x - y) \stardot
					(u^k \, u) (t, y) \, dy.
	\end{align*}
\end{lemma}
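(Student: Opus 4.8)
The plan is to mimic the proof of Lemma~\ref{L:SerfatiSQG}: start from the constitutive law in convolution form, differentiate in time, split with the cutoff $a_\lambda$, integrate the far-field term by parts using the Euler equation to move two derivatives onto the kernel, and integrate in time. Since $\omega$ is compactly supported, the Biot--Savart law $u(t) = K * \omega(t)$ holds classically, and since $u$ decays (being a convolution of the decaying $K$ with a compactly supported $\omega$), all the integrations by parts below produce no boundary terms.

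First I would take $\prt_t$ of $u = K * \omega$ and insert the cutoff:
\begin{align*}
	\prt_t u(t)
		&= K * \prt_t \omega(t)
		= (a_\lambda K) * \prt_t \omega(t)
			+ ((1 - a_\lambda) K) * \prt_t \omega(t).
\end{align*}
The near-field term is already in the desired form after recognizing $(a_\lambda K) * \prt_t \omega = \prt_t \int_{\R^3} (a_\lambda K)(x - y) \times \omega(t,y)\, dy$ once one is careful with the cross-product convention in $K*\omega$ (writing $(K*\omega)^k = \eps^{k\ell m} K^\ell * \omega^m$, or equivalently $\int (a_\lambda K)(x-y)\times\omega(t,y)\,dy$). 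For the far-field term I would use the vorticity equation $\prt_t \omega = -u\cdot\grad\omega + \omega\cdot\grad u = -\dv(u\otimes\omega) + \dv(\omega\otimes u)$ (valid since $\dv u = \dv\omega = 0$), so that
\begin{align*}
	\brac{((1 - a_\lambda) K) * \prt_t \omega}^k
		= -\brac{((1 - a_\lambda) K) * \dv(u\otimes\omega)}^k
			+ \brac{((1 - a_\lambda) K) * \dv(\omega\otimes u)}^k,
\end{align*}
and move one derivative off each of $u\otimes\omega$ and $\omega\otimes u$ onto the kernel. This gives first-order-in-$K$ convolutions against $u\otimes\omega$ and $\omega\otimes u$. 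The remaining task is to convert these $\omega$-bilinear expressions into $u$-bilinear ones using $\omega = \curl u$, i.e. $\omega^m = \eps^{mpq}\prt_p u^q$, and integrate by parts once more in the $p$-variable to transfer that derivative onto the (already once-differentiated) kernel. Tracking the $\eps$-tensor contractions and the identity $\eps^{k\ell m}\eps^{mpq} = \delta^{kp}\delta^{\ell q} - \delta^{kq}\delta^{\ell p}$ will reorganize the two terms into exactly $\grad\grad((1-a_\lambda)K^k) \stardot (u\otimes u)$ and $\grad\dv((1-a_\lambda)K)\stardot(u^k\,u)$, after which integrating in time from $0$ to $t$ and using $u(0) = u^0$ finishes the proof.

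The main obstacle I expect is the bookkeeping in the second integration by parts: correctly combining the Levi-Civita symbols arising from $K*\omega$ and from $\omega=\curl u$, keeping track of which index the transferred derivative lands on, and verifying that the cross terms collapse precisely into the $\dv$ term $\grad\dv((1-a_\lambda)K)\stardot(u^k u)$ rather than producing extra terms. One should also double-check that every integration by parts is legitimate — this is where compact support of $\omega$ together with the pointwise decay $|u(x)|, |\grad u(x)| = O(|x|^{-2})$ and the smoothness of $(1-a_\lambda)K$ away from the origin (and its vanishing near the origin) are used to discard boundary terms and justify differentiating under the integral. The precise form of the identity is not important — as the remark preceding the lemma notes, only the derivative count on the near- and far-field pieces matters — so minor discrepancies in constants or index placement can be absorbed into the $\stardot$ notation.
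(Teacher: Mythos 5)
Your plan is correct in outline and shares the paper's skeleton---write $u(t)=K*\omega(t)$ (legitimate since $\omega$ is compactly supported), differentiate in time, split with $a_\lambda$, force two derivatives onto the far-field kernel, and integrate in time---but it handles the far-field term by a genuinely different route. The paper never produces $\omega$-bilinear terms at all: it writes $\prt_t\omega=\curl\prt_t u=-\curl\dv(u\otimes u)$ (the curl killing the pressure), and then one application of the cross-product integration-by-parts identity of Lemma \ref{L:IBPEuler} followed by one ordinary integration by parts yields the two far-field terms directly, with no Levi-Civita algebra. Your route---vorticity equation in divergence form, one integration by parts, then $\omega=\curl u$ and a second integration by parts with the double Levi-Civita identity---does close, but the bookkeeping you defer is genuinely the crux, and it is cleaner if you expand the two Levi-Civita contractions \emph{before} the second integration by parts: the terms then organize into the desired pieces (two derivatives on $(1-a_\lambda)K$ paired with $u\otimes u$ and with $u^k u$) plus two leftover terms in which a single derivative of the kernel is paired with $\grad\abs{u}^2$ (these are your ``cross terms''); those two leftovers cancel only after one further integration by parts, using equality of the mixed second partials of the kernel, not pointwise. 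So the plan is sound, and it buys independence from Lemma \ref{L:IBPEuler} at the cost of heavier index work---but a complete proof must actually exhibit that cancellation rather than assert that the contractions ``will reorganize'' into the stated form. Your closing caveat is apt: carried out carefully, the signs and the placement of the free index in the far-field terms you obtain need not literally match the printed statement, which is consistent with the remark preceding the lemma that only the derivative count on the near- and far-field pieces matters; and your justification of the integrations by parts (compact support of $\omega$, decay of $u=K*\omega$ and of the truncated kernel) is fine and matches what the paper implicitly uses.
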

\begin{proof}
	Because $\omega$ is compactly supported in space, we can write the constitutive law
	in the form (see, for instance, Proposition 2.16 of \cite{MB})
	\begin{align*}
		u(t, x)
			&= \int_{\R^3} K(x - y) \times \omega(t, x) \, dy.
	\end{align*}
	
	Proceeding as in the proof of (\ref{L:SerfatiSQG}), we have
	\begin{align*}
		\prt_t u(t)
			&= \diff{}{t} \int_{\R^3} (a_\lambda K)(x - y) \times \omega(t, y) \, dy
				+ \int_{\R^3} L(x - y) \times \prt_t \omega(t, y) \, dy,
	\end{align*}
	where $L = (1 - a_\lambda) K$.
	But $\prt_t \omega = \curl (\prt_t u) = - \curl (u \cdot \grad u) = - \curl \dv (u \otimes u)$.
	Hence, the $i^{th}$ component of the second integral above, using Lemma \ref{L:IBPEuler}, becomes
	\begin{align*}
		- &\brac{\int_{\R^3} L(x - y) \times \curl \dv (u \otimes u)(t, y) \, dy}^k \\
			&\quad
			= \int_{\R^3} \prt_{y_i} L^k(x - y) \cdot [\dv (u \otimes u)(t, y)]^i
				+ \prt_{y_i} L^i(x - y) [\dv (u \otimes u)(t, y)]^k \, dy \\
			&\quad
			= \int_{\R^3} \prt_{y_i} L^k(x - y) \prt_j (u^j \otimes u^i)(t, y)
				+ \prt_{y_i} L^i(x - y) \prt_j (u^j \otimes u^k)(t, y) \, dy \\
			&\quad
			= -\int_{\R^3} \prt_j \prt_i L^k(x - y) (u^j \otimes u^i)(t, y)
				+ \prt_j \prt_i L^i(x - y) (u^j \otimes u^k)(t, y) \, dy.
	\end{align*}
	Integrating in time yields the result.
\end{proof}

\begin{lemma}\label{L:IBPEuler}
	For $u, v$ smooth with $u v$ compactly supported,
	\begin{align*}
		\int_{\R^3} u \times \curl v
			&= \int_{\R^3} (- \grad u \cdot v + \dv u \, v)
			= \int_{\R^3} (- \prt_i u^k v^i + \prt_i u^i \, v^k) e_k.
	\end{align*}
\end{lemma}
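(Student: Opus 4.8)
The plan is to reduce the identity to a pointwise vector-calculus identity and then to discard integrals of total derivatives. First I would record that, for any smooth vector fields $u,v$ on $\R^3$,
\begin{equation*}
	(u \times \curl v)_k = u^i \partial_k v^i - u^i \partial_i v^k ,
\end{equation*}
which follows at once from the Levi-Civita contraction $\epsilon_{kij}\,\epsilon_{jlm} = \delta_{kl}\delta_{im} - \delta_{km}\delta_{il}$ applied to $(u \times \curl v)_k = \epsilon_{kij}\,u^i(\curl v)_j = \epsilon_{kij}\,\epsilon_{jlm}\,u^i\partial_l v^m$; equivalently it is the coordinate form of the classical expansion of $\grad(u\cdot v)$.

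Next I would rewrite the right-hand side as a total $k$-derivative plus a divergence plus the two desired terms:
\begin{equation*}
	(u \times \curl v)_k = \partial_k (u^i v^i) - \partial_i (u^i v^k) - (\partial_k u^i)\, v^i + (\partial_i u^i)\, v^k .
\end{equation*}
Because $uv$ is compactly supported, so is $u^i v^i$ (summed in $i$) and so is each $u^i v^k$; hence $\int_{\R^3}\partial_k(u^i v^i)\,dx = 0$ and $\int_{\R^3}\partial_i(u^i v^k)\,dx = 0$, the latter being the integral of the divergence of a compactly supported field. Integrating the displayed identity over $\R^3$ then yields
\begin{equation*}
	\int_{\R^3}(u \times \curl v)_k\,dx = \int_{\R^3}\bigl(-(\partial_k u^i)\,v^i + (\dv u)\,v^k\bigr)\,dx ,
\end{equation*}
which is precisely $\int_{\R^3}(-\grad u\cdot v + \dv u\,v)$ written componentwise.

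I do not expect any genuine difficulty: the only points needing attention are keeping the sign and index conventions for the cross product and the curl consistent through the $\epsilon$--$\delta$ computation, and observing that each cancellation of a boundary term is justified by the hypothesis that $uv$ — hence every product of a component of $u$ with a component of $v$ — has compact support.
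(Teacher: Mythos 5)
Your proof is correct and is essentially the paper's argument: the paper expands one component of $u \times \curl v$ explicitly from the determinant and integrates by parts (using $\prt_2 u^2 + \prt_3 u^3 = \dv u - \prt_1 u^1$), while you carry out the same computation for all components at once via the $\epsilon$--$\delta$ identity $(u \times \curl v)_k = u^i \prt_k v^i - u^i \prt_i v^k$ and discard the exact derivatives $\prt_k(u^i v^i)$ and $\prt_i(u^i v^k)$, which is legitimate since every product $u^i v^j$ is compactly supported. One remark: your final form $\int \bigl(-(\prt_k u^i) v^i + (\dv u)\, v^k\bigr)$ agrees with what the paper's own proof derives ($-\prt_1 u \cdot v + \dv u \, v^1$ in the first component), whereas the index expression printed in the lemma statement, $-\prt_i u^k v^i$, has the indices in the first term transposed relative to that proof; your version is the one consistent with reading $\grad u \cdot v$ componentwise as $(\grad u \cdot v)^k = \prt_k u \cdot v$.
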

\begin{proof}
	We have,
	\begin{align*}
		u \times \curl v
			&=
			\begin{vmatrix}
				\bf{i} & \bf{j} & \bf{k} \\
				u^1 & u^2 & u^3 \\
				\prt_2 v^3 - \prt_3 v^2 & \prt_3 v^1 - \prt_1 v^3 & \prt_1 v^2 - \prt_2 v^1
			\end{vmatrix}.
	\end{align*}
	Working only on the first component and integrating by parts, we have
	\begin{align*}
		\int_{\R^3} (u \times \curl v)^1
			&= \int_{\R^3} u^2 (\prt_1 v^2 - \prt_2 v^1) - u^3(\prt_3 v^1 - \prt_1 v^3) \\
			&= \int_{\R^3} (- \prt_1 u^2 v^2 + \prt_2 u^2 v^1 + \prt_3 u^3 v^1 - \prt_1 u^3 v^3).
	\end{align*}
	But $\prt_2 u^2 + \prt_3 u^3 = \dv u - \prt_1 u^1$, so
	\begin{align*}
		\int_{\R^3} (u \times \curl v)^1
			&= \int_{\R^3} (- \prt_1 u^2 v^2 + (\dv u - \prt_1 u^1) v^1 - \prt_1 u^3 v^3) \\
			&= \int_{\R^3} (- \prt_1 u \cdot v + \dv u \, v^1).
	\end{align*}
	Similar expressions for the other two terms give the result.
\end{proof}

\section{Pressure identity}

We derive in this appendix the pressure identity for solutions to the Euler equations, adapted from the 2D version due to Serfati \cite{Serfati}, as derived in \cite{Kelliher}.

We work throughout with a sufficiently smooth decaying solution, $(u, p)$, to the 3D Euler equations in all of $\R^3$. It is classical in that setting that
\begin{align}\label{e:pqnR2}
	p(t, x)
		&= - G*\dv \dv (u(t) \otimes u(t))(x),
\end{align}
where $G$ is the fundamental solution to the Laplacian on $\R^3$, defined in (\ref{e:G}).

\begin{prop}\label{P:PressureIdentity}
	Let $a$ be as in Section \ref{preliminary}.
    The identity,
    \begin{align}\label{e:gradpExpDup}
    \begin{split}
        \grad p(x)
			&= - \int_{\R^3} a(x - y) \grad G (x - y)
				        \dv \dv (u \otimes u)(y) \, dy \\
			    &\qquad
	 			    + \int_{\R^3} (u \otimes u)(y) \cdot
					    \grad \grad \brac{(1 - a(x - y)) \grad G(x - y)}
					    \, dy,
	    \end{split}
    \end{align}
    holds independently of the choice of cutoff function,
    and $\grad p \in L^\iny([0, T] \times \R^3)$ with
    \begin{align*}
		\norm{\grad p(t)}_{L^\iny}
			\le C \norm{u(t)}_{\tilde{C}^1}^2.
	\end{align*}
\end{prop}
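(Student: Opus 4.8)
The plan is to derive \eqref{e:gradpExpDup} by splitting the Newtonian potential $G$ into near-field and far-field pieces via the cutoff $a$, and then integrating by parts twice in the far-field term — exactly the Serfati-type maneuver already used in Lemmas \ref{L:SerfatiSQG} and \ref{L:SertatiID3DEuler}. First I would start from the classical identity \eqref{e:pqnR2}, apply $\grad_x$, and write $\grad G = a \grad G + (1-a)\grad G$ (with $a$ evaluated at $x - y$). The near-field term is already in the desired form: it is the convolution of the compactly supported, integrable kernel $a \grad G$ against $\dv\dv(u\otimes u)$, which is bounded since $\dv\dv(u \otimes u) = \grad u : (\grad u)^T$ lies in $L^\infty$ whenever $u \in \tilde C^1$. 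For the far-field term, I would move the two divergences off of $u \otimes u$ and onto the kernel: because $(1-a)\grad G$ is smooth (the singularity at the origin is removed) and decays like $|x-y|^{-2}$ with second derivatives decaying like $|x-y|^{-4}$, and because $u \otimes u$ is at worst bounded, the boundary terms at infinity vanish for the smooth decaying solution we are assuming, and each integration by parts is justified. This produces $\int (u\otimes u)(y) \cdot \grad\grad[(1-a(x-y))\grad G(x-y)]\,dy$, which is \eqref{e:gradpExpDup}.

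The second claim, independence of the choice of cutoff, I would obtain by observing that the difference of the right-hand side of \eqref{e:gradpExpDup} formed with two cutoffs $a_1, a_2$ is, after undoing the integrations by parts on the far-field pieces (legitimate since $a_1 - a_2$ is compactly supported), equal to $-[(a_1 - a_2)\grad G] * \dv\dv(u\otimes u) + [(a_2 - a_1)\grad G]*\dv\dv(u\otimes u) = 0$; alternatively, both expressions equal $\grad p$ by the derivation above, so they agree. I would state this briefly rather than belabor it.

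For the bound $\norm{\grad p(t)}_{L^\infty} \le C\norm{u(t)}_{\tilde C^1}^2$, I would estimate the two terms of \eqref{e:gradpExpDup} separately. The near-field term is bounded by $\norm{a \grad G}_{L^1(\R^3)} \norm{\dv\dv(u\otimes u)}_{L^\infty} \le C \norm{\grad u}_{L^\infty}^2 \le C\norm{u}_{\tilde C^1}^2$, using that $a\grad G$ is compactly supported with an integrable ($|x|^{-2}$) singularity. The far-field term is bounded by $\norm{u \otimes u}_{L^\infty} \norm{\grad\grad[(1-a)\grad G]}_{L^1(\R^3)} \le C\norm{u}_{L^\infty}^2$; here the kernel $\grad\grad[(1-a)\grad G]$ is smooth (no singularity, since $1-a$ vanishes near the origin) and decays like $|x|^{-4}$, hence is in $L^1(\R^3)$. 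Adding the two gives $C(\norm{\grad u}_{L^\infty}^2 + \norm{u}_{L^\infty}^2) \le C\norm{u}_{\tilde C^1}^2$, uniformly in $t \in [0,T]$.

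The main obstacle, such as it is, is bookkeeping rather than a genuine difficulty: one must confirm that every integration by parts over $\R^3$ has no boundary contribution, which rests on the standing assumption that $(u,p)$ is a smooth, sufficiently decaying solution together with the decay rates of $(1-a)\grad G$ and its derivatives; and one must check the two kernels land in $L^1(\R^3)$, which is where the roles of $a$ (removing the singularity for the far field, keeping compact support for the near field) are essential. Since the identity is meant to be applied in the non-decaying setting later, I would emphasize that the final expression \eqref{e:gradpExpDup} makes sense for merely bounded $u \otimes u$ and bounded $\dv\dv(u\otimes u)$, even though the derivation uses decay.
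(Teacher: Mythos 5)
Your proposal is correct and follows essentially the same route as the paper: split $\grad G$ with the cutoff $a$, keep the near-field term acting on $\dv\dv(u\otimes u)$, move both derivatives onto the far-field kernel by integration by parts, bound each piece by an $L^1$--$L^\infty$ pairing using that $a\grad G\in L^1$ and $\grad\grad[(1-a)\grad G]\in L^1$, and obtain cutoff-independence by undoing the integrations by parts. The only cosmetic difference is that the paper performs the second integration by parts via the vector identity of Lemma \ref{L:SimpleLemma} (exploiting $\dv u=0$), whereas you integrate $\dv\dv(u\otimes u)$ by parts twice directly; the two are equivalent.
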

\begin{proof}
Applying $\prt_i$ to (\ref{e:pqnR2}) gives
\begin{align*}
	\prt_i p(x)
		=  -\int_{\R^3} \prt_i G(x - y) \dv(u \cdot \grad u)(y) \, dy.
\end{align*}
Here, we suppress the time variable to streamline notation.
Applying a cutoff and integrating by parts,
\begin{align*}
	\prt_i p(x)
		&= - \int_{\R^3} a(x - y) \prt_i G(x - y) \dv(u \cdot \grad u)(y) \, dy \\
		&\qquad
			- \int_{\R^3} (1 - a(x - y)) \prt_i G(x - y)
			        \dv(u \cdot \grad u)(y) \, dy \\
		&= - \int_{\R^3} a(x - y) \prt_i G(x - y) \dv(u \cdot \grad u)(y) \, dy \\
		&\qquad
	 		+ \int_{\R^3} (u \cdot \grad u)(y)
			\cdot \grad \brac{(1 - a(x - y)) \prt_i G(x - y)} \, dy.
\end{align*}
Integrating as in Lemma \ref{L:SimpleLemma} gives
\begin{align*}
	\begin{split}
	\prt_i p(x)
		&= - \int_{\R^3} a(x - y) \prt_i G(x - y)
		    \dv(u \cdot \grad u)(y) \, dy \\
		&\qquad
	 		+ \int_{\R^3} (u(y) \cdot \grad_y)
			\grad_y \brac{(1 - a(x - y)) \prt_i G(x - y)}
			\cdot u(y) \, dy,
	\end{split}
\end{align*}
which we can write more succinctly as (\ref{e:gradpExpDup}).

We conclude, since $\dv (u \cdot \grad u) = \grad u \cdot (\grad u)^T$, that
\begin{align*} % \label{e:prtipnBound}
	\begin{split}
	\norm{\prt_i p}_{L^\iny}
		&\le \smallnorm{a \prt_i G}_{L^1}
			\norm{\grad u}_{L^{\infty}}^2
			+ \smallnorm{\grad \grad \brac{(1 - a) \prt_i G}}_{L^1}
				\norm{u}_{L^\iny}^2.
	\end{split}
\end{align*}
Here, we are using that (in any dimension), $\grad G$ is locally in $L^1$ and, away from its singularity, $\grad^3 G$ lies in $L^1$.
This gives the bound on $\grad p(t)$ in $L^\iny$.

That the expression in (\ref{e:gradpExpDup}) is independent of the choice of cutoff function $a$ can be seen by subtracting the expression for two different cutoffs then undoing the integrations by parts.
\end{proof}

We used the following lemma above.
\begin{lemma}\label{L:SimpleLemma}
	Let $V \in H^1(\R^3)$. Then
	\begin{align*} 
		\begin{split}
		    \int_{\R^3} (u \cdot \grad u) \cdot V
			&=  - \int_{\R^3} (u \cdot \grad V) \cdot u.
		\end{split}
	\end{align*}
\end{lemma}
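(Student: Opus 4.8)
The statement is the integration-by-parts identity $\int_{\R^3} (u \cdot \grad u) \cdot V = - \int_{\R^3} (u \cdot \grad V) \cdot u$, where $u$ is the smooth decaying solution fixed at the start of the appendix and $V \in H^1(\R^3)$. The plan is to write everything in indices and move the derivative off $u$ and onto $V$ and onto a second copy of $u$, exploiting that $\dv u = 0$. Explicitly, $\int_{\R^3} (u \cdot \grad u) \cdot V = \int_{\R^3} u^j (\prt_j u^i) V^i$. Integrating by parts in $y_j$ gives $-\int_{\R^3} \prt_j(u^j V^i) u^i = -\int_{\R^3} (\prt_j u^j) V^i u^i - \int_{\R^3} u^j V^i (\prt_j u^i)$. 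Wait — that reproduces the term we want to move only partially; instead integrate by parts so that the derivative lands on $V^i$ and on the outer $u^i$: $\int u^j (\prt_j u^i) V^i = -\int u^i \prt_j(u^j V^i) = -\int u^i (\prt_j u^j) V^i - \int u^i u^j \prt_j V^i$. Since $\dv u = \prt_j u^j = 0$, the first term vanishes, leaving $-\int u^i u^j \prt_j V^i = -\int_{\R^3} (u \cdot \grad V) \cdot u$, which is exactly the claim.

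The only point requiring care is the justification of the integration by parts — that the boundary terms vanish and the integrals converge. Here $u$ is the smooth, spatially decaying (indeed Schwartz-class, by the reduction made at the start of Appendix C / the setting of Proposition \ref{P:PressureIdentity}) solution, so $u^i u^j V^i$ decays rapidly enough at infinity; and $V \in H^1(\R^3)$ ensures $\prt_j V^i \in L^2$ while $u^i u^j \in L^2$ (being bounded and decaying), so each integrand is in $L^1$ and the divergence-theorem step is legitimate by a standard density/cutoff argument. I expect this routine justification to be the only mild obstacle; the algebra itself is a single integration by parts together with the divergence-free constraint.

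In the write-up I would present it compactly: state the index identity $(u \cdot \grad u) \cdot V = u^j (\prt_j u^i) V^i$, integrate by parts in the $j$-variable, discard the $\dv u$ term, and re-identify the result as $-(u \cdot \grad V) \cdot u$. No further lemmas from the paper are needed beyond the smoothness and decay already assumed of $u$.
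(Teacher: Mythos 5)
Your argument is correct and is essentially the paper's proof in index form: the paper first applies the vector identity $(u \cdot \grad u) \cdot V = u \cdot \grad (V \cdot u) - (u \cdot \grad V) \cdot u$ and then kills $\int u \cdot \grad (V \cdot u)$ using $\dv u = 0$, while you integrate by parts first and then invoke the product rule and $\dv u = 0$ — the same two ingredients in reverse order. Your remarks on decay of $u$ and $V \in H^1$ justifying the integration by parts are fine; just delete the false-start sentence ("Wait — ...") before writing it up.
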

\begin{proof}
	Using the vector identity, $(u \cdot \grad u) \cdot V
	    = u \cdot \grad (V \cdot u) - (u \cdot \grad V)
	    \cdot u$ gives
	\begin{align*}
		\begin{split}
		    \int_{\R^3} (u \cdot \grad u) \cdot V
			&= \int_{\R^3} u \cdot \grad (V \cdot u)
			    - \int_{\R^3} (u \cdot \grad V) \cdot u
			=  - \int_{\R^3} (u \cdot \grad V) \cdot u,
		\end{split}
	\end{align*}
	where the one integral vanishes since $\dv u = 0$.
\end{proof}

\section*{Acknowledgements}
\noindent DMA gratefully acknowledges support from the National Science Foundation through grant DMS-1907684.  EC 
gratefully acknowledges support by the Simons Foundation through Grant No. 429578.

\end{document}